\newtheorem{Theorem}{Theorem}[section]
\newtheorem{Lemma}[Theorem]{Lemma}
\newtheorem{Corollary}[Theorem]{Corollary}
\newtheorem{Proposition}[Theorem]{Proposition}
\newtheorem{Conjecture}[Theorem]{Conjecture}
\theoremstyle{definition}
\newtheorem{Definition}[Theorem]{Definition}
\theoremstyle{remark}
\newtheorem{Remark}[Theorem]{Remark}
\def\t{\tilde{\tau}}
\def\Sm{\mathbb{S}}
\def\Rm{ {\mathbb{R}} }
\def\Cm{ {\mathbb{C}} }
\def\Nm{ {\mathbb{N}} }
\def\Zm{ {\mathbb{Z}} }
\def\A{\mathcal{A}}
\def\D{\mathcal{D}}
\def\H{\mathcal{H}}
\def\V{\mathcal{V}}
\def\two{\mathrm{II}}
\newcommand{\calE}{\mathcal E}
\newcommand{\bt}{ {\boldsymbol{t}}}
\newcommand{\be}{ {\boldsymbol{e}}}
\newcommand{\adj}{ I_0^\sharp }
\newcommand{\wtB}{ \widetilde{B} }
\newcommand{\wtH}{ \widetilde{H} }
\newcommand{\hpi}{ \widehat{\pi} }
\newcommand{\cout}[1]{ }
\newcommand{\del}{\partial}
\newcommand{\phg}{\mathrm{phg}}
\newcommand{\F}[1]{{#1}}
\newcommand{\resp}[1]{{#1}}
\newcommand{\foot}{ {\pi_{\mathrm f}} }
\newcommand{\hF}{ \widehat{\foot} }
\newcommand {\dM}{\mathrm{d}M}
\title{Double b-fibrations and desingularization of the X-ray transform on manifolds with strictly convex boundary}
\author{Rafe Mazzeo\thanks{Department of Mathematics, Stanford University, Stanford CA 94305; email:rmazzeo@stanford.edu} \and Fran\c{c}ois Monard\thanks{Department of Mathematics, University of California, Santa Cruz CA 95064; email:fmonard@ucsc.edu}}
\date{}
\begin{document}
\maketitle

\begin{abstract}
We study the mapping properties of the X-ray transform and its adjoint on spaces of conormal functions on Riemannian 
manifolds with strictly convex boundary.  After desingularizing the double fibration, and expressing the X-ray transform and its adjoint using 
b-fibrations operations, \resp{we employ tools related to Melrose's Pushforward Theorem to describe the mapping properties of these operators 
on various classes of polyhomogeneous functions, with special focus to computing how leading order coefficients are transformed.  
The appendix explains that a na\"ive use of the Pushforward Theorem leads to a suboptimal result with non-sharp index sets. 
Our improved results are obtained by closely inspecting} Mellin functions which arise in the process, showing that certain coefficients vanish. 
This recovers some sharp results known by other methods. A number of consequences for the mapping properties of the X-ray transform and its 
normal operator(s) follow.
\end{abstract}

\tableofcontents

\section{Introduction}\label{sec:intro}
In this article we study the mapping properties of the geodesic X-ray transform and its adjoint on manifolds with convex boundary.

To set the stage and introduce the main objects of study, let $(M^d,g)$ be a Riemannian manifold with strictly convex boundary (in the sense of 
having positive definite second fundamental form). Denote by $SM\stackrel{\pi}{\longrightarrow} M$ its unit tangent bundle. The boundary of 
$SM$ splits into outward (-) and inward (+) boundaries:
\begin{align}
    \partial SM = \partial_+ SM \cup \partial_- SM, \qquad \partial_\pm SM := \{ (x,v)\in SM,\ x\in \partial M,\ \pm g_x(v,\nu_x)\ge 0 \}, 
    \label{eq:dSM}
\end{align}
with $\nu_x$ the unit {\it inward} normal at $x\in \partial M$.  Note that $\partial_+ SM$ is naturally diffeomorphic to
$\partial_- SM$ under the reflection involution $(x,v)\mapsto (x,-v)$, and furthermore that $\partial_+ SM \cap \partial_-SM$ is naturally identified
with $S\partial M$; this is called the glancing set.  We also denote by $\varphi_t \colon SM\to SM$ the geodesic flow. This is
defined for $(x,v)\in SM$ and $t\in [-\tau(x,-v),\tau(x,v)]$, where $\tau(x,v) \in [0,\infty]$ is the smallest time for which 
$\varphi_t(x,v) \in \partial SM$. Throughout, we will assume that $(M,g)$ is {\em non-trapping} in the sense that 
$\sup_{SM} \tau <\infty$. The footpoint map 
\begin{align}
    \foot\colon SM\ni (x,v) \mapsto \varphi_{-\tau(x,-v)} (x,v) \in \partial_+ SM,
    \label{eq:F}
\end{align}
assigns to any point and direction the initial (or 'final in backward time') point on the boundary and inward-pointing direction determined by the geodesic through $(x,v)$; 
this is continuous on $SM$ and smooth on $(SM)^{o}$ (Here and below the superscript $o$ denotes the interior of the set in question.). We then define the geodesic X-ray transform $I_0$ and the backprojection operator $\adj$: 
\begin{align}
    I_0 f (x,v) &= \int_0^{\tau(x,v)} f(\pi (\varphi_t(x,v)))\ dt, \qquad (x,v) \in \partial_+ SM, \label{eq:forward} \\
    \adj g (x) &= \int_{S_x} g(\foot (x,v)) dS_x(v), \qquad x \in M. \label{eq:adjoint}
\end{align} 
The operator $I_0$ is continuous from $L^2(M, \dM)$ to $L^2(\partial_+ SM, \mu\, d\Sigma^{2d-2})$, where 
\[
\mu(x,v) = g_x(v,\nu_x),
\]
\resp{$\dM$ is the Riemannian density on $M$, } and $d\Sigma^{2d-2}$ is the hypersurface measure inherited by $\partial_+ SM$ from the Sasaki volume form $d\Sigma^{2d-1}$ on $SM$. Given these choices of measure, $\adj $ is the Hilbert space adjoint of $I_0$. 

Recent works \cite{Monard2017,Monard2019,Monard2020a} justify the necessity of addressing mapping properties of $\adj  I_0$ in a way
that does not require use of an extension of $M$. For manifolds with strictly geodesically convex boundaries, the results in the literature are sparse. These offer a variety of unrelated insights: 
\begin{itemize}
\item Pestov and Uhlmann construct in \cite{Pestov2005} a space $C_{\alpha}^\infty(\partial_+ SM)$ (see Eq.\,\eqref{eq:Cpm} below) 
of functions whose extensions as first integrals of the geodesic flow are smooth on $SM$. This space satisfies
\begin{align}
	    \adj (C_\alpha^\infty(\partial_+ SM)) \subset C^\infty(M).
	    \label{eq:PU}
\end{align}
They go on to characterize $C_\alpha^\infty (\del_+ SM)$ in terms of the scattering relation using fold theory. We also refer to
\cite{Paternain2021} for a more recent take on this issue.
\item In \cite{Monard2017}, the authors show that $\adj  I_0$ extends to a larger manifold engulfing $M$; assuming it 
satisfies a $-\tfrac12$-transmission condition at $\partial M$, they conclude that 
	\begin{align}
	    \adj  I_0 \colon \left\{
		\begin{array}{lll}
		    d^{-1/2} C^\infty(M) & \stackrel{\cong}{\longrightarrow} & C^\infty(M), \\
		    H^{(-1/2), s}(M) & \stackrel{\cong}{\longrightarrow}  & H^{s+1}(M) \qquad (s>-1),
		\end{array}
		\right.
		\label{eq:MNPresult}
	    \end{align}
are isomorphisms. Here $H^{(-1/2), \bullet}(M)$ is the scale of H\"ormander transmission spaces, and $H^{\bullet}(M)$ the classical Sobolev 
scale. Even though \eqref{eq:MNPresult} implies a set of estimates, these mapping properties do not fully predict the mapping
properties of iterates of the operator $\adj  I_0$, which must be understood when using Landweber's iteration for purposes of inversion. 
\item In \cite{Monard2019a} it is shown that in the class of simple geodesic disks with constant curvature, the operator $\adj  \, \mu^{-1} I_0$ is 
an isomorphism of $C^\infty(M)$. This is done by constructing a Sobolev scale $\wtH^k(M)$, defined using a degenerate elliptic operator, 
which have intersection $C^\infty(M)$ and which satisfy 
	    \begin{align}
		\adj  \, \mu^{-1} I_0 \colon \wtH^k(M) \stackrel{\cong}{\longrightarrow} \wtH^{k+1}(M).
		\label{eq:Mresult}
	    \end{align}
In other words, by introducing the singular weight $\frac{1}{\mu}$, one can identify spaces which are stable under iteration of the modified 
normal operator $\adj  \, \mu^{-1} I_0$. 
\item In \cite{Katsevich2001}, the mapping properties for the Euclidean dual Radon transform (i.e., integration over hyperplanes) are given.
These allow one to understand how index sets on $\Rm\times \Sm^{n-1}$ translate into index sets on $\Rm^n$. In particular, this explains 
much about the Euclidean X-ray transform on the plane. 
\end{itemize}

The present work presents a first step towards a unified understanding of these various results about mapping properties of $I_0$, $\adj $ 
and the normal operators built out of them, at the level of Fr\'echet spaces.

As is well known in integral geometry, it is very helpful to recast the problem in terms of double fibrations, cf.\ Helgason's work 
on symmetric spaces \cite{Helgason2010}, Guillemin's realization of integral-geometric operators as FIO's \cite{Guillemin1979}, and 
the formulation of the Bolker condition \cite{Monard2013b,Holman2015}, which leads to a more precise understanding of the impact 
of interior conjugate points on the X-ray transform. A famous and basic example of double fibrations in the homogeneous space setting 
is that of Euclidean motions on $\Rm^2$ via centralizers of points and lines:
\begin{align*}
\Rm^2 \longleftarrow E(2) \longrightarrow \Rm\times\Sm^1.
\end{align*}
Using this, the two-dimensional X-ray transform can be viewed as the composition of pullback by the map onto $\Rm^2$ followed by
pushforward by the map onto $\Rm \times \Sm^1$. For the geodesic X-ray transform on non-homogeneous Riemannian manifolds, 
one can construct a {\em local} double fibration out of the point-geodesic relation \cite{Monard2013b,Gelfand1969}, or a double fibration 
of $SM$ by (base-) points and geodesics. However, if $M$ has a strictly convex boundary, such fibrations cannot be extended globally
due to the presence of 'short' geodesics. The difficulty arises because as $(x,v)\in SM$ approaches $S(\partial M)$, the maximal geodesic 
passing through $(x,v)$ collapses to a point. Because of this, the extensions to $SM$ of certain smooth functions on $\del_+ SM$
as first integrals of geodesic flow are neither smooth nor even conormally regular.  In this article, we show that this difficulty
can be `resolved' (in two senses of the word): there is a desingularization by means of a map 
\begin{align}
    \Upsilon \colon \partial_+ SM\times [0,1]\ni ( (x,v) ,u) \mapsto \varphi_{u\tau(x,v)}(x,v) \in SM,
    \label{eq:Upsilon}
\end{align}
recently introduced in \cite{Monard2020a}, which gives rise to a double $b$-fibration in the sense of geometric microlocal 
analysis \cite{Melrose1992,Grieser2001} (see Lemma \ref{lem:bfibrations} below), represented by the dashed lines in 
Figure \ref{fig:doublefib}. 
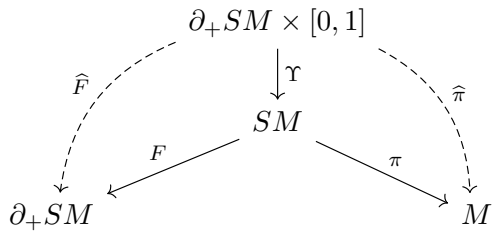
\begin{figure}[htpb]
    \centering
    \begin{tikzcd}[]
    & \partial_+ SM \times [0,1]  \arrow{d}{\Upsilon} \arrow[dashed, bend left]{ddr}{\widehat{\pi}} \arrow[dashed, bend right]{ddl}[swap]{\widehat{\foot}} & \\  &  SM \arrow{dl}[swap]{\foot}\arrow{dr}{\pi} & \\
\partial_+ SM && M
\end{tikzcd}
    \caption{Desingularization of the double fibration into a double b-fibration via the map $\Upsilon$}
    \label{fig:doublefib}
\end{figure}

\resp{Moreover, the operators $I_0$ and $\adj$ can be written as weighted compositions of pushforward and pullback by the maps $\hpi:= \pi\circ\Upsilon$ and $\hF:= \foot\circ\Upsilon$. In this setup, one can then in principle apply Melrose's pullback and pushforward Theorems to understand the mapping properties of $I_0$ and $\adj$ on smooth or polyhomogeneous functions. However, this process (carried out in Appendix \ref{sec:PFT} for comparison to our main results) produces polyhomogeneous functions which apparently have index sets, i.e., the set of exponents in their expansion, significantly larger than desired. This is less sharp than the result obtained by Pestov and Uhlmann \eqref{eq:PU}. 

Our main results, Theorems \ref{thm:Calphamapping} and \ref{thm:I0starIDX} below, provide mapping properties for $I_0$ and $\adj$ which recover the result by Pestov and Uhlmann, while extending the functional setting to spaces with polyhomogenous behavior at the boundaries of $M$ and $\partial_+ SM$. Notably, much of the work is in the study of $\adj$, which displays a variety of interesting phenomena (e.g., index set creation or cancellation, see Theorem \ref{thm:I0starIDX}) depending on the index set of the input function. To obtain such refinements, we first introduce} new singular coordinates on $\partial_+ SM$ (or more accurately, we blow up certain $p$-submanifolds in this space) which have the effect of resolving the scattering relation. We also need to examine the mechanism behind the pushforward theorem in greater detail, i.e., study the Mellin functionals, whose poles and residues give the index set and associated leading coefficients.  We can then see that certain poles do not in fact occur. 
This cancellation is ultimately due to fact that the Beta function,
\begin{align}
    B(z,z) = \int_0^1 (u(1-u))^{z-1}\, du, 
    \label{eq:Beta}
\end{align}
(and appropriate generalizations of it) has \resp{zeros} at the negative half integers $-\Nm_0-1/2$. Another important part of this is the introduction of a parity condition on the Taylor series (resulting in $C_\alpha^\infty$-index sets below, see Sec. \ref{sec:prelim}). Such parity conditions have been used in other applications of geometric microlocal analysis, in particular in local index theory and spectral geometry \cite{Melrose1993,Mazzeo2014}, asymptotics of Poincar\'e-Einstein metrics \cite{Fefferman1985} and renormalized volume \cite{Albin2009,Bahuaud2019}, analytic continuation and resolvent estimates for the Laplacian on asymptotically hyperbolic spaces \cite{Vasy2013,Vasy2017}, and inverse problems related to the above contexts \cite{MarxKuo2024,Eptaminitakis2021}.

In the next section, we formulate the main results before discussing some consequences, including refined boundary determination of polyhomogeneous integrands, and natural candidates for Fr\'echet spaces where it should be possible to formulate stable mapping properties of normal operators (i.e., weighted compositions of $I_0$ and $\adj$). We give an outline of the remainder of the article at the end of the next section.

\section{Statement of the main results} \label{sec:main}

\F{After first defining in Section \ref{sec:prelim} appropriate regularity classes and notions of index set (unrestricted on $M$, and with parity constraints on $\partial_+ SM$), Section \ref{ssec:main} gives the main results, Theorems \ref{thm:Calphamapping} and \ref{thm:I0starIDX},} describing how polyhomogeneous functions are transformed under the operators $I_0$ and $\adj $; in other words, we determine how these operators act on index sets. \F{In Section \ref{sec:consequences}, we formulate and prove some consequences of the main results. We also propose Conjecture \ref{conj} on the mapping properties of normal operators in singularly weighted situations.}

\subsection{Preliminaries: polyhomogeneous functions on manifolds with boundary} \label{sec:prelim}

Let $N^n$ be a manifold with boundary, and choose adapted coordinates $(x,y)$ near $\partial N$, where $x \geq 0$ is a boundary defining function (bdf) for $\del N$ and $y = (y_1, \dots, y_{n-1})$ restricting to a coordinate system on the boundary.  Although $C^\infty(N)$ is the most obvious class of `nice' functions on $N$, it is not the most natural in many analytic and geometric problems.  There are two standard characterizations of $C^\infty$:  smooth functions enjoy {\it stable regularity}, i.e., maintain regularity upon repeated differentiation, or that they admit Taylor series expansions.  Each of these can be generalized in slightly different ways, the first by restricting the types of differentiations which are allowed, and the second by allowing slightly more general asymptotic expansions as $x \to 0$.  These lead to the class of conormal functions and to the notion of polyhomogeneity, respectively.

The key to finding a suitable generalization of stable regularity is to weaken the set of allowable differentiations. Accordingly, we define $\mathcal V_b(N)$ to be the space of smooth vector fields on $N$ which are unconstrained in the interior but which lie tangent to the boundary. With $(x, y)$ a local coordinate system as above, we have
\begin{equation}
\mathcal V_b(M) = C^\infty-\mathrm{span}\, \{ x\del_x, \del_{y_1}, \ldots, \del_{y_{n-1}} \}.
\label{Vb}
\end{equation}

Now define the space of conormal functions of order $s \in \mathbb R$ on $N$ by 
\begin{align}
    \begin{split}
	\A^s (N) := \{u\in C^\infty(N^{o}) &: V_1 \ldots V_k u \in x^s L^\infty(N)  \\
	& \text{ for all } V_j \in \mathcal V_b(N)\ \mbox{and for all}\ k \in \mathbb N_0\}. 	
    \end{split}
    \label{eq:conormal}
\end{align}
(Here and below the superscript $o$ denotes the interior of the set in question.) Note that $\A^s(N)$ contains functions of the form $x^{s'}|\log x|^a$ for any $s'$ with $\mathrm{Re}\, s' > s$, $a \in \mathbb R$ (or $\mathrm{Re}\, s' \geq s$ if $a \leq 0$).   More generally, $\A^s(N)$ contains functions which are discrete or continuous superpositions of these basic ones. As a rule of thumb, conormal functions are completely smooth in tangential directions and have special regularity properties normal to the boundary. The intersection $\cap_{s\in \Rm} \A^s (N)$ is just the space $\dot{C}^\infty (N)$ of smooth functions vanishing to infinite order at $\partial N$. Here and below, we denote $\A^\infty(N) := \cup_{s\in \Rm} \A^s(N)$.

We next define the particularly nice families of polyhomogeneous functions, which are subspaces of the space of conormal functions.
Polyhomogeneous function are conormal, but in addition have classical asymptotic expansions involving 
powers $x^z (\log x)^k$ ($z \in \mathbb C$ and $k \in \mathbb N_0$) as $x \to 0$.  A particular space of polyhomogeneous
functions is determined by its index set $E$, which is the set of exponent pairs $(z,k)$ which
can appear in the expansions of its elements.  By definition, an index set $E$ is a subset of $\Cm\times \Nm_0$ with the following properties: 

\resp{
\begin{enumerate}
	\item[(a)] For every $s\in \Rm$, the set $E_{\le s} := \{(z,k)\in E : \text{Re} (z)\le s\}$ is finite. 
	\item[(b)] If $(z,k)\in E$, then $(z,\ell)\in E$ for any $\ell = 0,1,\dots,k-1$.
\end{enumerate}
If $E$ also satisfies
\begin{enumerate}
	\item[(c)] If $(z,k)\in E$, then $(z+1,k)\in E$,
\end{enumerate}
then it is called a $C^\infty$ index set.

Given any index set $E$, define 
\begin{equation}
    \A_{\mathrm{phg}}^E\resp{(N)} = \left\{u \in \A^\infty(N): u \sim \sum_{(z,k) \in E}  u_{z,k}(y) x^z (\log x)^k,\ \ u_{z,k} \in C^\infty(\del N) \right\}.
	\label{phg} 
\end{equation}
This asymptotic expansion is `classical' in the sense that the difference between $u$ and any finite partial sum of this series decays as $x \searrow 0$ 
like the next term in the series, and with corresponding decay properties for any derivative of $u$ and the corresponding term-by-term derivative of the series.

Condition (a) implies the existence of a real number $\inf E := \inf \{\text{Re} (z),\ (z,k)\in E\}$ such that $\A_{\mathrm{phg}}^E\resp{(N)} \subset \A^s(N)$ 
for all $s<\inf E$. If $k=0$ for all $(z,k)\in E$ with $\text{Re}(z) = \inf E$, then $\A_{\mathrm{phg}}^E\resp{(N)} \subset \A^{\inf E}(N)$.  
If condition (c) is satisfied, then the moniker $C^\infty$ index set ensures that $\A_{\mathrm{phg}}^E(N)$ is a $C^\infty(N)$ module and is 
invariant under any smooth change of variables of $N$. 

As examples, $\A_{\mathrm{phg}}^{\Nm_0\times \{0\}}(N) = C^\infty(N)$ and $\A_{\mathrm{phg}}^{\Nm_0\times \{0,1\}}(N) = C^\infty(N)+\log x \, C^\infty(N)$.

\paragraph{Even structures: $C_\alpha^\infty$-index sets on $\partial_+ SM$.} When the manifold $N$ has additional symmetries, the spaces $\A_{\phg}^E$ 
may not encode these symmetries, and refinements of the objects above are needed. In our setting, the manifold of interest is $\partial_+ SM$, 
as defined in \eqref{eq:dSM}; it has boundary
\begin{align}
	\partial_0 SM = \partial_+ SM\cap \partial_- SM = \{(x,v)\in SM:\ x\in \partial M,\ g_x(v,\nu_x) = 0\}.
	\label{eq:d0SM}
\end{align}
When $(M,g)$ is non-trapping with strictly convex boundary, then $\partial SM$ carries a natural involution called the scattering relation 
$\alpha\colon \partial SM\to \partial SM$. This is defined by
\begin{align}
	\alpha(x,v) = \varphi_{\pm \tau(x,\pm v)} (x,v), \quad (x,v) \in \partial_\pm SM;
	\label{eq:scatrel}
\end{align}
it leaves each $(x,v)\in \partial_0 SM$ fixed.   Using this involution, one may define the important subspaces of $C^\infty(\partial_+ SM)$
\begin{align}
C_{\alpha,\pm}^\infty (\partial_+ SM) := \{u\in C^\infty(\partial_+ SM),\quad A_\pm u \in C^\infty(\partial SM)\},
\label{eq:Cpm}
\end{align}
where 
\begin{align*}
A_\pm u(x,v) := \left\{
    \begin{array}{ll}
	u(x,v), & (x,v)\in \partial_+ SM \\
	\pm u(\alpha(x,v)), & (x,v)\in \partial_- SM
    \end{array}
    \right.
\end{align*}
for $u\in C^\infty(\partial_+ SM)$. 
(Note that $C_{\alpha,+}^\infty$ coincides with the space $C_\alpha^\infty(\partial_+ SM)$ described in the introduction.) 
Functions in $C_{\alpha,+/-}^\infty (\partial_+ SM)$ are smooth in the interior of $\partial_+ SM$ with Taylor expansions at $\partial_0 SM$ which 
involve only even/odd terms. This makes sense only if these expansions are written with respect to a restricted class of `even' boundary defining functions,
described below. To state our main results, we introduce the key definitions and relegate details to Section \ref{sec:indexSets}. 

On $\partial_+ SM$, a bdf $\bt$ for $\partial_0 SM$ is called an $\alpha$-boundary defining function ($\alpha$-bdf) if moreover $\bt\in C_{\alpha,-}^\infty(\partial_+ SM)$. A good example of an $\alpha$-bdf is the exit time function $\tau\colon \partial_+ SM\to \Rm$. A $C_\alpha^\infty$-index set on $\partial_+ SM$ is an index set as defined via conditions (a)-(b) above, and where condition (c) is replaced by 
\begin{enumerate}
	\item[(c')] If $(z,k)\in E$, then $(z+2,k)\in E$.
\end{enumerate}
Given a $C_\alpha^\infty$-index set $E$, we then say that $u\in \A_{\phg,\alpha}^E(\partial_+ SM)$ if $u\in C^\infty( (\partial_+ SM)^o)$ and if for some $\alpha$-bdf $\bt$, we have the asymptotic expansion off of $\partial_0 SM$
\begin{align}
	u\sim \sum_{(z,k)\in E} \bt^z (\log \bt)^k u_{z,k},\quad u_{z,k}\in C^\infty(\partial_0 SM).
	\label{eq:Aphgalpha}
\end{align}
In Section \ref{sec:indexSets}, we establish that such a definition is invariant under change of $\alpha$-bdf and stable under multiplication by elements of $C_{\alpha,+}^\infty(\partial_+ SM)$. As notable examples, $C_{\alpha,+}^\infty(\partial_+ SM) = \A_{\phg,\alpha}^{2\Nm_0\times \{0\}} (\partial_+ SM)$ and $C_{\alpha,-}^\infty(\partial_+ SM) = \A_{\phg,\alpha}^{(2\Nm_0+1)\times \{0\}} (\partial_+ SM)$.

}

\subsection{Main results} \label{ssec:main}

\paragraph{Weighted X-ray transforms.} 

Let $(M,g)$ be convex and non-trapping, let $\phi\in C^\infty(SM)$ be a smooth weight function, and for $f\in C_{\resp{c}}^\infty((SM)^{o})$, define
\begin{align*}
I^\phi f (x,v) = \int_0^{\tau(x,v)} \phi(\varphi_t(x,v)) f(\varphi_t (x,v))\ dt, \qquad (x,v) \in \partial_+ SM. 
\end{align*}
Then $I^\phi f\in C^\infty( (\partial_+ SM)^{o})$ and our first result, \resp{whose proof is given Sec. \ref{sec:proofmainXray},} shows that if $f$ is \resp{polyhomogeneous}, then $I^\phi f$ is also polyhomogeneous with index set determined by that of $f$.  

\begin{Theorem} \label{thm:Calphamapping}
Let $(M,g)$ be convex, non-trapping, and let $\phi\in C^\infty(SM)$. Suppose $f \in \A^E_{\phg}(SM)$ for some \resp{$C^\infty$-}index set $E$ with 
$s = \inf(E)>-1$. Then $I^\phi f \in  \A^F_{\phg\F{,\alpha}}(\del_+ SM)$, where $F$ is a $C^\infty_\alpha$-index set contained in 
(and possibly equal to) $\{ (2z+1,\ell), \ (z,\ell)\in E \}$.

More precisely, fix $\gamma\in \Cm$ \resp{with $\text{Re}(\gamma)>-1$}, $k\in \Nm_0$, $h\in C^\infty(\partial SM)$ and $\chi\in C_c^\infty([0,\infty))$ a cutoff function equal to $1$ near $0$.
Using coordinates $(\rho,\xi)\in [0,\varepsilon)\times \partial SM$ on $SM$ near $\partial SM$ and $(\tau,\omega)$ on $\partial_+ SM$ near 
$\partial_0 SM$ with $\omega = (y,w)\in \partial_0 SM$, as defined in Lemma \ref{lem:scatcoords}, and denoting by $\tau$ the geodesic length, we have 
\begin{align*}
	I^\phi [\F{\chi(\rho)}\rho^\gamma \log^k \rho\ h(\xi)] (\tau,\omega) \sim  \tau^{2\gamma+1} \sum_{p\ge 0 \atop 0 \le \ell \le k} \tau^{2p} (\log\tau)^\ell 
a_{2\gamma+1+p,\ell}(\omega).
\end{align*}
If $\rho(x) = d_g(x,\partial M)$ near $\partial M$, the coefficient in front of the most singular term equals 
    \begin{align}
	    a_{2\gamma+1,k}(\omega) = 2^k h(\omega) \phi(\omega) \two(\resp{\omega})^\gamma\ B(\gamma+1,\gamma+1), \qquad \resp{\omega\in \partial_0 SM,}
	\label{eq:I0BottomTerm}
    \end{align}
    where $\two(\omega) := \two_y(\resp{w},\resp{w})$ is the second fundamental form, and $B$ is the Beta function \eqref{eq:Beta}.
\end{Theorem}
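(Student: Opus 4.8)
The plan is to reduce the claim to a local computation near the glancing set $\partial_0 SM$, where the geometry of $\tau$ degenerates, and to extract the top coefficient by a Mellin/stationary-phase style analysis. First I would reformulate $I^\phi f$ as a pushforward: using the desingularization map $\Upsilon$ from \eqref{eq:Upsilon}, one has $\Upsilon^*(\varphi_t\text{-pulled back integrand})$, and the substitution $t = u\,\tau(x,v)$ turns the defining integral into
\[
I^\phi f(x,v) = \tau(x,v)\int_0^1 (\Upsilon^*\phi)(x,v,u)\,(\Upsilon^*f)(x,v,u)\,du,
\]
i.e. a fiber integral over the $[0,1]_u$ factor of $\partial_+SM\times[0,1]$. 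Since $\Upsilon$ is a $b$-fibration onto $SM$ (Lemma \ref{lem:bfibrations}) and $\hpi = \pi\circ\Upsilon$, while the projection onto $\partial_+SM$ is $\hF$ composed with the obvious map, the pushforward theorem already tells us $I^\phi f$ is polyhomogeneous with \emph{some} index set, and inspection of the geometric data of these maps shows the naive index set lies in $\{(2z+1,\ell):(z,\ell)\in E\}$ — the factor $2$ coming from the fact that $\Upsilon^*\rho$ vanishes to order one both at $u=0$ and at $u=1$ near a glancing point, while the prefactor $\tau$ contributes the $+1$. The condition $\inf(E)>-1$ is exactly what is needed for the integral in $u$ to converge (the integrand behaves like $(u(1-u))^{z}$ near the endpoints).

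The core of the argument is the explicit coefficient \eqref{eq:I0BottomTerm}, and for this I would work in the scattering-adapted coordinates $(\tau,\omega)$ on $\partial_+SM$ and $(\rho,\xi)$ on $SM$ supplied by Lemma \ref{lem:scatcoords}, choosing $\rho(x)=d_g(x,\partial M)$. Along the geodesic through a near-glancing point parametrized so that its total length is $\tau$, the distance-to-boundary function has the expansion $\rho(\varphi_{u\tau}(x,v)) = \tau^2\,u(1-u)\,\two_y(w)\,(1+O(\tau))$: this is the standard second-order Taylor expansion of $d_g(\cdot,\partial M)$ along a geodesic that starts and ends on $\partial M$, with the coefficient being precisely the second fundamental form evaluated on the (limiting) tangential direction $w$, and the $u(1-u)$ arising because the geodesic is, to leading order, a parabola in the boundary-normal direction. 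Substituting $\Upsilon^*[\rho^\gamma\log^k\rho\,h(\xi)] = (\tau^2 u(1-u)\two_y(w))^\gamma (\log\tau^2 + \log u(1-u)+\log\two_y(w)+\cdots)^k\,h(\omega)(1+\cdots)$ into the fiber integral, pulling out $\tau^{2\gamma+1}$, and expanding the remaining factor in powers of $\tau$ produces the asserted asymptotic series $\tau^{2\gamma+1}\sum_{p,\ell}\tau^{2p}(\log\tau)^\ell a_{2\gamma+1+p,\ell}(\omega)$; the coefficient of the top term $\tau^{2\gamma+1}(\log\tau)^k$ (with no extra $\tau$) picks up $h(\omega)\phi(\omega)\two_y(w)^\gamma$ times $2^k$ (the $2$ from $\log\tau^2 = 2\log\tau$, all other log-contributions being subleading in $\tau$) times $\int_0^1 (u(1-u))^\gamma\,du = B(\gamma+1,\gamma+1)$.

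The main obstacle I anticipate is controlling the error terms uniformly and showing the expansion is genuinely classical/polyhomogeneous rather than merely conormal — i.e. justifying that one may substitute the Taylor expansion of $\Upsilon^*\rho$ and integrate term-by-term in $u$. This is where the precise structure of the coordinates from Lemma \ref{lem:scatcoords} matters: one needs $\Upsilon^*\rho / (\tau^2 u(1-u))$ to be a smooth positive function on the blown-up space with a full Taylor expansion, so that each term $\int_0^1 (u(1-u))^{\gamma+j} (\text{smooth in }u)\,du$ is again a Beta-type integral, finite because $\operatorname{Re}\gamma>-1$ persists after adding $j\ge 0$. A secondary subtlety is that away from the glancing set $\partial_0 SM$ the map $\tau$ is bounded below and $\rho^\gamma$ is pushed forward by an honest fibration, contributing only to $\{(2z,\ell)\}$-type (even, no $\log$-enhancement) or to the interior — so the statement that $F$ is a $C^\infty_\alpha$-index set, and in particular \emph{even} in the sense of Section \ref{sec:indexSets}, requires checking that only even powers $\tau^{2\gamma+2p}$ occur, which follows from the $u\leftrightarrow 1-u$ symmetry of the leading model together with the evenness built into the $\alpha$-bdf's; I would defer the full vanishing-of-odd-coefficients argument to the later parity analysis and here only claim containment in $\{(2z+1,\ell):(z,\ell)\in E\}$, as the theorem statement does.
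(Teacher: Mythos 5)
Your overall strategy and your computation of the leading coefficient coincide with the paper's: substitute $t=\tau u$, use the factorization $\rho\circ\Upsilon=\tau^2u(1-u)F$ with $F=\two_y(w)$ on the glancing set, expand $\log(\tau^2u(1-u)F)$ to extract the factor $2^k$, and recognize $\int_0^1(u(1-u))^\gamma\,du=B(\gamma+1,\gamma+1)$. That part is fine, and the integrability remark about $\inf(E)>-1$ is also correct.

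There is, however, a genuine gap in how you establish the index set. First, your opening claim that ``the pushforward theorem already tells us \ldots the naive index set lies in $\{(2z+1,\ell):(z,\ell)\in E\}$'' is false: the pushforward theorem applied to these $b$-fibrations produces the strictly larger set $\{(q+2z,\ell):\,q\in\Nm,\ (z,\ell)\in E\}$, containing \emph{all} integer shifts $q+2z$ and not only the odd ones $2z+1+2p$ (this is exactly Lemma \ref{lem:xray}, and the paper emphasizes that it is not sharp). Second, you then propose to ``defer the full vanishing-of-odd-coefficients argument to the later parity analysis and here only claim containment in $\{(2z+1,\ell)\}$'' --- but that containment \emph{is} the vanishing-of-odd-coefficients statement; you cannot defer one while asserting the other. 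This is the main nontrivial content of the first assertion of the theorem, and your proposal never actually proves it. The correct mechanism is not a symmetry ``of the leading model'': one must use the exact identities $\Upsilon(\alpha(x,v),u)=\Upsilon(x,v,1-u)$ and $F(\alpha(x,v),u)=F(x,v,1-u)$ to conclude that each coefficient
\begin{align*}
a_{1+2z,\ell}(x,v)=\int_0^1(u(1-u))^zF^z\bigl(\log(u(1-u)F)\bigr)^{k-\ell}\,h(\Upsilon)\,\phi(\Upsilon)\,du
\end{align*}
is invariant under the scattering relation $\alpha$, hence lies in $C^\infty_{\alpha,+}(\partial_+SM)$, and then invoke Lemma \ref{lem:bdfexpansion} to conclude that its Taylor expansion in the $\alpha$-bdf $\tau$ contains only even powers. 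With that step inserted, your argument matches the paper's proof.
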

Letting $\phi \equiv 1$ and supposing that $h(\xi)$ is independent of the tangent variable $v$ (i.e., replace $\xi\in \partial SM$ by $y\in \partial M$)
this result includes the operator $I_0$.  This result generalizes \cite[Proposition 6.13]{Monard2020a}. 

\paragraph{Backprojection operator.} Now turn to the backprojection operator $\adj $.  As an elementary input function, consider 
$a(\omega) \resp{\chi(\tau) \tau^\gamma \log^k \tau}$, where $\omega\in\partial_0 SM$, 
$a(\omega)$
is smooth, $\chi \in C_c^\infty([0,\infty))$ is a cutoff function equal to $1$ near $0$, and $\tau$ is the geodesic length. \resp{Here and below, for $y\in \partial M$, $dA(y)$ denotes the Riemannian hypersurface density on $\partial M$ and $dS_y$ denotes the Euclidean measure on the unit sphere $S_y (\partial M)$. Our second main result, whose proof is given in Section \ref{sec:proofbackprojection}, is the following.}

\begin{Theorem}\label{thm:I0starIDX}
	Fix $\gamma\in \Cm$, $k\in \Nm_0$ and $a\in C^\infty(\partial_0 SM)$. Then $\adj  (a(\omega) \resp{\chi(\tau) \tau^\gamma \log^k \tau} )$ is polyhomogeneous,
    with index set contained in (and possibly equal to) the union \F{$\Nm_0\times \{0\} \cup E_{\gamma,k}$, where 
    \begin{align}
	E_{\gamma,k} := \left\{
	    \begin{array}{ll}
		(\frac{\gamma+1}{2} + \Nm_0) \times \{0, \dots, k-1\}, & \gamma\in 2\Nm_0, \\
		(\frac{\gamma+1}{2} + \Nm_0) \times \{0, \dots, k\} \cup ( \Nm_0 \cap (\frac{\gamma+1}{2} + \Nm_0) ) \times \{k+1\}, & \gamma\in 2\F{\Zm}+1, \\
		(\frac{\gamma+1}{2} + \Nm_0) \times \{0, \dots, k\}, & \text{otherwise},
	    \end{array}
	    \right.
	    \label{eq:Egamk}
	\end{align}
	where in the first case, $E_{\gamma,k} = \emptyset$ if $k=0$.}

	In coordinates $(\rho,y)\in [0,\varepsilon)\times \partial M$ on $M$ near $\partial M$ 
	    with $\rho(x) = d_g(x,\partial M)$, we conclude that
	    \begin{align*}
		\adj  (a(\omega) \resp{\chi(\tau) \tau^\gamma \log^k \tau} ) \sim \sum_{(z,\ell)\in \Nm_0 \cup E_{\gamma,k}} a_{z,\ell}(y) \rho^z \log^\ell \rho,
	    \end{align*}
	    where the lowest order coefficient in the part of the expansion corresponding to $E_{\gamma,k}$ is given by
	    \begin{align*}
		a_{z_0,\ell_0}(y) = c_{z_0,\ell_0} \int_{S_y (\partial M)} a(y,w) \two_y(w)^{\frac{-\gamma+1}{2}}\ dS_y(w), \qquad \omega = (y,w)\in \partial_0 SM. 		
	    \end{align*}
	    Here 
	    \begin{equation}
		\begin{cases}
		    \gamma=2m \Rightarrow (z_0,\ell_0) = (m+1/2,k-1), \  &c_{z_0,\ell_0} = \frac{2\pi k}{2^{k+3}} \frac{4^{2m+2}}{2m+2} \binom{2m+2}{m+1}^{-1},\ m\in\Nm_0,  \\[0.5ex]
		    \gamma=2m\F{-1} \Rightarrow (z_0,\ell_0) = (\F{m}, k+1),  & c_{z_0,\ell_0} = -\binom{\F{2m}}{\F{m}}/((k+1) 2^{k+2}), \ \  \F{m\in\Nm_0}, \\[0.5ex]
		    \gamma\notin \Nm_0\F{-1} \Rightarrow  (z_0,\ell_0) = (\frac{\gamma+1}{2},k), & c_{z_0,\ell_0} = {2^{-(k+3)}} 
		    B\left(-\frac{\gamma+1}{2}, -\frac{\gamma+1}{2} \right).  
		\end{cases}
		\label{eq:Agk}
	    \end{equation}
\end{Theorem}

The other coefficients of the expansion with exponents in $E_{\gamma,k}$ are locally determined, and in principle computible, while the coefficients in the part of the expansion with coefficients in $\Nm_0\times\{0\}$ are non-local.

\resp{
    
    \begin{Remark}
	To put the first half of Theorem \ref{thm:I0starIDX} in invariant terms, fix $E$ a $C_{\alpha}^\infty$-index set. By invariance of $\A_{\phg,\alpha}^E (\partial_+ SM)$ with respect to an $\alpha$-bdf, any element of $h\in \A_{\phg,\alpha}^E (\partial_+ SM)$ may be written as an asymptotic sum in the $\alpha$-bdf function $\tau$, and applying Theorem \ref{thm:I0starIDX} to each term separately yields
	\begin{align}
	    I_0^\sharp \left( \A_{\phg,\alpha}^E (\partial_+ SM) \right) \subset \A_{\phg}^F (M), \qquad \text{where} \quad F:= \Nm_0 \times \{0\} \cup \bigcup_{(\gamma,k)\in E} E_{\gamma,k}, 
	    \label{eq:mappingI0sharp}
	\end{align}
	with $E_{\gamma,k}$ defined in \eqref{eq:Egamk}. One easily verifies that $F$ is a $C^\infty$-index set. 
    \end{Remark}    
}

\F{
\begin{Remark}
	As the scheme of proof of Theorem \ref{thm:I0starIDX} is based on a correspondence between index sets of a phg function and the set of poles of its associated 'Mellin functional' (see \ref{sec:Mellin}), the first case of Eq. \eqref{eq:Egamk} is an instance where poles cancel at the Mellin level. This case recovers the result \eqref{eq:PU}, which a standard use of the Push-Forward Theorem could not recover (see the discussion at the end of Appendix \ref{sec:PFT}).  

    The second case in Eq. \eqref{eq:Egamk} is an instance where poles of higher order (or, equivalently, higher powers of $\log \rho$ in expansion terms) become created. This only affects the most singular term in the expansion if $\gamma\in 2\Nm_0-1$, hence the discrepancy between the cases of \eqref{eq:Egamk} and those of \eqref{eq:Agk}.
\end{Remark}
}

\subsection{Consequences of main results} \label{sec:consequences}

\subsubsection{Boundary determination}

\begin{Theorem}[Boundary determination for functions on $M$]\label{thm:bdrydet}
Let $\phi\in C^\infty(SM)$ be such that for every $y\in \partial M$, there exists some $w\in S_y(\partial M)$ with $\phi(y,w)\ne 0$. 
Suppose $f\in \A^E_{\phg}(M)$ for some index set $E$ with $\inf (E)>-1$. 

(i) If $I^\phi f \in \A^{1+2s'} (\partial_+ SM)$ for some $s'>\inf E$, then $f\in \A^{E_{\geq s'}}_{\phg}(M)$. 

    (ii) If $I^\phi f \in \dot{C}^\infty (\partial_+ SM)$, then $f\in \dot{C}^\infty(M)$.
\end{Theorem}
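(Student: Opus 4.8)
\emph{Strategy.} The plan is to read off the leading boundary asymptotics of $f$ from those of $I^\phi f$, using Theorem~\ref{thm:Calphamapping} as the one essential input and an induction that strips the expansion of $f$ one term at a time; part (ii) will then drop out of part (i) by letting $s'\to\infty$. Fix $\rho=d_g(\cdot,\partial M)$ as boundary defining function for $\partial M$ and write $f\sim\sum_{(z,k)\in E}h_{z,k}(y)\,\rho^z(\log\rho)^k$. Since $\pi\colon SM\to M$ is a $b$-submersion with $\pi^*\rho$ a boundary defining function for $\partial SM$, the pullback $\pi^*f$ is polyhomogeneous with the same index set $E$, its leading coefficients $h_{z,k}\circ\pi$ being independent of the fibre variable, and $I^\phi f=I^\phi(\pi^*f)$; thus Theorem~\ref{thm:Calphamapping} applies directly with $h(\xi)=h_{z,k}(y)$. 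The conclusion $f\in\A^{E_{>s'}}_{\phg}(M)$ of (i) is precisely the assertion that $h_{z,k}\equiv 0$ for every $(z,k)\in E$ with $\mathrm{Re}(z)\le s'$; I would prove this by induction over the finite set $E_{\le s'}$, ordered by increasing $\mathrm{Re}(z)$ and, within a fixed value of $\mathrm{Re}(z)$, by decreasing log-power.

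\emph{Inductive step.} Suppose $h_{z,k}\equiv 0$ has been established for all $(z,k)$ with $\mathrm{Re}(z)<\sigma$, for some $\sigma\le s'$ (the base case being vacuous), and let $z_1,\dots,z_r\in E$ be the exponents with $\mathrm{Re}(z_j)=\sigma$, with $k_j$ the highest log-power occurring at $z_j$. By linearity and Theorem~\ref{thm:Calphamapping}, together with the conormal analogue $I^\phi(\A^{s}(SM))\subset\A^{2s+1}(\partial_+SM)$ (proved exactly as the conormal version of Theorem~\ref{thm:Calphamapping}, and used to absorb the infinite tail of the expansion of $f$ into a remainder of strictly higher order), the polyhomogeneous expansion of $I^\phi f$ at $\partial_0 SM$ has at the slot $\tau^{2z_j+1}(\log\tau)^{k_j}$ the coefficient
\[
2^{k_j}\,h_{z_j,k_j}(y)\,\phi(y,w)\,\two_y(w)^{z_j}\,B(z_j+1,z_j+1),\qquad \omega=(y,w)\in\partial_0 SM,
\]
with no competing contribution from lower-order terms of $f$ (these vanish by the inductive hypothesis) or from the remainder. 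Since $\mathrm{Re}(2z_j+1)=2\sigma+1\le 1+2s'$ while $I^\phi f\in\A^{1+2s'}(\partial_+SM)$, this coefficient must vanish identically on $\partial_0 SM$. Now strict convexity gives $\two_y(w)>0$, hence $\two_y(w)^{z_j}\ne 0$; and $\mathrm{Re}(z_j)=\sigma\ge\inf(E)>-1$ forces $\mathrm{Re}(z_j+1)>0$, the regime in which the Beta function $B(z,z)=\Gamma(z)^2/\Gamma(2z)$ is finite and nonzero. Therefore $h_{z_j,k_j}(y)\,\phi(y,w)\equiv0$; choosing for each $y$ a direction $w\in S_y(\partial M)$ with $\phi(y,w)\ne0$, which is possible by hypothesis, yields $h_{z_j,k_j}\equiv0$ on $\partial M$. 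Running this over $j=1,\dots,r$ — the slots $\tau^{2z_j+1}(\log\tau)^m$ for distinct $z_j$ and $0\le m\le k_j$ are genuinely distinct terms of a polyhomogeneous expansion, so their coefficients cannot interfere — and then lowering the log-power by one, we obtain $h_{z,k}\equiv0$ for all $(z,k)$ with $\mathrm{Re}(z)=\sigma$, completing the step. Part (ii) follows at once: $\dot C^\infty(\partial_+SM)=\bigcap_{s'}\A^{1+2s'}(\partial_+SM)$, so (i) applied for all $s'>\inf(E)$ gives $f\in\bigcap_{s'}\A^{E_{>s'}}_{\phg}(M)=\dot C^\infty(M)$.

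\emph{Main obstacle.} The analytic heart of the statement is entirely contained in Theorem~\ref{thm:Calphamapping}; what remains is bookkeeping, and the real work lies in making that bookkeeping airtight. Concretely I would need (a) the conormal (non-polyhomogeneous) mapping statement $\A^{s}(SM)\to\A^{2s+1}(\partial_+SM)$, in order to legitimately reduce to finitely many terms of $f$ at each stage; and (b) the standard but essential fact that the coefficients of $\tau^{z}(\log\tau)^k$ in a polyhomogeneous expansion are well defined, so that contributions from distinct source exponents $z_j$ with equal real part (which differ by oscillatory factors $\tau^{i\,\mathrm{Im}(z_j)}$) cannot cancel one another. A minor borderline point is the case $\mathrm{Re}(z)=s'$ with no logarithm, where $\tau^{2z+1}$ has order exactly $1+2s'$: one handles this either by reading membership in $\A^{1+2s'}$ in the strict sense used elsewhere, or by applying the argument with $s'$ replaced by $s'-\epsilon$ and then letting $\epsilon\downarrow 0$. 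Finally, the hypothesis $\inf(E)>-1$ enters in exactly one place — keeping $\mathrm{Re}(z_j+1)>0$, so that the Beta factor is nonzero; dropping it would stall the argument precisely at the exponents where $B(z_j+1,z_j+1)$ could vanish, mirroring the pole-cancellation phenomenon of Theorems~\ref{thm:Calphamapping} and~\ref{thm:I0starIDX}.
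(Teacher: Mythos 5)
Your argument is correct and follows essentially the same route as the paper: both proofs extract the leading coefficient formula \eqref{eq:I0BottomTerm} from Theorem \ref{thm:Calphamapping}, use the nonvanishing of $B(\gamma+1,\gamma+1)$ for $\mathrm{Re}(\gamma)>-1$, the positivity of $\two_y(w,w)$, and the hypothesis on $\phi$ to kill the most singular coefficient of $f$, and then iterate over the finitely many exponents with real part at most $s'$ (the paper phrases the iteration as a contradiction on the most singular term; you make the induction and the non-interference of equal-real-part exponents explicit). The borderline issue you flag at $\mathrm{Re}(z)=s'$ is present in the paper's own proof as well and does not distinguish the two arguments.
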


\begin{proof}[Proof of Theorem \ref{thm:bdrydet}]
	For (i), suppose that $I^\phi f \in \A^{1+2s'} (\partial_+ SM)$ with $s' > \inf(E)$, and assume by contradiction that the most singular term in the expansion of $f$ is of the form $h(\xi) \rho^\gamma (\log \rho)^k$ with $Re(\gamma) < s'$. By assumption, the corresponding coefficient $a_{1+2\gamma, k}(\omega)$, as given by \eqref{eq:I0BottomTerm}, must vanish for all $\omega = (y,w)\in \partial_0 SM$. Note that in the case where $f$ is a function on $M$, $h$ only depends on $y$ and not on $w$. Fixing an arbitrary $y\in \partial M$, we evaluate the product \eqref{eq:I0BottomTerm} at an $w\in S_y (\partial M)$ such that $\phi(y,w) \ne 0$. There, we have $B(\gamma+1,\gamma+1) \neq 0$ since $Re(\gamma)>-1$, and $\two_y(w,w) > 0$ by convexity, and hence we must have $h(y) = 0$. 

	Notice that this result is local: if $I^\phi f \in \A^{1+2s'}$ only in a neighborhood of some point $y_0$, then the coefficients of the terms in the expansion of $f$ with $Re(z) < s'$ must vanish in that neighborhood.

	The proof of (ii) follows from (i) and the fact that $\dot{C}^\infty(M) = \cap_{s\in \Rm} \A^s(M)$. \resp{It is likely that this follows from other techniques, but we include this here for completeness.}
\end{proof}

The same proof (without the assumption that the coefficient $h$ in the proof above only depends on the basepoint) yields the \resp{following result.}

\begin{Theorem}[Boundary determination for functions on $SM$]\label{thm:bdrydetSM}
Let $\phi\in C^\infty(SM)$ and let $N_\phi \subset \partial_0 SM$ be the set of points of $\partial_0 SM$ where $\phi$ is nonzero. 
Suppose $f\in \A^E_{\phg}(SM)$ for some $C^\infty$ index set $E$ with $\inf(E)>-1$. 

(i) If $I^\phi f \in \A^{1+2s'} (\partial_+ SM)$ for some $s'>\inf E$, then for all \resp{$(\gamma,k)\in E$ such that $Re(\gamma)<s'$}, $f_{\gamma,k}$ vanishes on $N_\phi$. 

    (ii) If $I^\phi f \in \dot{C}^\infty (\partial_+ SM)$, then for all $(\gamma,k)\in E$, $f_{\gamma,k}$ vanishes on $N_\phi$.
\end{Theorem}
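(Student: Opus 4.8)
The plan is to mimic the proof of Theorem \ref{thm:bdrydet} verbatim, simply dropping the extra input that $h$ depends only on the basepoint. Since the hypothesis now only asks that $f\in\A^E_{\phg}(SM)$ with $\inf(E)>-1$, Theorem \ref{thm:Calphamapping} applies directly and gives the asymptotic expansion of $I^\phi f$ on $\del_+ SM$, with the leading coefficient of the term arising from $\rho^\gamma(\log\rho)^k h(\xi)$ given by \eqref{eq:I0BottomTerm}, namely $a_{2\gamma+1,k}(\omega) = 2^k h(\omega)\phi(\omega)\,\two_y(w)^\gamma B(\gamma+1,\gamma+1)$, with $\omega=(y,w)\in\partial_0 SM$.

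For part (i): argue by contradiction. Suppose some $(\gamma,k)\in E_{\le s'}$ has $f_{\gamma,k}\not\equiv 0$ on $N_\phi$; among all such, pick one with $\mathrm{Re}(\gamma)$ minimal (and then $k$ maximal). Because $\mathrm{Re}(s')>\mathrm{Re}(s)=\inf(E)$ and $I^\phi f\in\A^{1+2s'}(\del_+ SM)$, the coefficient $a_{2\gamma+1,k}(\omega)$ in the expansion of $I^\phi f$ must vanish identically on $\partial_0 SM$ — here one uses that $\mathrm{Re}(2\gamma+1)\le 1+2\,\mathrm{Re}(s') $, that the index set $F$ of $I^\phi f$ is contained in $\{(2z+1,\ell):(z,\ell)\in E\}$ so that no other term of $F$ with the same exponent $2\gamma+1$ and a higher power of $\log$ can contribute, and the minimality of $\mathrm{Re}(\gamma)$. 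Now fix any $\omega=(y,w)\in N_\phi$; then $\phi(\omega)\ne 0$, $\two_y(w)>0$ by strict convexity so $\two_y(w)^\gamma\ne 0$, and $B(\gamma+1,\gamma+1)\ne 0$ since $\mathrm{Re}(\gamma)>-1$ (the Beta function has no zeros, only poles). Hence $a_{2\gamma+1,k}(\omega)=0$ forces $h(\omega)=f_{\gamma,k}(\omega)=0$, i.e. $f_{\gamma,k}$ vanishes on $N_\phi$, contradicting the choice. Part (ii) then follows because $\dot C^\infty(\del_+ SM)=\cap_{s\in\Rm}\A^s(\del_+ SM)$, so applying (i) with $s'\to\infty$ shows every coefficient $f_{\gamma,k}$, $(\gamma,k)\in E$, vanishes on $N_\phi$.

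The one genuinely new point compared with Theorem \ref{thm:bdrydet} — where evaluating at a single well-chosen $w$ with $\phi(y,w)\ne 0$ sufficed because $h=h(y)$ was constant in $w$ — is that here $h=f_{\gamma,k}$ is a genuine function on $\partial_0 SM$, so the conclusion one can extract at each $\omega$ is pointwise: $a_{2\gamma+1,k}(\omega)=0$ and $\phi(\omega)\ne 0$ give $f_{\gamma,k}(\omega)=0$ only at those $\omega$ where $\phi\ne 0$, which is exactly the set $N_\phi$. This is why the statement is phrased with "$f_{\gamma,k}$ vanishes on $N_\phi$" rather than "$f_{\gamma,k}\equiv 0$". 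I expect the main (minor) obstacle to be the bookkeeping that no term of the index set $F$ of $I^\phi f$ other than the one coming from $(\gamma,k)$ contributes to the coefficient of $\tau^{2\gamma+1}(\log\tau)^k$: this needs the containment $F\subset\{(2z+1,\ell):(z,\ell)\in E\}$ from Theorem \ref{thm:Calphamapping} together with the minimality of $\mathrm{Re}(\gamma)$ and maximality of $k$, exactly as in the proof of Theorem \ref{thm:bdrydet}, where the same step is used implicitly. As noted there, the argument is local in $y\in\partial M$: if $I^\phi f\in\A^{1+2s'}$ only near a point of $\partial_0 SM$, one concludes vanishing of the corresponding $f_{\gamma,k}$ on the intersection of $N_\phi$ with that neighborhood.
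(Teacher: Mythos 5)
Your proposal follows exactly the paper's proof: the paper's entire argument for Theorem \ref{thm:bdrydetSM} is the single remark that the proof of Theorem \ref{thm:bdrydet} goes through verbatim once one drops the assumption that the coefficient $h$ depends only on the basepoint, which is precisely what you do, down to the use of \eqref{eq:I0BottomTerm}, the nonvanishing of $B(\gamma+1,\gamma+1)$ for $\mathrm{Re}(\gamma)>-1$, and strict convexity forcing $\two_y(w)>0$. The bookkeeping subtlety you flag --- that when iterating past the most singular term, higher Taylor coefficients $a_{2\gamma'+1+2p,\ell}$ coming from more singular terms $(\gamma',k')$ whose coefficients vanish on $N_\phi$ but not identically could in principle pollute the coefficient at level $(2\gamma+1,k)$ --- is left equally implicit in the paper, so your treatment is at least as complete as the original.
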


\subsubsection{Mapping properties}

\paragraph{The operator $\adj  I_0$.}
The following result explains why iterative use of $\adj  I_0$ is inherently unstable in terms of fine regularity
properties.

\begin{Corollary} \resp{Let $(M,g)$ be a convex, non-trapping Riemannian manifold of dimension $\ge 2$, and let $\rho$ be a boundary defining function.} For any $k\in \Nm_0$, 
    \begin{align*}
	(\adj  I_0)^k (C^\infty(M)) \subset \sum_{\ell=0}^k (\resp{\rho}\log \resp{\rho})^\ell C^\infty(M),
    \end{align*}   
    and there are functions $f \in C^\infty(M)$ for which $(\adj  I_0)^k f$ include $(\resp{\rho}\log \resp{\rho})^k$ in their expansion.
\end{Corollary}

\begin{proof}
    We prove the following statement by induction on $k\ge 0$. Defining
    \begin{align}
	E_k := \{ (z,p), z\in \Nm_0,\ 0\le p\le \min (z,k) \},
	\label{eq:Ek}
    \end{align}
    if $f$ has index set $E_k$, then $\adj  I_0 f$ has index set $E_{k+1}$. This follows immediately from Theorem \ref{thm:Calphamapping} along with the second case (pole creation) of Theorem \ref{thm:I0starIDX}. The second assertion follows from the explicit formul\ae\ for leading coefficients. 
\end{proof}

Recalling the additional mapping property $\adj  I_0 (\resp{\rho}^{-1/2} C^\infty (M))\subset C^\infty(M)$, we deduce the sequence of spaces, each continuously mapped into the next by $\adj  I_0$: 
\begin{align}
	\resp{\rho}^{-1/2} C^\infty(M) \to C^\infty(M) \to {\cal A}_{\resp{\phg}}^{E_1} (M) \to \dots \to {\cal A}_{\resp{\phg}}^{E_k} (M) \to \dots,
\label{eq:seqspaces}
\end{align}
where $E_k$ is defined in \eqref{eq:Ek}.  It is natural to wonder if there is a space which extends the sequence to the left, i.e., which maps to $\resp{\rho}^{-1/2} C^\infty(M)$. If it were to exist, such a space would {\it not} contain only polyhomogeneous functions since 
\begin{itemize}
    \item Using \eqref{eq:combinatorics} below, we see that $I_0 \rho^z = \tau^{2z+1} \int_0^1 (u(1-u))^z F^z\ du$, which makes sense only when $Re(z)>-1$. 
    \item With this integrability condition in mind, $\adj  I_0 \rho^z$ has index set contained in $(\Nm_0 \times \{0\})\, \overline{\cup} \, ((z+\Nm)\times \{0\})$, and the minimum index has real part $0$. In particular, $\adj  I_0$ can never generate terms which blow-up near $\partial M$. 
\end{itemize}

\paragraph{The operator $\adj  \, \tau^{-1} I_0$.}
We now discuss another example which shows that the various cases in Theorem \ref{thm:I0starIDX} can interact `cyclically' (in contrast to \eqref{eq:seqspaces}). 

\begin{Corollary}
Let $(M,g)$ be a convex, non-trapping Riemannian manifold. Then 
\begin{align*}
C^\infty(M) + \log\rho\ C^\infty(M) &\overset{\adj  \, \tau^{-1} I_0} \longrightarrow  \ldots\\
&C^\infty(M) + \rho^{1/2} C^\infty (M)  \overset{\adj  \, \tau^{-1} I_0}\longrightarrow C^\infty(M) + \log\rho\ C^\infty(M).
\end{align*}    
\end{Corollary}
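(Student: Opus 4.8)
The plan is to chase index sets through the two-step composition $\adj\,\tau^{-1}I_0$, applying Theorem \ref{thm:Calphamapping} to $I_0=I^1$ and Theorem \ref{thm:I0starIDX} to $\adj$, while carefully tracking the \emph{parity} of the powers of the geodesic length $\tau$ that occur near the glancing boundary. This parity is exactly what decides whether Theorem \ref{thm:I0starIDX} operates in its ``pole cancellation'' or ``pole creation'' regime, and it is what makes the two spaces in the statement cycle into one another.

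\textbf{The left arrow.} Let $f\in C^\infty(M)+\log\rho\,C^\infty(M)$; pulled back to $SM$ it lies in $\A^E_{\phg}$ with $E=\{(z,k):z\in\Nm_0,\ 0\le k\le 1\}$ and $\inf(E)=0>-1$, so Theorem \ref{thm:Calphamapping} (with $\phi\equiv 1$ and $h$ independent of the fibre variable) applies. It gives $I_0f\in\A^F_{\phg}(\partial_+ SM)$ with $F\subseteq\{(2z+1,\ell):(z,\ell)\in E\}$; since the $\rho$-exponents of $f$ are integers, the $\tau$-exponents of $I_0f$ near $\partial_0 SM$ are \emph{odd} integers, with $\log$-powers $\le 1$. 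Multiplying by $\tau^{-1}$ shifts these down by one, so near $\partial_0 SM$ the function $\tau^{-1}I_0f$ has an expansion in $\tau^{2j}(\log\tau)^\ell$ with $j\in\Nm_0$ and $\ell\in\{0,1\}$, and is smooth in the interior. I would then write this function as a finite sum of elementary terms $a(\omega)\tau^{2j}(\log\tau)^\ell\chi(\tau)$ plus a conormal remainder vanishing to high order, apply Theorem \ref{thm:I0starIDX} to each elementary term — all of which have $\gamma=2j\in 2\Nm_0$, hence fall into the first case — and sum. For $\ell=0$ one has $E_{2j,0}=\emptyset$, so these terms contribute only the $\Nm_0$ part, i.e.\ a function smooth up to $\partial M$. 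For $\ell=1$ one has $E_{2j,1}=\{(j+\tfrac12+p,0):p\in\Nm_0\}$: the $\log$ is annihilated and only half-integer exponents $\ge\tfrac12$ remain. Taking the union over $j$, the index set of $\adj\,\tau^{-1}I_0f$ is contained in $\Nm_0\cup(\tfrac12+\Nm_0)$, $\log$-free, so $\adj\,\tau^{-1}I_0f\in C^\infty(M)+\rho^{1/2}C^\infty(M)$.

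\textbf{The right arrow.} Now take $g\in C^\infty(M)+\rho^{1/2}C^\infty(M)$, with index set $E'=\Nm_0\cup(\tfrac12+\Nm_0)$ (all $\log$-powers $0$) and $\inf(E')=0>-1$. Theorem \ref{thm:Calphamapping} gives that the $\tau$-exponents of $I_0g$ near $\partial_0 SM$ lie in $\{(2z+1,0):(z,0)\in E'\}$, whose exponents are $\{2p+1:p\in\Nm_0\}\cup\{2p+2:p\in\Nm_0\}=\{1,2,3,\dots\}$, $\log$-free: the integer and half-integer parts of $E'$ together fill in \emph{all} integer powers $\ge 1$. After multiplication by $\tau^{-1}$, $\tau^{-1}I_0g$ has exponents $\Nm_0$ near $\partial_0 SM$, $\log$-free. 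Applying Theorem \ref{thm:I0starIDX} to the elementary terms $a(\omega)\tau^\gamma\chi(\tau)$, $\gamma\in\Nm_0$, $k=0$: for even $\gamma$ one is in the first case with $E_{\gamma,0}=\emptyset$, a smooth contribution; for odd $\gamma=2m+1$ one is in the second (``pole creation'') case, $E_{2m+1,0}=\{(m+1+p,i):p\in\Nm_0,\ 0\le i\le 1\}$, whose exponents are integers $\ge m+1\ge 1$ carrying a $\log$ of order $\le 1$, i.e.\ a term in $\rho\log\rho\,C^\infty(M)$. Summing, $\adj\,\tau^{-1}I_0g\in C^\infty(M)+\rho\log\rho\,C^\infty(M)\subseteq C^\infty(M)+\log\rho\,C^\infty(M)$, which is the second arrow. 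One can moreover use the explicit leading coefficients \eqref{eq:I0BottomTerm} and \eqref{eq:Agk} — involving $B(-\tfrac12,-\tfrac12)\ne 0$ and a nonvanishing binomial coefficient, together with $\two_y>0$ by convexity — to check that a generic $\log\rho$ component of $f$ does produce a nonzero $\rho^{1/2}$ component, and conversely, so that the two arrows genuinely cycle rather than degenerate.

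\textbf{Main obstacle.} The crux is the parity bookkeeping just described: the statement fails if one is careless about whether $I_0$ sends $\rho^\gamma$ to an odd or an even power of $\tau$, since that is precisely what selects the first versus the second case of Theorem \ref{thm:I0starIDX}, and hence whether a $\log$ is destroyed (driving $\log\rho\mapsto\rho^{1/2}$) or created (driving $\rho^{1/2}\mapsto\rho\log\rho$). A secondary, routine point is to make the reduction from the elementary-term statements of Theorems \ref{thm:Calphamapping}--\ref{thm:I0starIDX} to arbitrary polyhomogeneous inputs rigorous, i.e.\ to split off a high-order conormal remainder and verify that it contributes within the claimed spaces; this follows from the continuity of $I_0$, multiplication by $\tau^{-1}$, and $\adj$ on the relevant conormal and polyhomogeneous Fr\'echet spaces.
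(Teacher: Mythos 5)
Your proposal is correct and follows exactly the route the paper intends: the paper's own justification is the one-line remark that the first arrow annihilates logs via pole cancellation (first case of Theorem \ref{thm:I0starIDX}) and the second re-creates them via pole creation (second case), and your index-set and parity bookkeeping — odd $\tau$-exponents from integer $\rho$-exponents, even $\tau$-exponents from half-integer ones, shifted by $\tau^{-1}$ — is precisely the detailed version of that argument. The computations check out, so nothing further is needed.
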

The first map annihilates the log terms through pole cancellation (first case of Theorem \ref{thm:I0starIDX}), while the second map re-creates them (second case of Theorem \ref{thm:I0starIDX}).

\paragraph{Singularly weighted normal operators.}

There is already a precedent for finding (singularly) weighted versions for the Euclidean Radon transform where the Singular Value Decomposition 
is sometimes computable, see e.g.\, \cite{Louis1984}. The two-dimensional examples there, applied to the X-ray transform, motivate a more 
general class of singularly weighted functional settings. 

\begin{Proposition}\label{prop:weightedI0}
Let $(M,g)$ be a convex, non-trapping Riemannian manifold with boundary and bdf $\rho$, let $\bt$ an $\alpha$-bdf on $\partial_+ SM$ and fix $\gamma\in\Rm$. Then 
for $\delta<2\gamma+1$, the operator $I_0 \rho^\gamma$, which sends $f$ to $I_0 (\rho^\gamma f)$, yields a bounded map
\begin{align}
I_0 \rho^{\gamma} \colon L^2(M, \rho^{\delta}\dM) \to L^2(\partial_+ SM, \bt^{2\delta-4\gamma-1}\,  \mu \, d\Sigma^{2d-2}). 
\label{eq:weightedI0}
\end{align}
\end{Proposition}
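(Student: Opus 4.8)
The plan is to reduce the claimed $L^2$-boundedness of the weighted operator $I_0 \rho^\gamma$ to the known unweighted mapping property $I_0 \colon L^2(M,\dVol_g) \to L^2(\partial_+ SM, \mu\, d\Sigma^{2d-2})$ stated in the introduction, by absorbing the weights into the operator via a conjugation argument. Concretely, a function $f \in L^2(M, \rho^\delta\, \dVol)$ corresponds to $\rho^{\delta/2} f \in L^2(M,\dVol)$, so one writes $I_0 \rho^\gamma f = I_0 \rho^{\gamma - \delta/2} (\rho^{\delta/2} f)$, and the claim becomes: $I_0 \rho^{\gamma - \delta/2} \colon L^2(M,\dVol) \to L^2(\partial_+ SM, \bt^{2\delta - 4\gamma - 1}\mu\, d\Sigma^{2d-2})$ is bounded. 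Setting $\beta := \gamma - \delta/2$, the hypothesis $\delta < 2\gamma + 1$ becomes $\beta > -1/2$, and the target weight exponent $2\delta - 4\gamma - 1 = -4\beta - 1$. So the statement to prove is: for $\beta > -1/2$, $I_0 \rho^\beta \colon L^2(M,\dVol) \to L^2(\partial_+ SM, \bt^{-4\beta-1}\mu\, d\Sigma^{2d-2})$ is bounded.

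The key mechanism is that $\rho^\beta$ integrated along a geodesic of length $\tau$ produces a factor comparable to $\tau^{\beta}$ relative to the unweighted integrand, because near the boundary $\rho$ along the geodesic behaves like $\tau^2 u(1-u)$ times a smooth positive factor (the second fundamental form appearing in Theorem \ref{thm:Calphamapping}), and because $\tau$ is itself comparable to an $\alpha$-bdf $\bt$ on $\partial_+ SM$ near the glancing set. Explicitly, using the substitution $I_0 \rho^\beta f = \int_0^\tau \rho^\beta(\varphi_t(x,v)) f(\varphi_t(x,v))\, dt$ and the change of variable $t = u\tau$, one gets $I_0 \rho^\beta f = \tau \int_0^1 (\rho(\varphi_{u\tau}(x,v)))^\beta f(\varphi_{u\tau}(x,v))\, du$, and near glancing $\rho(\varphi_{u\tau}(x,v)) \sim \two_y(w) \tau^2 u(1-u)$ (cf.\ the computation underlying \eqref{eq:I0BottomTerm}), while away from glancing $\tau$ is bounded below and everything is benign. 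Thus $I_0 \rho^\beta f$ is, up to a factor which is bounded above and below by positive constants times $\tau^{2\beta+1} = \bt^{2\beta+1}$ (times the $u$-weight $(u(1-u))^\beta$), the same as a reparametrized average of $f$. The pairing with the weight $\bt^{-4\beta-1}$ in the target then exactly compensates, since $(\bt^{2\beta+1})^2 \bt^{-4\beta-1} = \bt$, recovering (a power of) the $\mu$-weighted measure appearing in the unweighted statement. One must also track the $u$-dependence: after Cauchy--Schwarz in $u$ on $[0,1]$ with the weight $(u(1-u))^\beta$, which is integrable precisely because $\beta > -1/2$, the remaining square-integral of $|f(\varphi_{u\tau}(x,v))|^2 (u(1-u))^\beta$ over $u \in (0,1)$ and $(x,v) \in \partial_+ SM$ with the appropriate measure pushes forward, via the coarea-type Santal\'o formula relating $d\Sigma^{2d-2}\, \mu\, dt$ on $\partial_+ SM \times \Rm$ to $\dVol_g$ on $SM$ (and then $\dVol_g$ on $M$ after integrating the fibres $S_x$), to a constant times $\int_M |f|^2\, \dVol_g$.

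I would carry this out in the following order. First, perform the reduction to $\beta > -1/2$ and the target weight $\bt^{-4\beta - 1}$ described above. Second, recall or cite the Santal\'o formula in the form that makes $I_0^*$ the adjoint of $I_0$ for the stated measures — this is essentially already asserted in the introduction — and more importantly the identity $\|I_0 h\|^2$-type expansions; equivalently, use that $\|I_0 h\|^2_{L^2(\partial_+SM,\mu d\Sigma)} \le C\|h\|^2_{L^2(M)}$ is given. Third, establish the pointwise/weighted comparison: on $\partial_+ SM$, $\tau$ is comparable to any fixed $\alpha$-bdf $\bt$ (since $\tau$ vanishes exactly, and to first order in the appropriate blown-up sense, at $\partial_0 SM$; this comparability is part of the setup in Lemma \ref{lem:scatcoords} and the $\alpha$-bdf formalism), so $\tau^{2\beta+1} \asymp \bt^{2\beta+1}$. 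Fourth, apply Cauchy--Schwarz in the $u$-variable against the Beta-type weight $(u(1-u))^\beta$, using $\beta > -1/2 \iff B(\beta+1,\beta+1) < \infty$, to bound $|I_0\rho^\beta f(x,v)|^2 \le C\, \tau^{4\beta+2}\int_0^1 |f(\varphi_{u\tau}(x,v))|^2 (u(1-u))^\beta\, du$ (modulo the smooth positive factor from $\two$ and from the comparison $\rho \asymp \tau^2 u(1-u)$ near glancing, which stays bounded on the compact $\partial_+SM$). Fifth, integrate against $\bt^{-4\beta-1}\mu\, d\Sigma^{2d-2}$, cancel $\tau^{4\beta+2}\bt^{-4\beta-1} \asymp \tau$, and recognize the resulting integral $\int_{\partial_+SM}\int_0^1 |f(\varphi_{u\tau}(x,v))|^2 (u(1-u))^\beta \tau\, du\, \mu\, d\Sigma$ as, after the change of variable $t = u\tau$ and Santal\'o, bounded by $C \sup(u(1-u))^\beta \cdot \int_M |f|^2 \dVol$ — here one uses that $(u(1-u))^\beta$ is bounded when $\beta \ge 0$ and, when $-1/2 < \beta < 0$, that its contribution is still integrable along each geodesic, so the Santal\'o pushforward absorbs it into a finite constant.

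The main obstacle I expect is handling the behavior uniformly near the glancing set $\partial_0 SM$, where $\tau \to 0$: one must make precise the comparison $\rho(\varphi_{u\tau}(x,v)) \asymp \two_y(w)\, \tau^2\, u(1-u)$ \emph{uniformly in $u \in [0,1]$} and with two-sided bounds, not just the leading-order asymptotic used in Theorem \ref{thm:Calphamapping}; this requires knowing that the geodesic stays close to the boundary for all $u$ when $\tau$ is small and that strict convexity gives a uniform lower bound on the relevant curvature quantity, so that $\rho$ never dips faster than $u(1-u)$ allows. Away from glancing ($\tau$ bounded below on a compact set) there is no issue since $\rho^\beta$ is then an honest bounded multiplier composed with the bounded operator $I_0$. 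A secondary technical point is that when $-1/2 < \beta < 0$ the weight $(u(1-u))^\beta$ blows up at the endpoints $u = 0, 1$, i.e.\ near $\partial M$; one must check that after the substitution $t = u\tau$ this singularity is integrable along each geodesic and, crucially, that the Santal\'o change of variables does not concentrate mass there — but since $\rho$ itself (to which $f$'s square is integrated against after the pushforward is just $\dVol_g$, with no extra singular weight) this is fine, and the endpoint singularity is exactly the borderline integrable one guaranteed by $\beta > -1/2$. I would flag these two points as the places where genuine care (rather than routine computation) is needed.
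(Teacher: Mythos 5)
Your overall strategy is the paper's: substitute $t=u\tau$, use the identity $\rho\circ\Upsilon=\bt^{2}u(1-u)F$ with $F$ smooth and positive on the compact set $\partial SM\times[0,1]$ (Proposition \ref{prop:combinatorics2}, which already gives the uniform two-sided comparison you flag as the "main obstacle"), apply Cauchy--Schwarz in $u$, and finish with Santal\'o. But there is a genuine gap in how you split the Cauchy--Schwarz, and it breaks the proof in exactly the regime $-1/2<\beta<0$ (i.e.\ $2\gamma<\delta<2\gamma+1$). You put one copy of $(u(1-u))^{\beta}$ on the "constant" factor and keep the other copy on the $|f|^{2}$ factor, arriving at
\begin{align*}
\int_{\partial_+SM}\int_0^1 |f(\varphi_{u\tau})|^{2}\,(u(1-u))^{\beta}\,\tau\,du\,\mu\,d\Sigma^{2d-2}
=\int_{SM}\Bigl(\tfrac{T}{\tau^{2}}\Bigr)^{\beta}|f|^{2}\,d\Sigma^{2d-1},
\end{align*}
where $T/\tau^{2}=u(1-u)\asymp\rho/\tau^{2}$ at the corresponding interior point. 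For $\beta<0$ this weight is \emph{unbounded} (it blows up like $\rho^{\beta}$ near $\partial M$ along non-glancing directions, where $\tau$ is bounded below), so it is not dominated by $C\int_M|f|^{2}\,\dVol$: take $f$ concentrated in a collar $\{\rho<\epsilon\}$ and the right side grows like $\epsilon^{\beta}$. Your stated justification --- that the endpoint singularity is "integrable along each geodesic, so the Santal\'o pushforward absorbs it into a finite constant" --- conflates integrability of the weight (which controls the other Cauchy--Schwarz factor) with boundedness of the weight (which is what you would need here); the pushforward cannot absorb an unbounded multiplier. Relatedly, your equivalence "$\beta>-1/2\iff B(\beta+1,\beta+1)<\infty$" is off: that Beta integral converges iff $\beta>-1$.

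The fix is to make the split asymmetric, which is precisely what the paper does: put the \emph{entire} weight on the explicitly computable factor, i.e.\ bound
$\bigl(\int_0^1 (f\rho^{\gamma})\circ\Upsilon\,du\bigr)^{2}\le\bigl(\int_0^1 (f^{2}\rho^{\delta})\circ\Upsilon\,du\bigr)\bigl(\int_0^1 \rho^{2\gamma-\delta}\circ\Upsilon\,du\bigr)$.
The second factor equals $\bt^{4\gamma-2\delta}$ times a quantity controlled by $B(2\gamma-\delta+1,2\gamma-\delta+1)$, finite exactly when $\delta<2\gamma+1$ (this is where the hypothesis enters, and it is $B(2\beta+1,2\beta+1)$, not $B(\beta+1,\beta+1)$, that must converge); the first factor, after multiplying by $\tau$ and applying Santal\'o, is exactly $\int_{SM}f^{2}\rho^{\delta}\,d\Sigma^{2d-1}$ with no leftover singular weight. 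With this change your argument closes; the preliminary conjugation by $\rho^{\delta/2}$ is harmless but unnecessary.
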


\F{The proof of Proposition \ref{prop:weightedI0} is given Section \ref{sec:proofmainXray}.} A special case of Proposition \ref{prop:weightedI0} is when $\resp{\delta} = \gamma > -1$. In this case,
\begin{align*}
    I_0 \rho^{\gamma} \colon L^2(M, \rho^{\gamma}\dM) \to L^2(\partial_+ SM, \bt^{-2\gamma-1} \, \mu \, d\Sigma^{2d-2}),
\end{align*}
is bounded, \resp{and its adjoint relative to these Hilbert spaces is $\adj  \bt^{-2\gamma-1}$, as the following computation shows, valid for all $f\in C^\infty(M)$, $h \in C^\infty(\partial_+ SM)$:
\begin{align*}
	\int_{\partial_+ SM} (I_0 \rho^\gamma f) h \bt^{-2\gamma-1} \mu\ d\Sigma^{2d-2} = \int_{M} \rho^\gamma f \adj (h \bt^{-2\gamma-1}) \dM.
\end{align*}
Above we have used that $I_0^\sharp$ is the $L^2(M,\dM)\to L^2(\partial_+ SM, \mu d\Sigma^{2d-2})$ adjoint of $I_0$ (see e.g. \cite[Lemma 4.1.4]{Paternain2021}).} In particular, the operators 
\begin{align*}
    \adj  \bt^{-2\gamma-1} I_0 \rho^\gamma \colon  L^2(M, \rho^{\gamma}\dM) \to  L^2(M, \rho^{\gamma}\dM), \qquad \gamma > -1
\end{align*}
are all self-adjoint. Moreover, these operators are {\em isomorphisms} of $C^\infty(M)$ in the following cases: 
\begin{itemize}
    \item $\gamma = 0$, $(M,g)$ a simple geodesic disk of constant curvature and $\bt = \mu$ (in circularly symmetric cases, $\mu$ is an $\alpha$-bdf), cf.\ \cite{Monard2019a}.
    \item $\gamma = -1/2$ and $(M,g)$ is a simple surface \resp{(as formulated in \eqref{eq:MNPresult}), cf. \cite{Monard2019}}. 
    \item \F{$\gamma>-1$ and $(M,g)$ is a simple geodesic disk in a constant curvature model, cf. \cite{Mishra2022}.}
\end{itemize}

From Theorem \ref{thm:Calphamapping}, we deduce the following 
\begin{Lemma}
    For any $\gamma>-1$, any bdf $\rho$ and $\alpha$-bdf $\bt$, the operator $\adj  \bt^{-2\gamma-1} I_0 \rho^\gamma$ maps $C^\infty(M)$ into itself.
\end{Lemma}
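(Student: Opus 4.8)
The plan is to chain together the two main theorems of the paper, taking care that the intermediate index set produced by $I_0\rho^\gamma$ on a smooth input lands in the regime where Theorem~\ref{thm:I0starIDX} applies and produces a $C^\infty$ output. First I would observe that for $f\in C^\infty(M)$, the function $\rho^\gamma f$ lies in $\A^{E}_{\phg}(M)$ with $E = (\gamma+\Nm_0)\times\{0\}$, which has $\inf(E)=\gamma>-1$. Hence Theorem~\ref{thm:Calphamapping} applies (with weight $\phi\equiv1$ and $h$ depending only on the basepoint) and shows $I_0(\rho^\gamma f)\in\A^{F}_{\phg}(\partial_+SM)$ with $F$ a $C^\infty_\alpha$-index set contained in $\{(2z+1,0): z\in\gamma+\Nm_0\} = (2\gamma+1+2\Nm_0)\times\{0\}$. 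In particular every exponent appearing has real part $\geq 2\gamma+1$, and the expansion is even with respect to the $\alpha$-bdf $\bt$ in the sense of Section~\ref{sec:indexSets}.

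Next I would apply $\bt^{-2\gamma-1}$: since $\bt$ is an $\alpha$-bdf, multiplication by $\bt^{-2\gamma-1}$ shifts the index set $F$ to $F' := F - (2\gamma+1) \subset 2\Nm_0\times\{0\}$, i.e., a $C^\infty_\alpha$-index set all of whose exponents are nonnegative even integers. This is the key point where the parity/evenness structure is used: after the shift the leading exponent is exactly $0$ (not some negative or fractional number), and there are no log terms. Now I would feed each elementary term $a(\omega)\bt^{2p}$ of this expansion into Theorem~\ref{thm:I0starIDX} with $\gamma_{\mathrm{there}} = 2p \in 2\Nm_0$ and $k=0$. The first case of that theorem (pole cancellation) states precisely that $E_{2p,0} = \emptyset$, so $\adj(a(\omega)\bt^{2p}\chi)$ has index set contained in $\Nm_0\times\{0\}$, i.e., is smooth up to the boundary. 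Summing over $p$ and invoking the Fréchet-space continuity of the pushforward operation, we conclude $\adj\,\bt^{-2\gamma-1}I_0\rho^\gamma f\in C^\infty(M)$.

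The main obstacle, and the reason the statement is nontrivial rather than a one-line corollary, is ensuring that the parity structure is genuinely preserved through all three operations so that the pole-cancellation case of Theorem~\ref{thm:I0starIDX} is the one that applies. Concretely, one must check: (1) that $I_0\rho^\gamma$ applied to a basepoint-only function indeed produces an expansion in even powers of the appropriate $\alpha$-bdf with no odd powers and no logs — this is the content of Theorem~\ref{thm:Calphamapping} together with the fact that $h$ is $v$-independent, but one should verify the relevant bdf on $\partial_+SM$ is of $\alpha$-type; (2) that $\bt^{-2\gamma-1}$, being an $\alpha$-bdf power with the right exponent, maps $C^\infty_\alpha$-index sets to $C^\infty_\alpha$-index sets and shifts $\inf$ by exactly $-(2\gamma+1)$; and (3) that "$\gamma_{\mathrm{there}}\in 2\Nm_0$ and $k=0$" really does land in the first bullet of \eqref{eq:Egamk}. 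Once these bookkeeping facts are in place the result follows by linearity and continuity; I would also remark that this is consistent with, and reproves abstractly, the two boundary cases listed after Proposition~\ref{prop:weightedI0} ($\gamma=0$ and $\gamma=-1/2$, the latter via a limiting/continuity argument in $\gamma$ since $-1/2\notin\Nm_0$ falls in the third case with $k=0$, and $B(1/2,1/2)=\pi\neq\infty$ but the leading exponent $\tfrac{\gamma+1}{2}=\tfrac14$... — actually for $\gamma=-1/2$ one checks directly that the third case still yields a $C^\infty$ output because the relevant index set, after combining with $\Nm_0$, simplifies).
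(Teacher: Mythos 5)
Your argument is correct in substance and agrees with the paper's proof for the first two steps (index set $\gamma+\Nm_0$ for $\rho^\gamma f$, then Theorem \ref{thm:Calphamapping} giving $I_0(\rho^\gamma f)\sim \bt^{2\gamma+1}\sum_{p\ge 0}\bt^{2p}a_p(\omega)$, so that $\bt^{-2\gamma-1}I_0(\rho^\gamma f)$ has $C_\alpha^\infty$-index set $2\Nm_0\times\{0\}$), but it diverges at the final step. The paper simply observes that this last function lies in $C_{\alpha,+}^\infty(\partial_+SM)$ and invokes the Pestov--Uhlmann mapping property \eqref{eq:PU} to conclude $\adj$ of it is smooth; you instead feed each elementary term $a_p(\omega)\bt^{2p}$ into Theorem \ref{thm:I0starIDX} and use the pole-cancellation case $E_{2p,0}=\emptyset$. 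Both routes are valid --- indeed the paper itself remarks, in Case 1 of the proof of Theorem \ref{thm:I0starIDX}, that pole cancellation ``justifies that $\adj(C_{\alpha,+}^\infty)\subset C^\infty(M)$,'' so you are reproving \eqref{eq:PU} on the fly rather than citing it. What the paper's route buys is brevity and the avoidance of any resummation issue; what yours buys is self-containedness within the paper's Mellin machinery and, in principle, explicit leading coefficients. The price of your route is the bookkeeping you gesture at with ``Fr\'echet-space continuity'': the expansion $\sum_p a_p\bt^{2p}$ is only asymptotic, so you must split off a finite partial sum, control $\adj$ on the conormal remainder in $\A^{2N}$ (and on the part of $g$ supported away from $\partial_0 SM$, which is where the nonlocal $\Nm_0$ part of \eqref{eq:Egamk} lives), and then let $N\to\infty$; this is standard but should be said. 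Two minor points: you should justify that a polyhomogeneous function on $\partial_+SM$ with index set $2\Nm_0\times\{0\}$ relative to an $\alpha$-bdf genuinely has an even Taylor expansion at $\partial_0 SM$ (this is Lemma \ref{lem:bdfexpansion} plus the coordinate-invariance Proposition, which you correctly flag), and your closing aside about recovering the $\gamma=-1/2$ case is garbled and should be dropped --- for $\gamma=-1/2$ the isomorphism statement \eqref{eq:MNPresult} concerns the input space $d^{-1/2}C^\infty(M)$ and is not a consequence of this lemma.
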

\begin{proof} If $f\in C^\infty(M)$, then for $\gamma>-1$, $\rho^\gamma f$ has index set $(\gamma + \Nm_0)\times \{0\}$ and by 
Theorem \ref{thm:Calphamapping}, $I_0 (\rho^\gamma f) \sim \bt^{2\gamma+1} \sum_{p\ge 0} \bt^{2p} a_p(\omega)$ for some coefficients
$a_p\in C^\infty(\partial_0 SM)$. Thus $\bt^{-2\gamma-1} I_0 (\rho^\gamma f)\in C_{\alpha,+}^\infty(\partial_+ SM)$. Applying $\adj$ 
and using \eqref{eq:PU}, we obtain the result.
\end{proof}

It is natural to ask whether this is an isomorphism for all $\gamma>-1$. 
\begin{Conjecture} \label{conj}
	Let $(M,g)$ be a simple Riemannian manifold with bdf $\rho$\F{, and fix $\gamma>-1$. Then,} there exists an $\alpha$-bdf $\bt$ for $\partial_+ SM$ such that the operator $\adj  \bt^{-2\gamma-1} I_0 \rho^\gamma$ is an isomorphism of $C^\infty(M)$. 
\end{Conjecture}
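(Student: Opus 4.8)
\emph{Setup and strategy.} Write $N_\gamma := \adj\,\bt^{-2\gamma-1}I_0\rho^\gamma$. By Proposition \ref{prop:weightedI0} and the identification of adjoints given there, $N_\gamma = A^\ast A$ with $A = I_0\rho^\gamma \colon L^2(M,\rho^\gamma\,\dVol)\to L^2(\partial_+SM,\bt^{-2\gamma-1}\mu\,d\Sigma^{2d-2})$; hence $N_\gamma$ is self-adjoint and nonnegative, and $\ker N_\gamma=\ker A$. For $\gamma>-1$ multiplication by $\rho^\gamma$ carries $L^2(M,\rho^\gamma\,\dVol)$ (in particular $C^\infty(M)$) into $L^1(M)$, on which $I_0$ is injective for a simple manifold, so $N_\gamma$ is injective on $C^\infty(M)$. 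By the Lemma preceding the conjecture, $N_\gamma(C^\infty(M))\subset C^\infty(M)$, so the statement reduces to showing that $N_\gamma$ is surjective onto $C^\infty(M)$ with continuous inverse, i.e.\ a Fr\'echet-space isomorphism. The plan is to construct a parametrix for $N_\gamma$ in a pseudodifferential calculus adapted to the boundary behaviour described by Theorems \ref{thm:Calphamapping} and \ref{thm:I0starIDX}, deduce the Fredholm property on a suitable scale of spaces whose intersection is $C^\infty(M)$, and then combine injectivity with self-adjointness (which forces vanishing of the cokernel) to conclude that $N_\gamma$ is invertible on each space of the scale, hence on $C^\infty(M)$.

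\emph{Construction of the parametrix.} In $M^{\inte}$ the operator $N_\gamma$ is a classical pseudodifferential operator of order $-1$ (the X-ray transform having order $-\tfrac12$), elliptic with principal symbol a nonzero multiple of $|\xi|_g^{-1}$; this gives the interior piece of the parametrix and interior regularity of $N_\gamma^{-1}$. The real work is the \emph{normal operator} of $N_\gamma$ at $\partial M$. It is read off from the main theorems: applying Theorem \ref{thm:Calphamapping} to $\rho^\gamma\cdot C^\infty(M)$ and then Theorem \ref{thm:I0starIDX}, the leading part of $N_\gamma$ at $\partial M$ is the fibre integral $a\mapsto\int_{S_y(\partial M)}a(y,w)\,\two_y(w)^{-(\gamma+1)/2}\,dS_y(w)$, restricted to $y\in\partial M$, multiplied by the constants $c_{z_0,\ell_0}$ of \eqref{eq:Agk} (a value $B\!\left(-\tfrac{\gamma+1}{2},-\tfrac{\gamma+1}{2}\right)$ of the Beta function when $\gamma\notin\Nm_0$, binomial or logarithmic prefactors in the integer cases). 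Strict convexity gives $\two_y(w)>0$, and for $\gamma>-1$ the relevant prefactor is nonzero precisely off the pole-cancellation locus of Theorem \ref{thm:I0starIDX}; one then selects the $\alpha$-bdf $\bt$ --- calibrated against $\rho^\gamma$ near the glancing set $\partial_0SM$, in the way that $\bt=\mu$ functions in the circularly symmetric constant-curvature case --- so that the leading coefficient of $\bt^{-2\gamma-1}I_0\rho^\gamma$ is constant along $\partial_0SM$ and the normal operator of $N_\gamma$ is scalar, hence invertible. Patching its inverse to the interior parametrix yields $Q$ with $QN_\gamma=\mathrm{Id}-R$ and $N_\gamma Q=\mathrm{Id}-R'$, $R,R'$ smoothing on the boundary scale (a transmission scale of $-\tfrac12(\gamma+1)$-type in the sense of \cite{Monard2017}, or the degenerate-elliptic scale $\wtH^k(M)$ of \cite{Monard2019a}); this gives the Fredholm property, and the first paragraph then upgrades $N_\gamma$ to an isomorphism of $C^\infty(M)$.

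\emph{Main obstacles.} The principal difficulty is setting up the correct calculus at $\partial M$: by Theorem \ref{thm:I0starIDX} the operator $\adj\,\bt^{-2\gamma-1}I_0\rho^\gamma$ generically produces half-integer powers $\rho^{m+1/2}$ and logarithmic terms, so $N_\gamma$ is not an element of the $b$-, scattering-, or standard H\"ormander transmission calculus; one must either formulate a fractional transmission condition of the appropriate type and verify it, or build a tailored calculus in which these polyhomogeneous asymptotics are propagated and compositions and a parametrix close. The second obstacle, genuinely new away from constant curvature, is proving invertibility of the normal operator for an arbitrary simple metric, uniformly in $y\in\partial M$: it is no longer given by an exactly solvable hypergeometric ODE, so one would have to argue by deformation in $\gamma$ (anchored at the known cases $\gamma=0$ on constant-curvature disks and $\gamma=-\tfrac12$ on simple surfaces) or by conjugation to a model. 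Finally, the three regimes $\gamma\in2\Nm_0$ (pole cancellation), $\gamma\in2\Nm_0+1$ (pole creation) and $\gamma\notin\Nm_0$ of Theorem \ref{thm:I0starIDX} give structurally different normal operators, so the construction of $\bt$ --- and possibly the ambient calculus --- must be adapted to the regime of $\gamma$; the borderline integer values of $\gamma$, where logarithms enter, are expected to be the most delicate.
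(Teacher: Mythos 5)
The statement you are addressing is labelled a \emph{Conjecture} in the paper: the authors do not prove it, and explicitly flag the two known cases ($\gamma=0$ on constant-curvature disks, $\gamma=-1/2$ on simple surfaces) as the only ones where the isomorphism property is established. Your submission is therefore being measured not against a proof in the paper but against the standard of being a proof at all, and it is not one --- it is a plausible research outline whose decisive steps are left open, as you yourself acknowledge in the ``Main obstacles'' paragraph. Concretely: (1) you never construct the $\alpha$-bdf $\bt$ whose existence is the whole content of the statement; saying it should be ``calibrated against $\rho^\gamma$ near the glancing set'' so that a leading coefficient becomes constant is not a construction, and ``the normal operator is scalar, hence invertible'' is not a valid inference --- the normal operator is an operator on a model space at $\partial M$, not a number, and making one coefficient of an asymptotic expansion constant does not trivialize it. (2) The parametrix construction at the boundary is precisely the missing mathematics: Theorem \ref{thm:I0starIDX} shows the operator generically creates $\rho^{m+1/2}$ and $\log\rho$ terms, so $N_\gamma$ does not lie in any calculus in which composition and parametrix inversion are currently known to close; you name this obstacle but do not overcome it. (3) The reduction in your first paragraph (injective $+$ self-adjoint $+$ Fredholm $\Rightarrow$ isomorphism) is only sound once the Fredholm property is established on a scale of spaces compatible with the weighted $L^2$ self-adjointness \emph{and} whose intersection is $C^\infty(M)$, together with elliptic-type boundary regularity showing that $N_\gamma u \in C^\infty(M)$ forces $u\in C^\infty(M)$; this last point is exactly where the conjecture lives, since a priori the inverse could reintroduce half-integer powers and logarithms.

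To be clear, nothing in your outline is wrong as a strategy, and it is consistent with how the known cases were handled (the degenerate-elliptic scale $\wtH^k$ of \cite{Monard2019a} for $\gamma=0$, the transmission spaces of \cite{Monard2017} for $\gamma=-1/2$). But a proof must supply the choice of $\bt$, the boundary calculus, and the invertibility of the resulting normal operator for a general simple metric and general $\gamma>-1$; absent those, the statement remains a conjecture.
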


It is easily seen that if the $C^\infty(M)$-isomorphism property holds, it holds independently of the choice of bdf $\rho$. However, its dependence on the choice of $\alpha$-bdf $\bt$ (except for $\gamma=-1/2$, where no $\alpha$-bdf appears) remains unclear. 

Unless $\gamma= -1/2$, the operator here is not extendible across $\del M$, and thus we cannot invoke the transmission conditions of Boutet de Monvel. 

\resp{

\paragraph{Structure of the article.} The remainder of the article is organized as follows. 

In Section \ref{sec:doubleb}, we first carry out the desingularization of the double fibration into a double b-fibration (see Lemma \ref{lem:bfibrations}) and show in Lemma \ref{lem:Irewrite} that the operators $I_0$ and $I_0^\sharp$ can be written as weighted compositions of pushforwards and pullbacks by these b-fibrations. As a necessary ingredient to our main theorems, this justifies that $I_0$ and $I_0^\sharp$ map conormal spaces to conormal spaces. 

In Section \ref{sec:glancingCoords}, we construct special coordinates on $\partial_+ SM$ with good parity properties with respect the scattering relation (an important tool in the proof of Theorem \ref{thm:I0starIDX}), before covering $C_\alpha^\infty$-index sets and polyhomogenous spaces on $\partial_+ SM$ in detail in Section \ref{sec:indexSets}.

Section \ref{sec:proofmain} then covers the proofs of the main results: the proofs of Theorem \ref{thm:Calphamapping} and Proposition \ref{prop:weightedI0} on forward mapping properties of $I_0$ are given Section \ref{sec:proofmainXray}; after a digression on the Mellin transform and functionals generalizing the Beta function in Section \ref{sec:MellinBeta}, we then prove Theorem \ref{thm:I0starIDX} in Section \ref{sec:proofbackprojection}; Sections \ref{sec:BetaFacts} and \ref{sec:volumeFactors} then cover proofs of auxiliary lemmas (on properties of Beta functionals, and on Sasaki volume factors) stated during the proof of Theorem \ref{thm:I0starIDX}.

Finally, in Appendix \ref{sec:PFT}, we explain how the use of Melrose's pushforward and pullback theorems can be used to derive mapping properties akin to Theorems \ref{sec:proofmainXray} and \ref{thm:I0starIDX}, and discuss how such an approach overestimates the resulting index sets. 

}

\section{The double $b$-fibration} \label{sec:doubleb}

\subsection{Preliminaries: $b$-fibrations on manifolds with corners.} \label{sec:bfib}

A key theme in this paper is that the basic maps that play a role in the analysis of the X-ray transform $I_0$
(and related operators) are $b$-fibrations.  We briefly recall the definition of this class of maps and refer to 
\cite{Grieser2001,Melrose1992, Mazzeo1991} for more complete definitions and descriptions of all of the material below. 

Let $M$ and $N$ be two manifolds with corners, and enumerate the boundary hypersurfaces of these two spaces by $\{H_\alpha\}_{\alpha \in A}$
and $\{H'_\beta\}_{\beta \in B}$, respectively.   We say that $\rho_\alpha$ is a boundary defining function (bdf) for $H_\alpha$ if 
$H_\alpha = \rho_\alpha^{-1}(0)$ and $d\rho_\alpha \neq 0$ on that face. A map $F: M \to N$ is called a $b$-map if, for every $\beta \in B$,
\[
F^* \rho_\beta' =  G \cdot \rho_{\alpha_1}^{\ell_1} \ldots \rho_{\alpha_s}^{\ell_s}
\]
for some $C^\infty$ strictly positive function $G$ and some positive integers $\ell_i$.  The set of exponents which occur here, 
as $\beta$ ranges over $B$, is called the geometric data associated to $F$. It is more efficiently represented by the nonnegative
integer valued matrix $( e_F( \alpha, \beta) )_{\alpha\in A, \beta\in B}$, where $F^*\rho_\beta'$ contains the factor $\rho_\alpha^{e_F(\alpha, \beta)}$. 

On a manifold with corners $M$, the space $\mathcal V_b(M)$ of $b$-vector fields can be analogously defined as in Section \ref{sec:prelim}, as the space of all smooth vector fields which are unconstrained in the interior, and which are tangent to all boundaries of $M$. 

There is a natural vector-bundle ${}^b TM$ over $M$, called the $b$-tangent bundle, which has the property that
$\mathcal V_b(M)$ is the full space of smooth sections of ${}^b TM$. The vector fields on the right in \eqref{Vb} constitute a local basis of sections. There is a natural map ${}^b TM \to TM$ which is an isomorphism over the interior. If $p$ lies on a boundary or corner of $M$, then the nullspace of this map at $p$ is called the $b$-normal bundle ${}^b N_p M$. For example, if $M$ is a manifold with boundary, as in Sec. \ref{sec:prelim}, then ${}^bN_p M$ is the $1$-dimensional space spanned by $x\del_x$. If $M$ has corners and $p$ lies on a corner of codimension $k$, then ${}^bN_p M$ is $k$-dimensional, and naturally splits into a sum of $1$-dimensional subspaces corresponding to the $b$-normal spaces of the codimension one boundary faces intersecting at $p$.

If $F:M \to N$ is a $b$-map, then there is a naturally induced bundle map ${}^bF_*: {}^bTM \to {}^bTN$ which agrees with the ordinary differential of $F$ in the interior of $M$.   We say that $F$ is a $b$-submersion if ${}^b F_*$ is surjective at every point; similarly, $F$ is called $b$-normal if the natural restriction ${}^bF_*|_p: {}^bN_p M \to {}^bN_{f(p)} N$ is surjective.  Finally, $F$ is called a $b$-fibration if it is both a $b$-submersion and $b$-normal. 
These last two conditions are equivalent to something more familiar. Following \cite[Def. 3.9]{Grieser2001}, they are
equivalent to 
\begin{itemize}
\item[a)] If $K$ is any boundary face (i.e. hypersurface or corner), then $\text{codim}\, F(K)  \le \text{codim}\, K$; 
\item[b)] The restriction of $f$ to the interior of any boundary face $K$ is a fibration from $K^o$ to $F(K)^o$. 
\end{itemize}

\subsection{The maps $\hpi = \pi\circ\Upsilon$ and $\hF = \foot\circ\Upsilon$} \label{ssec:doubleb} 

\F{If $(M,g)$ is a non-trapping manifold with strictly convex boundary, the first-exit-time function $\tau$ defined in the introduction, when restricted to $\partial_+ SM$, can be extended smoothly to $\partial SM$ by defining 
\begin{align}
    \tilde{\tau}(x,v)=\left\{\begin{array}{rl}
	\tau(x,v), &\;\;(x,v)\in\partial_{+}SM,\\
	-\tau(x,-v), &\;\;(x,v)\in\partial_{-}SM,
    \end{array}\right.
    \label{eq:ttau}
\end{align}
see e.g. \cite[Lemma 3.2.6]{Paternain2021}. This allows to extend the map $\Upsilon$ defined in \eqref{eq:Upsilon} smoothly to}
\[
\Upsilon:\partial SM\times [0,1]\to SM, \qquad \Upsilon(x,v,u):=\varphi_{u\t(x,v)}(x,v).
\]
Note that $\Upsilon(x,v,0)=(x,v)$, $\Upsilon(x,v,1)=\alpha(x,v)$
and $\Upsilon(\alpha(x,v),u)=\Upsilon(x,v,1-u)$.  In other words, if we define $\Gamma:\partial SM\times [0,1]\to\partial SM \times [0,1]$ by
$\Gamma(x,v,u):=(\alpha(x,v),1-u)$, then $\Upsilon\circ \Gamma=\Upsilon$. 
Thus $\Upsilon$ is a 2:1 cover away from $\partial_0 SM\times[0,1]$. Each interval $\{(x,v)\} \times [0,1]$ 
with $(x,v) \in \del_0 SM$ collapses to the point $(x,v)$. 

We now claim that the pullback of the Sasaki volume form on $SM$ is given in terms of the induced volume form on $\partial  SM$ by
\begin{align}
    \Upsilon^* d\Sigma^{2d-1} = \tau\mu\ d\Sigma^{2d-2}\ du,
    \label{eq:pullback}
\end{align}
where, as we recall, $\mu(x,v) = g_x(v, \nu_x)$.  Note that the Jacobian factor vanishes at $\partial_0 SM \times [0,1]$, as expected 
since $\Upsilon$ collapses this submanifold to a lower dimensional one. To prove \eqref{eq:pullback}, suppose $f\in L^1(SM)$ and write
\begin{align*}
\int_{SM} f\ d\Sigma^{2d-1} &= \int_{\Upsilon^{-1} (SM)} (f\circ \Upsilon)\  \Upsilon^* d\Sigma^{2d-1}\\
&= \int_{\partial_+ SM} \int_0^{\tau(x,v)} f(\varphi_t(x,v))\ dt\ \mu \, d\Sigma^{2d-2} \qquad (\text{Santal\'o's formula}) \\
&\!\!\! 
= \int_{\partial_+ SM} \int_0^1 f(\Upsilon(x,v,u)) \ du\ \tau \, \mu \, d\Sigma^{2d-2} \qquad (\text{setting } t = \tau u).
\end{align*}
This establishes the claim.

We consider the map $\Upsilon$ restricted to $D = \partial_+ SM \times [0,1]$ in greater detail. 
As a first step, observe that this domain is a manifold with corners up to codimension two. Its boundary hypersurfaces are
\begin{align*}
G_0 = \partial_+ SM \times \{0\}, \qquad G_1 = \partial_+ SM \times \{1\}, \qquad G_2 = \partial_0 SM \times [0,1]. 
\end{align*}
The corresponding boundary defining functions are $u$, $1-u$ and $\mu$, respectively. The lift of $\tau$ from 
$\partial_+ SM$ to $D$ is also a bdf for $G_2$.  Finally, there are two codimension two corners: 
\begin{align*}
    F_i = G_i \cap G_2 = \partial_0 SM \times \{i\}, \qquad i=0,1.
\end{align*}
Figure \ref{fig:model} gives a schematic of the arrangement of these faces.
\begin{figure}[htpb]
    \centering
    \includegraphics[height=0.18\textheight]{./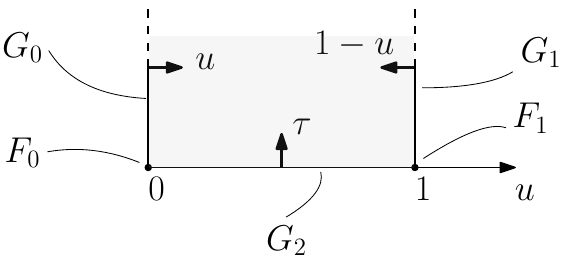}
    \caption{Local boundary model for $\Upsilon^{-1}(SM)$}
    \label{fig:model}
\end{figure}

Observe that
\begin{align*}
    \Upsilon(G_0) = \partial_+ SM, \qquad \Upsilon(G_1) = \partial_- SM,\qquad \Upsilon(G_2) = \partial_0 SM \qquad (\text{blow-down}).
\end{align*}
By \eqref{eq:pullback}, $\Upsilon|_D$ is a diffeomorphism on the interior. If $\rho$ is a bdf for $M$ equal to the geodesic 
distance to $\partial M$ near $\partial M$, then by \cite[Proposition 6.12]{Monard2020a}, 
\begin{align}
    \rho\circ \Upsilon(x,v,u) = F(x,v,u) \tau^2(x,v)u(1-u), \qquad (x,v,u)\in D,
    \label{eq:combinatorics}
\end{align}
where $F$ is a smooth strictly positive function. \resp{It is further proved in \cite[Proposition 6.12]{Monard2020a}} that $F$ extends to a map $F\colon \partial SM\times [0,1]\to \Rm$ such that

\medskip

(a) $F(\alpha(x,v),u) = F(x,v,1-u)$; 

(b) $F(x,v,0) =  \mu(x,v)/\tilde{\tau}(x,v)$ and $F(x,v,1) = \mu(\alpha(x,v))/\tilde{\tau}(\alpha(x,v))$.

(c) $F(x,v,u) = \two_x(v,v)$ when $(x,v,u)\in \partial_0 SM\times [0,1]$.

\medskip

\begin{Lemma}\label{lem:bfibrations}
    The maps $\hpi\colon D\to M$ and $\hF\colon D\to \partial_+ SM$ are $b$-fibrations with geometric data (encoding
the order of vanishing at each boundary face) given by 
    \begin{align*}
	\left.
	\begin{array}{c|c|c|c}
	    e_{\hpi} & G_0 & G_1 & G_2 \\
	    \hline 
	    \partial M & 1 & 1 & 2  
	\end{array}
	\right. \qquad 
	\left.
	\begin{array}{c|c|c|c}
	    e_{\hF} & G_0 & G_1 & G_2 \\
	    \hline 
	    \partial_0 SM & 0 & 0 & 1  
	\end{array}
	\right.
    \end{align*}
    \label{lem:bfibr}
\end{Lemma}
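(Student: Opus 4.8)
The plan is to treat the two maps separately, since they are $b$-fibrations for quite different reasons. For $\hF=F\circ\Upsilon$ the crucial observation is that it is simply the projection $\mathrm{pr}_1\colon\partial_+SM\times[0,1]\to\partial_+SM$ onto the first factor. Indeed, the footpoint map $F$ of \eqref{eq:F} is constant along each maximal geodesic of $SM$ --- every point of a maximal unit-speed geodesic segment has the same initial point and velocity in backward time --- while for $(x,v)\in\partial_+SM$ the curve $u\mapsto\Upsilon(x,v,u)=\varphi_{u\tau(x,v)}(x,v)$ sweeps out exactly the maximal geodesic through $(x,v)$, whose backward exit data is $(x,v)$ itself (because $\tau(x,-v)=0$ when $(x,v)\in\partial_+SM$). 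Hence $\hF(x,v,u)=F(x,v)=(x,v)$ for every $u\in[0,1]$; note that, although $F$ is only continuous on $SM$, the composite $\hF$ is smooth on $D$ --- which is part of the point of the desingularization. Since the projection of a product of manifolds with corners onto a factor is the model $b$-fibration, this step is then finished by recording the evident facts: $\hF^*\mu=\mu\circ\mathrm{pr}_1$ is precisely the chosen bdf of $G_2$ in $D$ and is nonvanishing on $G_0$ and $G_1$, which gives the geometric data $(0,0,1)$; and ${}^b\hF_*$, being fibrewise the projection of ${}^bTD\cong\mathrm{pr}_1^*\,{}^bT(\partial_+SM)\oplus\mathrm{pr}_2^*\,{}^bT[0,1]$ onto the first summand, is surjective (so $\hF$ is a $b$-submersion) and carries the $b$-normal line of $G_2$ isomorphically onto that of $\partial_0SM$ while annihilating those of $G_0$ and $G_1$ (so $\hF$ is $b$-normal).

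\textbf{The $b$-map structure of $\hpi$.} Since $\Upsilon\colon\partial SM\times[0,1]\to SM$ and $\pi\colon SM\to M$ are smooth, $\hpi=\pi\circ\Upsilon$ is smooth. Taking $\rho$ to be the geodesic distance to $\partial M$, the identity \eqref{eq:combinatorics} reads $\hpi^*\rho=\psi\cdot\tau^2 u(1-u)$ for some smooth strictly positive $\psi$, and $u$, $1-u$, $\tau$ are bdf's for $G_0$, $G_1$, $G_2$ respectively; for an arbitrary bdf $\rho$ of $M$ one multiplies $\psi$ by the smooth positive quotient of $\rho$ by the geodesic distance near $\partial M$, while away from $\partial M$ the pullback $\hpi^*\rho$ is nonvanishing. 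Hence $\hpi$ is a $b$-map with geometric matrix $e_{\hpi}(\partial M;G_0,G_1,G_2)=(1,1,2)$, as claimed.

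\textbf{$\hpi$ is a $b$-submersion and $b$-normal.} For this I would use the equivalent criterion a)--b) recalled above (following \cite[Def.~3.9]{Grieser2001}). Using $\Upsilon(G_0)=\partial_+SM$, $\Upsilon(G_1)=\partial_-SM$, and $\Upsilon(G_2)=\Upsilon(F_0)=\Upsilon(F_1)=\partial_0SM$ (the last three because $\Upsilon(x,v,u)=(x,v)$ whenever $(x,v)\in\partial_0SM$), one finds $\hpi(K)=\pi(\Upsilon(K))=\partial M$ for every boundary face $K$ of $D$, and $\hpi(D)=M$; hence a) holds, since $\mathrm{codim}\,\partial M=1$ is at most $\mathrm{codim}\,K$ for each boundary face $K$. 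For b): on $D^o$, $\hpi=\pi\circ\Upsilon|_{D^o}$ with $\Upsilon|_{D^o}$ a diffeomorphism onto $SM^o$ (by \eqref{eq:pullback}) and $\pi$ the unit-sphere bundle projection, hence a fibration onto $M^o$; on $G_0^o$, $\hpi|_{G_0^o}\colon(x,v,0)\mapsto x$ is the open-hemisphere bundle projection $(\partial_+SM)^o\to\partial M$; on $G_1^o$, $\hpi|_{G_1^o}\colon(x,v,1)\mapsto\pi(\alpha(x,v))$ is the open-hemisphere bundle projection $(\partial_-SM)^o\to\partial M$ pre-composed with the scattering diffeomorphism $\alpha$, hence a fibration; and on $G_2^o=\partial_0SM\times(0,1)$ --- as well as on the corners $F_0,F_1$ --- $\hpi$ sends $(x,v,u)$ to $x$, so it is the sphere-bundle projection $\partial_0SM\to\partial M$ pre-composed with a product projection, again a fibration. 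This verifies a) and b), so $\hpi$ is a $b$-submersion and $b$-normal, and the lemma follows.

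\textbf{Anticipated main difficulty.} Once \eqref{eq:combinatorics} and the identification $\hF=\mathrm{pr}_1$ are in hand, the lemma is essentially bookkeeping with no analytic content. The genuinely substantive point is the one in the first paragraph --- that $F$ is constant along geodesics and that $\Upsilon$ only reparametrizes them, so that $\hF=\mathrm{pr}_1$ --- which is exactly what makes $\hF$ a $b$-fibration in spite of $F$ being merely continuous on $SM$. In the argument for $\hpi$ the only place needing a little care is the behaviour over $G_2$ and the corners $F_0,F_1$, where $\Upsilon$ collapses the whole $u$-interval: one must observe that the collapsed map is still a fibration, which holds because it is the sphere-bundle projection $\partial_0SM\to\partial M$ pre-composed with a trivial product projection.
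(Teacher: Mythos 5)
Your proposal is correct and follows essentially the same route as the paper: the geometric data and $b$-map structure of $\hpi$ are read off from \eqref{eq:combinatorics}, the $b$-fibration property is checked face-by-face via the criterion a)--b) of Grieser with the same identifications of $\hpi|_{G_i^o}$, and $\hF$ is recognized as the projection onto the first factor. The only differences are cosmetic: you spell out why $\hF=\mathrm{pr}_1$ and verify its $b$-normality explicitly, and you use $\mu$ rather than the lift of $\tau$ as the bdf for $G_2$, both of which the paper treats as immediate.
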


\begin{proof}   
({\bf First case: $\hpi$}) First note that \eqref{eq:combinatorics} already asserts that $\hpi$ is a $b$-map.
The geometric data for this map is computed directly from \eqref{eq:combinatorics}.  We now check the two 
conditions in the auxiliary characterization of $b$-fibrations \F{stated at the end of Section \ref{sec:bfib}}. Suppose that $K \in \{G_0, G_1, G_2, F_0, F_1\}$ is a boundary face. 

Condition a) is obvious since $\hpi$ maps any such $K$ onto $\del M$, which has codimension $1$. 

As for condition b), first note that $F_0$ and $F_1$ are closed (hence equal their interiors) and each is mapped 
diffeomorphically to $\partial_0 SM$ by $\Upsilon$; these are then mapped to $\partial M$ via the canonical projection 
$\partial_{0} SM = S (\partial M) \to \partial M$, which is a fibration. 

As for the boundary hypersurfaces, consider the map $\overline{\pi\circ\Upsilon} \colon G_i^o \to \partial M$, $i=0,1,2$.  
When $i = 0$, this is the composition 
\begin{align*}
\{ \mu>0\}\times \{0\}\ni (x,v,0) \stackrel{\Upsilon}{\longmapsto} (x,v) \stackrel{\pi}{\longmapsto} x;
\end{align*}
the first map is a diffeomorphism and the second is a fibration.  The analogous reasoning applies when $i = 1$.  Finally, for $i=2$, 
the map $\Upsilon$ projects onto the first factor of $\del_0 SM \times [0,1]$, and $\pi$ projects down to $\partial M$, so this
too is a fibration.

({\bf Second case: $\hF$}) The map $\hF$ is simply the projection map onto the left factor $\partial_+{SM}\times [0,1]\to \partial_+ SM$, and this
is trivially a $b$-fibration. The geometric data follows from the relation $\tau\circ\hF = \tau$. 
\end{proof}    

\subsection{The X-ray transform and the backprojection via pushforwards and pullbacks} \label{sec:compounds}

A smooth map $\phi:N_1\to N_2$ between open manifolds induces the pullback operation
\begin{align*}
    \phi^*\colon C^\infty(N_2) \to C^\infty(N_1), \qquad \phi^* f = f\circ \phi, \qquad f\in C^\infty(N_2). 
\end{align*}
If $\phi$ is proper, then we also have that $\phi^*\colon C_c^{\infty}(N_2)\to C_c^{\infty}(N_1)$.  Furthermore, if $\nu$ a smooth 
density on $N_1$, its pushforward $\phi_*\nu$ is then defined by duality
\begin{align*}
    \langle \phi_* \nu, g \rangle_{N_2} = \langle \nu, \phi^* g \rangle_{N_1}, \qquad g\in C^\infty(N_2),
\end{align*}
where $\langle \nu, f\rangle = \int f \nu$.   If $\phi$ is a submersion, then $\phi_* \nu$ can be regarded
as integration over the fibers of $\phi$, and it is well known in that case that $\phi_*$ maps $C^\infty$ to $C^\infty$. 

We now express $I_0$ and $\adj $ as weighted sequences of pushforwards and pullbacks.  At first we consider these only in the interiors of their domains of definition, but later will investigate how they affect regularity near the boundaries and corners \F{(notably in the proof of Theorem \ref{thm:I0starIDX} given in Section \ref{sec:proofbackprojection})}, which is where the $b$-fibration property will enter. \F{In addition, by virtue of \cite[Theorems 3, 5]{Melrose1992}, Lemma \ref{lem:Irewrite} factors $I_0$ and $\adj$ in a way that it becomes obvious that they map conormal spaces to conormal spaces, a necessary ingredient to our main Theorems \ref{thm:Calphamapping} and \ref{thm:I0starIDX}.}

\begin{Lemma}\label{lem:Irewrite}
Let $(M,g)$ be a convex, non-trapping Riemannian manifold with boundary, and define $\hpi = \pi\circ\Upsilon$, 
$\hF = \foot\circ\Upsilon$ as above. 
Then the maps $I_0, \adj $ defined \eqref{eq:forward}-\eqref{eq:adjoint} can be written as follows: 
    \begin{align}
	    \left(\frac{1}{\tau} I_0 f(x,v) \right)\ d\Sigma^{2d-2} &= \hF_* \left( [ \hpi^* f]d\Sigma^{2d-2} du \right), \qquad f\in C_{\resp{c}}^\infty(M^{o})	\label{eq:I0} \\[0.5ex]
	    (\adj  \resp{h})\ \dM_x  &= \hpi_* \left(\hF^* (\tau\mu \resp{h})\ d\Sigma^{2d-2}\ du\right), \qquad \resp{h}\in C^\infty( (\partial_+ SM)^{o}). \label{eq:I0star}
    \end{align}
\end{Lemma}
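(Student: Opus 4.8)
The plan is to verify the two identities \eqref{eq:I0} and \eqref{eq:I0star} directly by testing against smooth compactly supported functions on the target, using the definition of pushforward by duality together with the Jacobian formula \eqref{eq:pullback} for $\Upsilon$ and the factorization $\Upsilon = \pi\circ(\text{lift})$ and $\Upsilon = F\circ(\text{lift})$ through the diagram in Figure \ref{fig:doublefib}. Throughout, everything happens over the interiors, so $\Upsilon|_D$ is a genuine diffeomorphism onto $SM^{\inte}$ (with the $2{:}1$ cover becoming a $1{:}1$ map once we restrict to $D=\partial_+ SM\times[0,1]$), and we may freely change variables via $\varphi_t(x,v)=\Upsilon(x,v,t/\tau(x,v))$, i.e.\ $t=\tau u$, $dt = \tau\,du$.

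For \eqref{eq:I0star}: fix $\psi\in C_c^\infty(M^{\inte})$ and compute $\langle \hpi_*(\hF^*(\tau\mu g)\,d\Sigma^{2d-2}\,du),\psi\rangle_M = \langle \hF^*(\tau\mu g)\,d\Sigma^{2d-2}\,du,\ \hpi^*\psi\rangle_D$. Since $\hpi = \pi\circ\Upsilon$ and $\hF$ is the projection $\partial_+ SM\times[0,1]\to\partial_+ SM$, this equals $\int_{\partial_+ SM}\int_0^1 \tau(x,v)\,\mu(x,v)\,g(x,v)\,\psi(\pi(\Upsilon(x,v,u)))\,du\,d\Sigma^{2d-2}$. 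Now pull the $u$-integral through: $\int_0^1 \psi(\pi(\Upsilon(x,v,u)))\,\tau(x,v)\,du = \int_0^{\tau(x,v)}\psi(\pi(\varphi_t(x,v)))\,dt$, which is precisely $I_0\psi(x,v)$ — wait, more carefully, it is the integrand of the $L^2$-pairing; using that $\adj$ is the adjoint of $I_0$ with respect to the measures $\dVol_g$ on $M$ and $\mu\,d\Sigma^{2d-2}$ on $\partial_+ SM$, we get $\int_{\partial_+ SM} g\,(I_0\psi)\,\mu\,d\Sigma^{2d-2} = \int_M (\adj g)\,\psi\,dM_x = \langle (\adj g)\,dM_x,\psi\rangle_M$. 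Since $\psi$ was arbitrary, \eqref{eq:I0star} follows. (One should double-check the role of $F$ versus $\Upsilon(\cdot,u)$: because $g$ only depends on the $\partial_+ SM$ factor, $\hF^*(\tau\mu g)(x,v,u) = \tau(x,v)\mu(x,v)g(x,v)$ independently of $u$, which is what makes the $u$-integral land on $\psi$ alone; this is the content of the way the diagram commutes up to the collapse of $G_2$.)

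For \eqref{eq:I0}: fix $\chi\in C_c^\infty((\partial_+ SM)^{\inte})$ and compute $\langle \hF_*([\hpi^*f]\,d\Sigma^{2d-2}\,du),\chi\rangle = \langle [\hpi^*f]\,d\Sigma^{2d-2}\,du,\ \hF^*\chi\rangle_D = \int_{\partial_+ SM}\int_0^1 f(\pi(\Upsilon(x,v,u)))\,\chi(x,v)\,du\,d\Sigma^{2d-2}$. The inner $u$-integral equals $\frac{1}{\tau(x,v)}\int_0^{\tau(x,v)} f(\pi(\varphi_t(x,v)))\,dt = \frac{1}{\tau(x,v)} I_0 f(x,v)$, so the whole expression is $\int_{\partial_+ SM}\bigl(\tfrac{1}{\tau}I_0 f\bigr)\chi\,d\Sigma^{2d-2} = \langle (\tfrac1\tau I_0 f)\,d\Sigma^{2d-2},\chi\rangle$, which is \eqref{eq:I0}. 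Note this case is cleaner because the relevant density on $D$ is just $d\Sigma^{2d-2}\,du$ (no $\tau\mu$ Jacobian), consistent with $\hF$ being the trivial projection.

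The only real subtlety — and the step I expect to need the most care — is bookkeeping of densities: making sure the $b$-density $d\Sigma^{2d-2}\,du$ on $D$ is the one consistent with \eqref{eq:pullback}, and that the factor $\tau\mu$ attached to $g$ in \eqref{eq:I0star} is exactly what converts Santaló's formula / the Jacobian of $\Upsilon$ into the Hilbert-space adjoint pairing with measures $\dVol_g$ and $\mu\,d\Sigma^{2d-2}$. Concretely, the identity \eqref{eq:pullback}, $\Upsilon^* d\Sigma^{2d-1} = \tau\mu\,d\Sigma^{2d-2}\,du$, already encodes the factor $\tau\mu$, and Santaló's formula (used in establishing \eqref{eq:pullback}) is exactly the change-of-variables statement we are invoking; so in fact both \eqref{eq:I0} and \eqref{eq:I0star} are repackagings of \eqref{eq:pullback} plus the defining relation $I_0 f(x,v)=\int_0^{\tau}f(\pi(\varphi_t(x,v)))\,dt$ and the duality definition of pushforward. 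Once the density matching is pinned down, the proof is a short computation.
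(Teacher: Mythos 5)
Your proposal is correct and follows essentially the same route as the paper: both identities are verified by duality/fiber integration for the projection $\hF$, the substitution $t=\tau u$, and the adjointness of $I_0$ and $\adj$ with respect to $dM_x$ and $\mu\,d\Sigma^{2d-2}$ (which the paper unwinds explicitly via Santal\'o's formula rather than citing as a black box). The density bookkeeping you flag as the main subtlety is handled identically in the paper, so no further work is needed.
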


\begin{proof}
    When $\resp{h}(x,v,u)$ is a smooth function on $\partial SM \times [0,1]$ or $D$, we have
    \begin{align*}
	\hF_* ( \resp{h}(x,v,u)\, d\Sigma^{2d-2} du) = \left( \int_0^1 \resp{h}(x,v,u)\, du \right)\ d\Sigma^{2d-2}.
    \end{align*}
    In particular, if $\resp{h} = \hpi^* f$ for some function $f$ on $M$, then 
    \begin{align}
	\hF_* \left( [ \hpi^* f]d\Sigma^{2d-2} du \right) &= \left(\int_0^1 f(\pi(\varphi_{u\tilde \tau(x,v)}(x,v)))\ du\right)\ d\Sigma^{2d-2} \nonumber \\
	&= \left(\frac{1}{\tilde\tau(x,v)} \int_0^{\tilde \tau(x,v)} f(\pi(\varphi_t(x,v)))\ dt\right)\ d\Sigma^{2d-2}, \label{eq:bdens}
    \end{align}
    and similarly for $\Upsilon|_D$. We arrive at \eqref{eq:I0}. 

    Next, to prove \eqref{eq:I0star}, suppose $\resp{h} \in C^\infty(\partial_+ SM)$, so that its pullback $(\hF^* \resp{h})(x,v,u)$ equals $\resp{h}(x,v)$. 
    Now push forward the density $\resp{h}(x,v)\ d\Sigma^{2d-2} du$: for any \F{$f\in C^\infty(M)$}, 
    \begin{align*}
	\left\langle \hpi_* [\hF^* \resp{h}\ d\Sigma^{2d-2}\ du], f \right\rangle &= \left\langle \resp{h}(x,v)\ d\Sigma^{2d-2}\ du, \hpi^* f\right\rangle \\
	&= \int_{\partial_+ SM} \resp{h}(x,v) \int_0^1 f(\pi(\varphi_{u\tau(x,v)}(x,v)))\ du\ d\Sigma^{2d-2} \\
	&= \int_{\partial_+ SM} \frac{\resp{h}(x,v)}{\tau(x,v) \mu(x,v)} \int_{0}^{\tau(x,v)} f(\resp{\pi(}\varphi_t(x,v)\resp{)})\ dt\ \mu\ d\Sigma^{2d-2} \\
	&= \int_M \int_{S_x} \left(\frac{\resp{h}}{\tau\mu}\right)\resp{(\foot(x,v))}\ dS(v)\ f(x)\ \dM_x. 
    \end{align*}
    In particular, we arrive at the relation
    \begin{align*}
	\hpi_* [\hF^* \resp{h}\ d\Sigma^{2d-2}\ du] = \adj  \left( \frac{\resp{h}}{\tau\mu} \right)\ \dM_x,
    \end{align*}
which amounts to \eqref{eq:I0star}.
\end{proof}

\section{Special coordinates and index sets near the glancing set} 

\subsection{Preliminaries: blowups} \label{sec:blowup} 

We first recall, the notion of blowing up a $p$-submanifold in a manifold with corners
. Let $N$ be a manifold with corners, and $(x_1, \ldots, x_k, y_1, \ldots, y_{n-k})$ an `adapted' set of coordinates; this means that each $x_j$ is a bdf for a boundary hypersurface $H_j$ of $N$ and each $y_j$ is an interior variable, lying in $(-\epsilon, \epsilon)$. We say that a submanifold $Z \subset N$ is a $p$-submanifold if there exists a set of adapted coordinates such that $Z = \{(x,y): x_1 = \ldots = x_r = 0,\ y_1 = \ldots = y_s = 0\}$. 

The utility of this notion is that if $Z$ is a $p$-submanifold, then a neighborhood of the $0$-section in the inward-pointing orthant of the normal bundle $NZ$ is diffeomorphic to a neighborhood of $Z$ in $N$. 

With $Z$ as above, we define the blowup $[N;Z]$ to be the new manifold with corners obtained by replacing each point $z \in Z$ by its inward-pointing spherical normal bundle, and endowing this set with the obvious topology and the minimal $C^\infty$ structure for which the lifts of smooth functions on $N$ and spherical polar coordinates around $Z$ are all smooth.  Thus, for example, the origin $\{0\}$ is a $p$-submanifold in $\mathbb R^n$ and the blowup $[\mathbb R^n; \{0\}]$ is diffeomorphic to a half-cylinder $[0,\infty)_r \times S^{n-1}_\theta$, where $(r,\theta)$ are spherical polar coordinates on $\mathbb R^n$.

\subsection{Special coordinates on $\partial_+ SM$ near $\partial_0 SM$} \label{sec:glancingCoords}

The considerations below rely crucially on choosing a coordinate system on $\partial SM$ near $\partial_0 SM$ for which $2d-2$ 
of the coordinates are even with respect to the scattering relation, and the remaining one, which is a bdf for $\del_0 SM \subset \del_+ SM$, is odd.  
Following \cite[Theorem C.4.5]{Hoermander2007}, one can obtain such coordinates by symmetrizing and skew-symmetrizing 
any local chart near any $p\in \partial_0 SM$ with respect to $\alpha$, yielding $2(2d-1)$ functions which are of course dependent,
but then discarding a judicious half of these to obtain independent set which is the desired coordinate system.    We shall 
choose these in a slightly different and more geometric way. 

The following result does not need that $M$ be non-trapping since we only consider {\it almost glancing} geodesics, which
lie near the boundary and are never trapped. 

\begin{Lemma}[Scattering-friendly coordinates]\label{lem:scatcoords}
Let $(M,g)$ be a Riemannian manifold with strictly convex boundary, and let $\alpha$ be its scattering relation, which is well-defined and smooth in 
a neighborhood of $\partial_0 SM$. Then there exists an $\alpha$-invariant neighborhood 
$U$ of $\partial_0 SM$ in $\partial SM$ and coordinates $(t,\omega)\colon U\to \Rm\times \partial_0 SM$ satisfying 
    \begin{align*}
	(t,\omega) (\alpha(x,v)) = (-t,\omega)(x,v) \qquad \forall\, (x,v)\in U.
    \end{align*}
\end{Lemma}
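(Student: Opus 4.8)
The plan is to construct the coordinate system geometrically from the structure of the scattering relation near $\partial_0 SM$, exploiting the fact that $\alpha$ is a smooth involution whose fixed-point set is the hypersurface $\partial_0 SM$, and that its $(-1)$-eigenspace along the fixed set is spanned by the geodesic vector field $X$. First I would produce the $\alpha$-invariant neighborhood $U$: starting from any neighborhood of $\partial_0 SM$ in $\partial SM$, replace it by the intersection with its image under $\alpha$ (using $\alpha^2 = \mathrm{id}$), which is automatically invariant. The tangential coordinates $\omega$ come from a collar: since $\partial_0 SM$ is a hypersurface in $\partial SM$ and $\alpha$ restricts to the identity on it, I would use the flow of $X$ (or equivalently the exponential map of an $\alpha$-averaged metric) to identify a neighborhood of $\partial_0 SM$ with $(-\varepsilon,\varepsilon)\times \partial_0 SM$, in such a way that $\alpha$ acts only in the first slot; the projection to $\partial_0 SM$ then furnishes the $2d-2$ even functions $\omega$.

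For the odd coordinate $t$, the natural candidate is a suitably normalized version of $\tilde\tau$ itself, or of the signed distance to $\partial_0 SM$ measured along $X$-orbits. Since $\tilde\tau\circ\alpha = -\tilde\tau$ and $X\tilde\tau = -2 \neq 0$ along $\partial_0 SM$, the function $\tilde\tau$ vanishes exactly on $\partial_0 SM$, is $\alpha$-odd, and has nonvanishing differential transverse to $\partial_0 SM$; hence $t := -\tfrac12\tilde\tau$ (or $\tilde\tau$ up to sign normalization) serves as a bdf for $\partial_0 SM$ inside $\partial_+ SM$ which is odd under $\alpha$. The remaining task is to check that $(t,\omega)$ is a genuine coordinate chart, i.e. that $dt$ and the $d\omega_j$ are independent at points of $\partial_0 SM$: this holds because $X$ is transverse to $\partial_0 SM$ (as $Xt = 1 \neq 0$) while the $\omega_j$ are pulled back from $\partial_0 SM$ and $X$ is tangent to the $X$-orbits used to define the collar, so $X\omega_j = 0$; thus the Jacobian is block-triangular and invertible. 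Alternatively one can simply invoke \cite[Theorem C.4.5]{Hoermander2007} to get an $\alpha$-adapted chart directly and then replace its odd coordinate by $\tilde\tau$, which is already globally odd.

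The main obstacle, and the point requiring the most care, is the interaction between the "even tangential / odd normal" splitting and the corner structure: $\partial_0 SM = \partial_+ SM \cap \partial_- SM$ is not an interior hypersurface of $\partial SM$ but rather the locus where the two boundary components of $SM$ meet, so one must be careful that $t$ is a bdf for $\partial_0 SM$ when the latter is viewed inside $\partial_+ SM$ (where $t \geq 0$), consistently on both sides under $\alpha$. This is handled precisely by the identity $\tilde\tau = \tau$ on $\partial_+ SM$ and $\tilde\tau = -\tau$ on $\partial_- SM$ together with $X\tilde\tau = -2$, which guarantees $\tilde\tau > 0$ on $\partial_+ SM \setminus \partial_0 SM$ near the glancing set and hence that $t$ is a legitimate (sign-definite) bdf on each side. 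Once the transversality of $X$ and the $\alpha$-equivariance of the collar are in place, verifying the stated identity $(t,\omega)(\alpha(x,v)) = (-t,\omega)(x,v)$ is immediate, and the non-trapping hypothesis is genuinely not needed since only a neighborhood of the glancing set is involved.
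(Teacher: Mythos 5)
Your proposal is correct in substance but follows a genuinely different route from the paper. The paper builds $(t,\omega)$ geometrically: it sends $(x,v)$ to the pair $(x,\pi(\alpha(x,v)))\in\partial M\times\partial M$, inverts the map $(\omega,t)\mapsto(\pi(\varphi^\partial_{-t}\omega),\pi(\varphi^\partial_{t}\omega))$ built from the boundary geodesic flow, and reads off $t$ as half the length of the boundary chord from $x$ to $\pi(\alpha(x,v))$ and $\omega$ as its midpoint data; the oddness/evenness then comes for free from the factor-swap symmetry, and essentially all of the work goes into proving smoothness up to $\partial_0 SM$ by lifting both maps to the blowup $[\partial M\times\partial M,\Delta]$ via Lemma \ref{lem:lift}. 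You instead linearize the involution abstractly (an $\alpha$-averaged metric, or \cite[Theorem C.4.5]{Hoermander2007}) to obtain the even tangential coordinates, and take the globally defined odd function $\tilde\tau$ (up to sign and a factor of $2$) as the transverse coordinate, using $X\tilde\tau=-2$ for transversality. Your argument is shorter and avoids the blowup lemma entirely; what it gives up is the explicit geometric description of $(t,\omega)$ that the paper records after the lemma and uses in later coefficient computations. The lemma as stated does not require that description, and by the coordinate-invariance proposition any $\alpha$-bdf such as $\tilde\tau$ serves as well as the paper's $t$, so this is a legitimate trade-off.

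One point needs correction: the collar cannot be produced by ``the flow of $X$.'' The geodesic vector field is tangent to $\partial SM$ only along $\partial_0 SM$; at nearby points of $\partial_+ SM$ it points into the interior of $SM$ (and a glancing geodesic leaves $M$ instantly by strict convexity), so its flow neither stays in $\partial SM$ nor defines a tubular neighborhood of $\partial_0 SM$ there. Consequently your claim $X\omega_j=0$ is unjustified. Only your alternative --- the normal exponential map of an $\alpha$-invariant metric on $\partial SM$, for which $d\alpha=-\mathrm{id}$ on the normal bundle of the fixed-point set --- actually works; and your Jacobian argument survives without $X\omega_j=0$, since $dt$ annihilates $T\partial_0 SM$ while $d\omega$ restricts to the identity there, so invertibility already follows from $Xt\neq 0$ alone.
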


When $(x,v)\in U\cap \partial_+ SM$, then $(t,\omega)(x,v)$ is interpreted as follows. Let $\varphi_t^\partial$ the geodesic 
flow on $S(\partial M) \cong \partial_0 SM$. Choose $U$ sufficiently small so that $x$ and $\pi(\alpha(x,v))$ are joined 
by a unique minimal length geodesic in $\del M$ for all $(x,v) \in U$, directed from $x$ to $\pi(\alpha(x,v))$. Then $t$ is half the length of that geodesic, and $\omega = (y,w)\in \partial_0 SM = S(\partial M)$ is the geodesic midpoint $y$ and velocity $w$ at this midpoint between $x$ and $\pi(\alpha(x,v))$ on $\del M$.

Before proving this, we first recall a result which is very close to the original definition of blowups in Section \ref{sec:blowup}.
\begin{Lemma}\label{lem:lift}
	Let $(N,g)$ \resp{be} a Riemannian manifold.
    
    (i) Let $y\in N$ and suppose that $\varepsilon>0$ is such that the exponential map $E_y \colon S_y N \times (0,\varepsilon) \to N\backslash\{y\}$ is a diffeomorphism onto its image. Then the map $E_y$ lifts to a smooth diffeomorphism 
    \begin{align*}
	\widehat{E_y}\colon S_y N\times [0,\varepsilon) \to [N,\{y\}],
    \end{align*}
which has image a neighborhood of the front face in $[N, \{y\}]$.

    (ii) Suppose that $U$ is a punctured neighborhood of the diagonal in $N\times N$ such that 
    \begin{align*}
	\Phi\colon SN\times (0,\varepsilon) \ni (x,v,t) \to (x,Exp_{x}(tv)) \in U
    \end{align*}
    is a diffeomorphism onto its image, then $\Phi$ lifts to a smooth diffeomorphism 
    \begin{align*}
	\widehat{\Phi}\colon SN\times [0,\varepsilon) \ni (x,v,t) \to (x,Exp_{x}(tv)) \in [N\times N,\Delta],
    \end{align*}
onto a neighborhood of the front face.
\end{Lemma}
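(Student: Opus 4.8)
The statement to prove is Lemma \ref{lem:lift}, which asserts that the exponential map (and its "fiberwise" version near the diagonal) lifts smoothly to the blowup $[N;\{y\}]$ (resp. $[N\times N;\Delta]$). Here is my plan.

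\textbf{Plan of proof.} The proof is essentially a matter of unwinding the definition of the blowup given in Section \ref{sec:prelim} and recognizing that polar coordinates adapted to $y$ (resp. to the diagonal $\Delta$) are, up to a diffeomorphism, exactly the coordinates furnished by the exponential map. I would proceed as follows.

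\emph{Part (i).} First, recall that $[N;\{y\}]$ is by definition obtained by replacing $y$ with its inward-pointing spherical normal bundle $S(N_y N)$ and equipping the result with the minimal $C^\infty$ structure making the lifts of smooth functions on $N$ and polar coordinates around $y$ smooth. Since $N_y N \cong T_y N$ canonically, and $S(N_y N) \cong S_y N$ via the metric, there is a canonical identification of the front face of $[N;\{y\}]$ with $S_y N$. Now fix normal coordinates $(x^1,\dots,x^n)$ centered at $y$, i.e. the coordinates pulled back by $\exp_y^{-1}$ on $B_\varepsilon(y)\setminus\{y\}$; in these coordinates the map $E_y$ becomes $(v,t)\mapsto tv$ (identifying $S_yN$ with the unit sphere in $\R^n$), which is precisely the polar-coordinate parametrization of $\R^n\setminus\{0\}$ near the origin. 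The minimal smooth structure on $[\R^n;\{0\}] = [0,\varepsilon)_t\times S^{n-1}_v$ is, by construction, the one for which $(t,v)$ are smooth coordinates and for which $x = tv$ lifts smoothly — but this is exactly the statement that $(v,t)\mapsto tv$ lifts to a diffeomorphism $S^{n-1}\times[0,\varepsilon)\to [\R^n;\{0\}]$ onto a neighborhood of the front face. Transporting this back through normal coordinates gives the claimed lift $\widehat{E_y}$. The only point needing care is that normal coordinates are genuine smooth coordinates on a punctured ball (so the identification of $E_y$ with the linear polar map is a diffeomorphism, not merely a homeomorphism), which holds by the hypothesis that $E_y$ is a diffeomorphism onto its image and the smoothness of $\exp_y$.

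\emph{Part (ii).} This is the "family" version and follows the same template with $\{y\}$ replaced by the diagonal $\Delta\subset N\times N$, which is a $p$-submanifold (indeed an interior submanifold, no boundary in play). Its normal bundle $N\Delta$ is canonically isomorphic to $TN$ (via the second factor, say), so $S(N\Delta)\cong SN$, identifying the front face of $[N\times N;\Delta]$ with $SN$. Fiberwise normal coordinates — i.e. for each $x$, normal coordinates on the second factor centered at $x$, which depend smoothly on $x$ — turn $\Phi$ into the map $(x,v,t)\mapsto (x, tv)$, the polar parametrization transverse to $\Delta$. By the same definitional argument as in (i), applied in these product-adapted coordinates and using that blowing up commutes with taking products by a factor transverse to the submanifold, this lifts to the smooth diffeomorphism $\widehat\Phi$ onto a neighborhood of the front face. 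One should note that the existence of such smoothly-varying normal coordinates is standard (e.g. via a tubular neighborhood of $\Delta$, or directly from smoothness of $(x,u)\mapsto \exp_x(u)$ on $TN$).

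\textbf{Main obstacle.} There is no deep obstacle here; the "hard part" is purely bookkeeping: making sure the identification of the abstract front face of the blowup (a spherical normal bundle) with $S_yN$ (resp. $SN$) is done via the metric consistently, and verifying that the minimal smooth structure in the definition of the blowup is characterized by exactly the functions that the exponential-map parametrization produces. In other words, the content is showing that "$E_y$ lifts smoothly" and "$E_y$ is a diffeomorphism onto a neighborhood of the front face" are both immediate consequences of the \emph{definition} of $[N;\{y\}]$ once one chooses normal coordinates — so the proof is really a citation of the definition plus the observation that normal coordinates linearize the exponential map into the model polar map on $\R^n$. I would therefore keep the written proof short, emphasizing these identifications and referring to the construction of blowups recalled in Section \ref{sec:prelim} (and to \cite{Melrose1992}) for the model computation on $\R^n$.
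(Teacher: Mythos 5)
Your proposal is correct and follows essentially the same route as the paper: pass to exponential normal coordinates centered at $y$ so that $E_y$ becomes the model polar parametrization, invoke the definition of the blowup's smooth structure (polar coordinates are smooth on $[N;\{y\}]$ by construction), and obtain (ii) by observing that this construction varies smoothly in the basepoint along the diagonal. The paper's proof is just a terser version of the same argument.
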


\begin{proof}  For i), take exponential normal coordinates centered at $y$ and define polar coordinates $(\frac{x-y}{|x-y|}, |x-y|)$.
By definition, these are smooth on $[N,\{y\}]$ near the front face. Furthermore, $\Phi(v,t) = (\frac{v}{|v|},t)$, and this 
extends smoothly and diffeomorphically to $t=0$. This is obviously a local diffeomorphism.

The second assertion follows by simply noting that the construction in i) varies smoothly in parameters along
the diagonal.     
\end{proof}

\begin{proof}[Proof of Lemma \ref{lem:scatcoords}]
	Let $\varphi_t^\partial\colon S(\partial M) \to S(\partial M)$ \resp{be} the geodesic flow on $S(\partial M)$.  For $\varepsilon > 0$ small,
define
\begin{align*}
    U_\varepsilon^+ SM &= \{(x,v) \in \resp{\partial} SM: 0 < \mu(x,v) < \varepsilon\}, \\
    U_\varepsilon^- SM &= \{(x,v) \in \resp{\partial} SM: -\varepsilon < \mu(x,v) 
< 0\}.     
\end{align*}

Then the maps 
\begin{align*}
\Theta &\colon \partial_0 SM\times (0,\varepsilon) \ni (\omega,t) \mapsto (\pi (\varphi_{-t}^\partial (\omega)), 
\pi (\varphi_{t}^\partial (\omega))) \in \partial M\times \partial M \\
\Psi &\colon  U_\varepsilon^+SM \ni (x,v) \mapsto (\pi(x,v), \pi (\alpha(x,v))) \in \partial M\times \partial M
    \end{align*}
are diffeomorphisms onto punctured neigborhoods of the diagonal in $\partial M\times \partial M$.  Similarly, one can
define $\Theta$ and $\Psi$ on $\partial_0 SM\times (-\varepsilon,0)$ and $U_\varepsilon^- SM$, respectively.  Thus
$(\omega,t)$ is a coordinate chart on $U_\varepsilon SM := U_\varepsilon^+ SM \cup U_\varepsilon^- SM$.  Denoting by $S(x,x') = (x',x)$ the involution swapping the two factors of $\partial M\times \partial M$, then $\Theta(\omega,-t) = S(\Theta(\omega,t))$, and similarly, $\Psi(\alpha(x,v)) = S(\Psi (x,v))$ for all $(x,v)\in U_\varepsilon SM$. Note that $U_\varepsilon SM$ is a punctured neighborhood of $\partial_0 SM$ in $\partial SM$ that is stable under the scattering relation. This gives
\begin{align}
    \begin{split}
	(\omega,t) (\alpha(x,v)) &= \Theta^{-1} (\Psi (\alpha(x,v))) \\
	&= \Theta^{-1} (S(\Psi(x,v))) = (\omega,-t)(x,v), \qquad (x,v)\in U_\varepsilon SM	    
    \end{split}	
    \label{eq:proptomega}
\end{align}
and hence $(\omega,t)$ are the coordinates we are looking for, provided that they extend smoothly to $\partial_0 SM$. 

To prove this last fact, we show that $\Theta,\Psi$ extend to smooth maps
\begin{align*}
\widehat{\Theta} \colon \partial_0 SM \times [0,\varepsilon ) \to [\partial M\times \partial M,\Delta], \qquad
\widehat{\Psi} \colon \resp{U_\varepsilon^+ SM \cup \partial_0 SM} \to [\partial M\times \partial M,\Delta],
\end{align*}
where, if $(x,\left[\frac{x'-x}{|x'-x|}\right],|x'-x|)$ denote local coordinates on $[\partial M\times \partial M,\Delta]$ near the front face
($|\cdot|$ is Euclidean distance and $[v]$ denotes the equivalence class of $v$ in the spherical normal bundle of $\Delta$), we have
\begin{align*}
\widehat{\Theta}((y,w),0) = (y,[w],0) = \widehat{\Psi} ( (y,w)), \qquad (y,w)\in \partial_0 SM. 
\end{align*}

Assuming this, for the moment, then the proof follows by setting $(\omega,t) = \widehat{\Theta}^{-1}\circ \widehat{\Psi} (x,v)$ on
$U_\varepsilon^+ SM \cup \del_0 SM$. Note that $t=0$ on $\partial_0 SM$ and $\omega$ is the identity map there.  By \eqref{eq:proptomega},
this extends to $\alpha(  U_\epsilon^+ SM) \cup \del_0 SM$. 

Finally, to construct $\widehat{\Theta}$:  for each $y\in \partial M$, and in terms of coordinates 
\begin{align*}
    (x,\left[\frac{x'-x}{|x'-x|}\right],|x'-x|)    
\end{align*}
on $[\partial M\times \partial M, \Delta]$ (where $x,x'$ are written in normal exponential coordinates w.r.t. $y$), we have
\begin{align*}
\widehat{\Theta}( (y,w),t) = (y-tw, [w], 2t |w|)
\end{align*}
which clearly extends smoothly to $t=0$. \resp{The basepoint $y$ is a parameter in this construction and everything can be carried out in such a way as to depend smoothly on $y$.}

As for $\widehat{\Psi}$, Let $\widetilde{M}$ be a smooth extension of $M$, $y\in \partial M$ and $V$ a small neighborhood of $y$ in
$\widetilde{M}$. By Lemma \ref{lem:lift}.(i), the exponential map $E_y$ in $\widetilde{M}$ lifts to a smooth map $\widehat{E_y} \colon S_y \widetilde{M}\times [0,\varepsilon) \to [U,\{y\}]$. Now define the composition
\begin{align}
    \resp{U_\varepsilon^+ S_y M \cup \partial_0 S_y M} \ni v \mapsto (v, \tau(y,v)) \mapsto \widehat{\Psi}(y,v) := \widehat{E}_y (v, \tau(y,v)),
\label{eq:compo}
\end{align}
This is smooth. To show that it takes values in $[V\cap \partial M, \{y\}]$ (and not just in $[\widetilde M, \{y\}]$\resp{)}, 
first suppose that $v\in U_\varepsilon^+ S_y M$. Then $\widehat{\Psi}(y,v)$ remains in $\partial M\backslash \{y\}$. 
As $v\to w \in \partial_0 S_y M$, convexity of $M$ implies that $\tau(y,v) \to 0$ and thus for $w\in \partial_0 S_y M$, 
$\widehat{E}_y (w, \tau(y,w)) = (\frac{w}{|w|},0) \in [V\cap \partial M, \{y\}]$.  Once again, this may be carried out
smoothly in $y$. 
\end{proof}

\subsection{Special index sets and the spaces $C_{\alpha,\pm}^\infty(\partial_+ SM)$} \label{sec:indexSets}

\F{Recall the definition \eqref{eq:Cpm} of $C_{\alpha,\pm}^\infty(\partial_+ SM)$. A first observation is that the function $\tilde \tau \in C^\infty(\partial SM)$ defined in \eqref{eq:ttau} satisfies $\tilde\tau \circ \alpha = -\tilde\tau$, and hence $\tilde\tau = A_- (\tau|_{\partial_+ SM})$ and $\tau|_{\partial_+ SM}\in C_{\alpha,-}^\infty(\partial_+ SM)$. Moreover, by \cite[Lemma 6.11]{Monard2020a}, the strict convexity implies that there is a positive function $h$ on $\partial SM$ such that $\tilde \tau = h \mu$ on $\partial SM$ with $h(y,w) = \frac{2}{\two_y(w,w)}$ for $(y,w)\in \partial_0 SM$, so that $\tilde\tau$ vanishes simply at $\partial_0 SM$. With these considerations in mind, we first prove the following.}

\begin{Lemma} \label{lem:sum}  
The space of all smooth functions on $\del_+ SM$ decomposes as
    \begin{align}
	C^\infty (\partial_+ SM) = C_{\alpha,+}^\infty (\partial_+ SM) + C_{\alpha,-}^\infty (\partial_+ SM).
	\label{eq:sum}
    \end{align}
Furthermore, 
    \begin{align}
	C_{\alpha,-}^\infty (\partial_+ SM) = \tau C_{\alpha,+}^\infty (\partial_+ SM).
	\label{eq:Cminusprod}
    \end{align}
\end{Lemma}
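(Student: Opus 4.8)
The plan is to exploit the even/odd decomposition of smooth functions under the scattering involution $\alpha$, for which the coordinates $(t,\omega)$ from Lemma \ref{lem:scatcoords} are tailor-made. Since $\alpha$ acts on $U_\varepsilon SM$ by $(t,\omega)\mapsto(-t,\omega)$, any $u\in C^\infty(\partial SM)$ splits, near $\partial_0 SM$, into its even part $u_+(t,\omega)=\tfrac12(u(t,\omega)+u(-t,\omega))$ and odd part $u_-(t,\omega)=\tfrac12(u(t,\omega)-u(-t,\omega))$, both smooth; this is exactly the symmetrization argument of \cite[Theorem C.4.5]{Hoermander2007}. The point is to transfer this to functions merely defined on $\partial_+ SM$ by means of the operators $A_\pm$.

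First I would prove \eqref{eq:sum}. Given $f\in C^\infty(\partial_+ SM)$, I want to write $f=f_++f_-$ with $A_\pm f_\pm$ smooth on all of $\partial SM$. Away from a neighborhood of $\partial_0 SM$ this is trivial (any extension works, glued with a partition of unity), so the content is purely local near $\partial_0 SM$. There, using the coordinates $(t,\omega)$, the involution interchanges $\partial_+ SM$ ($t\ge 0$, say) and $\partial_- SM$ ($t\le 0$). Note $A_+ f$ is the \emph{even} extension of $f|_{t\ge0}$ across $t=0$ and $A_- f$ is the \emph{odd} extension. Neither is automatically smooth. The fix: extend $f$ arbitrarily but smoothly to a function $\tilde f\in C^\infty(\partial SM)$ in a neighborhood of $\partial_0 SM$ (possible since $\partial_+ SM$ is a manifold with boundary sitting inside the manifold $\partial SM$), decompose $\tilde f=\tilde f_++\tilde f_-$ into $\alpha$-even and $\alpha$-odd parts, each smooth on $\partial SM$, and observe that $\tilde f_\pm|_{\partial_+ SM}$ lie in $C^\infty_{\alpha,\pm}(\partial_+ SM)$ because $A_\pm(\tilde f_\pm|_{\partial_+ SM})=\tilde f_\pm$ by construction; moreover $\tilde f_+|_{\partial_+ SM}+\tilde f_-|_{\partial_+ SM}=\tilde f|_{\partial_+ SM}=f$. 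Patching this local decomposition to a global one with a partition of unity subordinate to $\{U_\varepsilon SM,\ \partial SM\setminus \partial_0 SM\}$ — being slightly careful that the cutoffs themselves can be chosen $\alpha$-invariant — yields \eqref{eq:sum}.

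Next I would prove \eqref{eq:Cminusprod}. The inclusion $\tau C^\infty_{\alpha,+}\subset C^\infty_{\alpha,-}$ is immediate: if $g\in C^\infty_{\alpha,+}$ then $A_+g\in C^\infty(\partial SM)$, and since $\tilde\tau$ is smooth on $\partial SM$, odd under $\alpha$ (because $\tilde\tau\circ\alpha=-\tilde\tau$), and restricts to $\tau$ on $\partial_+ SM$, we get $A_-(\tau g)=\tilde\tau\cdot A_+ g\in C^\infty(\partial SM)$, so $\tau g\in C^\infty_{\alpha,-}$. For the reverse inclusion, take $h\in C^\infty_{\alpha,-}(\partial_+ SM)$, so $A_-h$ is smooth on $\partial SM$ and $\alpha$-odd. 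In the coordinates $(t,\omega)$, $A_-h$ is an odd smooth function of $t$, hence by Taylor's theorem with the flat factor $A_-h=t\cdot \psi(t,\omega)$ with $\psi$ smooth and $\alpha$-even. Now $t$ itself is, up to a smooth nonvanishing $\alpha$-even factor, comparable to $\tilde\tau$ near $\partial_0 SM$ — indeed $\tilde\tau$ is $\alpha$-odd and vanishes exactly to first order on $\partial_0 SM$ (its $t$-derivative is $X\tilde\tau=-2\ne 0$ there), so $\tilde\tau=t\cdot e(t,\omega)$ with $e$ smooth, $\alpha$-even and $e>0$. Hence $A_-h=\tilde\tau\cdot(\psi/e)$ with $\psi/e$ smooth and $\alpha$-even, i.e.\ $h=\tau\cdot(\psi/e)|_{\partial_+SM}$ and $(\psi/e)|_{\partial_+SM}\in C^\infty_{\alpha,+}(\partial_+ SM)$ since $A_+$ of it equals $\psi/e$. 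Away from $\partial_0 SM$ division by $\tau$ is harmless since $\tau$ is smooth and positive there. Patching the local factorization near $\partial_0 SM$ with the trivial one away from it (again via an $\alpha$-invariant partition of unity) gives $h\in\tau C^\infty_{\alpha,+}(\partial_+ SM)$.

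The main obstacle is the bookkeeping at the interface between ``near $\partial_0 SM$'' and ``away from $\partial_0 SM$'': one must ensure that the cutoff functions used to patch the local decompositions are $\alpha$-invariant (or have controlled parity) so that applying $A_\pm$ to the patched function still lands in $C^\infty(\partial SM)$, and that multiplying/dividing by $\tau$ across the overlap does not destroy smoothness. Choosing the cutoffs to be functions of $\mu^2$ (equivalently of $t^2$ near $\partial_0 SM$), which are automatically $\alpha$-even, handles this cleanly. The only genuinely analytic input is the elementary fact that a smooth odd function of one real variable is that variable times a smooth even function, together with the first-order vanishing of $\tilde\tau$ along $\partial_0 SM$ established in the displayed computation $d\alpha|_{(x,v)}X=-X$ above.
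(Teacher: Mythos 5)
Your proof is correct and follows essentially the same route as the paper's: extend $f$ smoothly across $\partial_0 SM$ (the paper uses a single Seeley extension where you patch local extensions with a partition of unity), split into $\alpha$-even and $\alpha$-odd parts, restrict back to $\partial_+ SM$; and for \eqref{eq:Cminusprod}, divide by $\tau$ using its exact first-order vanishing along $\partial_0 SM$. One cosmetic slip: $\mu^2$ is not $\alpha$-even in general (only $t^2$ or $\tilde\tau^2$ are), but since you offer $t^2$ as the alternative for the cutoffs this does not affect the argument.
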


\begin{proof} To prove the first assertion, fix any $f\in C^\infty (\partial_+ SM)$. Using a Seeley extension operator across the 
hypersurface $\partial_0 SM$, extend $f$ to $\partial_- SM$ into $\tilde f\in C^\infty(\partial SM)$. Now write 
$\tilde f = \tilde f_+ + \tilde f_-$ where $\tilde f_{\pm} = \frac{1}{2} (\tilde f \pm \alpha^* \tilde f)$. Since 
$\tilde f_\pm \in C^\infty(\partial SM)$, we have that $f_\pm := \tilde f_\pm|_{\partial_+ SM} \in C_{\alpha,\pm}^\infty$; 
since $f = f_+ + f_-$, the result follows.    

As for the second assertion, the implication $\supset$ is clear. The other inclusion follows from the fact that if 
$f\in C^\infty_{\alpha,-}$, then it vanishes to order at least $1$ at $\partial_0 SM$, so the ratio $\frac{f}{\tau}$ 
is well-defined, smooth on $\partial_+ SM$, and belongs to $C_{\alpha,+}^\infty$ since $A_+ \left(\frac{f}{\tau}\right) = \frac{A_- f}{A_-\tau}$. 
\end{proof}

The  intersection $C_{\alpha,+}^\infty\cap C_{\alpha,-}^\infty$ equals $\dot C^\infty(\partial_+ SM)$, the space of 
smooth functions vanishing to infinite order at $\partial_0 SM$. 

\paragraph{$C_\alpha^\infty$ index sets on $\partial_+ SM$.} We now define a class of polyhomogeneous functions which are a slight generalization of these classes of even and odd functions.  As above, this definition is natural given the existence of the involution $\alpha$. 

\begin{Definition}[$\alpha$-boundary defining function]
We say that a boundary defining function $\bt$ for $\del_0 SM$ in $\del SM$ is an $\alpha$-boundary defining function ($\alpha$-bdf) if $\bt\in C_{\alpha,-}^\infty(\del_+ SM)$.
\end{Definition}
Three examples of $\alpha$-bdfs are $\t$, $\mu - \mu \circ \alpha$, and the coordinate $t$ from Lemma \ref{lem:scatcoords}.

\begin{Lemma}[Characterization of $C_{\alpha,\pm}^\infty(\partial_+ SM)$]\label{lem:bdfexpansion} 
Let $\bt$ be any $\alpha$-bdf for $\partial_+ SM$.  Then for any  $f\in C^\infty_{\alpha,+} (\partial_+ SM)$, there exist smooth functions $a_{2k}$ on $\partial_0 SM$ such that 
\begin{align*}
    f \sim \sum_{k\ge 0} \bt^{2k} a_{2k}.
\end{align*}
Likewise, if $f\in C^\infty_{\alpha,-} (\partial_+ SM)$, then there exist smooth functions $a_{2k+1}$ on $\partial_0 SM$ such that 
\begin{align*}
f \sim \sum_{k\ge 0} \bt^{2k+1} a_{2k+1}.
\end{align*}    
\end{Lemma}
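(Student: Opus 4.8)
The plan is to reduce the general statement to a normal form using the concrete model supplied by Lemma \ref{lem:scatcoords}, and then to exploit a standard even/odd Taylor-expansion argument across the fixed-point hypersurface $\partial_0 SM$. First I would fix the scattering-friendly coordinates $(t,\omega)$ on an $\alpha$-invariant neighborhood $U$ of $\partial_0 SM$, in which $\alpha$ acts by $(t,\omega)\mapsto(-t,\omega)$. In these coordinates, if $f\in C^\infty_{\alpha,+}(\partial_+ SM)$ then its even extension $A_+ f$ is smooth on $\partial SM$ and satisfies $(A_+ f)(-t,\omega)=(A_+ f)(t,\omega)$; hence the ordinary Taylor expansion of $A_+f$ at $t=0$ contains only even powers of $t$, i.e. $A_+f\sim\sum_{k\ge0} t^{2k} a_{2k}(\omega)$ with $a_{2k}\in C^\infty(\partial_0 SM)$. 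Restricting to $\{t\ge 0\}=U\cap\partial_+SM$ gives the claimed expansion with respect to the particular $\alpha$-bdf $t$. The odd case is identical: $A_-f$ is smooth and odd in $t$, so only odd powers $t^{2k+1}$ survive.

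Next I would upgrade this from the specific bdf $t$ to an arbitrary $\alpha$-bdf $\bt$. The point is that if $\bt$ is another $\alpha$-bdf, then $\bt\in C^\infty_{\alpha,-}$ and $t\in C^\infty_{\alpha,-}$ both vanish to exactly first order at $\partial_0 SM$, so the ratio $\bt/t$ extends to a smooth, strictly positive function on $\partial_+ SM$; moreover $A_-(\bt)/A_-(t)$ is even, so $\bt/t\in C^\infty_{\alpha,+}$. By the first part of the argument, $\bt/t$ itself has an expansion in even powers of $t$ with smooth coefficients on $\partial_0 SM$. Substituting $t=(\bt/t)^{-1}\bt$ — equivalently, writing $t$ as $\bt$ times a smooth positive even function of $\bt$ (one inverts the relation $t\mapsto \bt$ formally at the level of asymptotic series, which is legitimate since the leading coefficient is nonvanishing) — and plugging into $\sum_k t^{2k}a_{2k}(\omega)$, one regroups terms to obtain $f\sim\sum_k \bt^{2k}\tilde a_{2k}(\omega)$ with new smooth coefficients $\tilde a_{2k}$ on $\partial_0 SM$. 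The same substitution handles the odd case, with the overall factor of $\bt$ (rather than $\bt^2$) going along for the ride. Finally, since any $f\in C^\infty_{\alpha,\pm}$ is already smooth on $\partial_+ SM$ and the asymptotic expansion is being compared against this genuine smooth function, the remainder estimates needed for `$\sim$' are automatic.

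The step I expect to require the most care is the passage between different $\alpha$-bdf's, specifically verifying that the change of defining function preserves the \emph{parity} of the expansion — that is, that it introduces no odd powers into the even expansion (and vice versa). This is exactly where the hypothesis $\bt\in C^\infty_{\alpha,-}$ is used: a general positive bdf would destroy the parity, but an $\alpha$-bdf changes coordinates by an even diffeomorphism in the normal variable, so the even/odd grading of the Taylor series is respected. Everything else is bookkeeping with Borel-type asymptotic series and the observation that all coefficient functions, being restrictions of smooth functions to the smooth hypersurface $\partial_0 SM$, are themselves smooth.
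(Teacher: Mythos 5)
Your proposal is correct and follows essentially the same route as the paper: scattering-friendly coordinates from Lemma \ref{lem:scatcoords}, the even/odd Taylor expansion of $A_\pm f$ across the fixed-point set $\partial_0 SM$, and the key observation that $\bt/t\in C^\infty_{\alpha,+}$ is smooth and nonvanishing. The only (cosmetic) difference is that the paper avoids your formal series inversion/substitution step by using $(\bt,\omega)$ directly as coordinates — the nonvanishing Jacobian lets one Taylor-expand $A_\pm f$ in $\bt$ at the outset, and the symmetry $A_+f(\bt,\omega)=A_+f(-\bt,\omega)$ kills the odd coefficients immediately.
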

\begin{proof} 
If $f\in C_{\alpha,+}^\infty(\partial_+ SM)$, then $A_+ f\in C^\infty(\partial SM)$ and $A_+ f(x,v) = A_+ f(\alpha(x,v))$. Fix coordinates $(t,\omega)$ as in Lemma \ref{lem:scatcoords} in a neighborhood of $\partial_0 SM$. If $\bt$ denotes any other $\alpha$-bdf, the function $\bt/t$ belongs to $C_{\alpha,+}^\infty$ and is non-vanishing in a neighborhood of $\partial_0 SM$. In particular the jacobian of the map $(t,\omega)\mapsto (\bt,\omega)$ is non-vanishing in a neighborhood of $\partial_0 SM$, and thus we can use $(\bt,\omega)$ as local coordinates near $\partial_0 SM$. Taylor-expanding $A_+ f$ near $\{ \bt =0 \} = \partial_0 SM$, we have that $A_+ f(x,v) \sim \sum_{p\ge 0} a_p(\omega) \bt^p$ for some functions $a_p\in C^\infty(\partial_0 SM)$. Then the symmetry $A_+ f(\bt,\omega) = A_+ f(\alpha(\bt,\omega)) = A_+ f (-\bt,\omega)$ forces all odd coefficients $a_{2k+1}$ to vanish.

    The case $f\in C_{\alpha,-}^\infty$ is a direct consequence of what was just proved, equation \eqref{eq:Cminusprod}, and the fact that $\bt/\tau\in C^{\infty}_{\alpha,+}$.
\end{proof}

\begin{Definition}[$C_\alpha^\infty$-index sets on $\partial_+ SM$ \F{and $\A^*_{\phg,\alpha}(\partial_+ SM)$ spaces}]
A $C_\alpha^\infty$-index set on $\partial_+ SM$ is an index set, as defined in Section \ref{sec:prelim}, which satisfies 

\smallskip

(c') If $(z,k)\in E$, then $(z+2,k)\in E$. 

\smallskip

\noindent rather than condition (c) which distinguishes what we have been calling $C^\infty$ index sets.  

\F{With $E$ a $C_\alpha^\infty$-index set, we say that a function $f\in C^\infty( (\partial_{+}SM)^o)$ belongs to $\A_{\phg,\alpha}^E(\partial_+ SM)$ if for some $\alpha$-bdf $\bt$ for $\partial_+ SM$, there exist $a_{z,k}\in C^\infty (\partial_0 SM)$ for each $(z,k)\in E$ such that
    \begin{align*}
	f \sim \sum_{(z,k)\in E} \bt^z (\log \bt)^k a_{z,k}.
    \end{align*}}
\end{Definition}

\resp{
Note in particular that, by Lemma \ref{lem:bdfexpansion}, 
\begin{align*}
	C^\infty_{\alpha,+} (\partial_+ SM) = \A_{\phg,\alpha}^{2\Nm_0\times \{0\}} (\partial_+ SM), \qquad C^\infty_{\alpha,-} (\partial_+ SM) = \A_{\phg,\alpha}^{(2\Nm_0+1)\times \{0\}} (\partial_+ SM).
\end{align*}}

\paragraph{Coordinate invariance.}
We now show that the spaces \F{$\A_{\phg,\alpha}^E (\partial_+ SM)$ are} 
invariant under change of $\alpha$-bdf \F{and multiplication by $C_{\alpha,+}^\infty (\partial_+ SM)$ functions.}

\begin{Proposition} Fix $E$ a $C^\infty_\alpha$-index set. \\

	\F{(a)} If $f\in C^\infty( (\partial_+ SM)^o)$ has an asymptotic expansion with index set $E$ relative to some $\alpha$-bdf $\bt$, and if $\bt'$ is {\bf any} $\alpha$-bdf for $\partial_+ SM$, then 
	\begin{align*}
		f \sim \sum_{(z,k)\in E} \bt'^z (\log \bt')^k b_{z,k}.
	\end{align*}
	where the $b_{z,k} \in C^\infty(\partial_0 SM)$. \F{Hence elements of $\A_{\phg,\alpha}^E(\partial_+ SM)$ indeed have asymptotic expansions with index set at most $E$ with respect to {\em any} $\alpha$-bdf.}

	\F{(b) The set $\A_{\phg,\alpha}^E(\partial_+ SM)$ is stable under multiplication by $C_{\alpha,+}^\infty(\partial_+ SM)$ functions.}
\end{Proposition}

\begin{proof} \F{(a)} By assumption, $f$ has an expansion of the desired type, with index set $E$, with respect to one particular $\alpha$-bdf $\bt$. 

If $\bt'$ is any other $\alpha$-bdf, then $\frac{\bt}{\bt'}$ is a smooth, everywhere positive function which has only even terms in its Taylor expansion at $\del_0 SM$ with respect to either $\bt$ or $\bt'$. Using Lemma \ref{lem:bdfexpansion} and writing 
\begin{align*}
	\frac{\bt}{\bt'} \sim \sum_{p\ge 0} (\bt')^{2p} c_{2p}, \qquad c_{2p}\in C^\infty(\partial_0 SM),  \ \ c_0 > 0, 
\end{align*} 
then 
\begin{align*}
	\bt^z (\log \bt)^k  = (\bt')^z c^z_0 \left( 1 + \sum_{p\ge 1} (\bt')^{2p} \frac{c_{2p}}{c_0} \right)^z \!\!\! \left( \log \bt' + \log c_0 + \log \left( 1 + \sum_{p\ge 1} (\bt')^{2p} \frac{c_{2p}}{c_0} \right)\!\!  \right)^k.
\end{align*}
All terms are of the form $(\log \bt')^{\ell} (\bt')^{z+2p}$ for $0\le \ell \le k$ and $p\ge 0$, and these exponent pairs all lie in $E$.

\F{(b) By Lemma \ref{lem:bdfexpansion}, at the level of index sets and relative to an $\alpha$-bdf $\bt$, multiplying an element of $\A_{\phg,\alpha}^E$ by $f\in C_{\alpha,+}^\infty$ will induce operations of the form $(z,k)\to (z+2p,k)$ (where $2p\in 2\Nm_0$ denotes an exponent appearing in the Taylor expansion of $f$) for $(z,k)\in E$, and by virtue of property $(c')$, the terms generated remain within the index set $E$.}
\end{proof}

Finally, we show that \eqref{eq:combinatorics}, originally established for the geodesic distance function $\rho$
and geodesic length function $\tau$, holds for any bdf for $\del M$ and any $\alpha$-bdf for $\partial_0 SM$. 

\begin{Proposition}\label{prop:combinatorics2}
Let $(M,g)$ be convex and non-trapping, and $\rho_1$ a bdf for $\partial M$ and $\tau_1$ an $\alpha$-bdf for $\partial_0 SM$. 
Then there exists a smooth positive function $F_1$ on $\del SM\times [0,1]$ such that 
\begin{align*}
\rho_1 (\Upsilon(x,v,u)) = \tau_1^2 (x,v) u(1-u) F_1(x,v,u).
\end{align*}
for all $(x,v,u)\in \partial SM\times [0,1]$.
\end{Proposition}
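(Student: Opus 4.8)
The plan is to reduce the general statement to the already-established relation \eqref{eq:combinatorics} for the distinguished pair $(\rho,\tau)$, using that both ratios $\rho_1/\rho$ and $\tau_1/\tau$ are smooth and positive with controlled parity. First I would recall that $\rho_1/\rho$ extends to a smooth, strictly positive function on $M$ (two bdf's for the same boundary hypersurface differ by such a factor); call it $G_0$, so $\rho_1 = G_0\,\rho$. Pulling back by $\Upsilon$ and applying \eqref{eq:combinatorics} gives
\begin{align*}
\rho_1(\Upsilon(x,v,u)) = (G_0\circ\Upsilon)(x,v,u)\, F(x,v,u)\, \tau^2(x,v)\, u(1-u),
\end{align*}
with $F$ smooth and positive on $\partial SM\times[0,1]$. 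Since $G_0$ is smooth and positive on $M$ and $\Upsilon$ maps $\partial SM\times[0,1]$ into (the closure of) $SM$, the composite $G_0\circ\Upsilon$ is smooth and strictly positive on $\partial SM\times[0,1]$; hence so is the product $(G_0\circ\Upsilon)\,F$.

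Next I would handle the $\alpha$-bdf. Since $\tau_1$ and $\tau$ are both $\alpha$-bdf's for $\partial_0 SM$, the ratio $\tau_1/\tau$ is, by Lemma \ref{lem:sum} and its proof (or directly by Lemma \ref{lem:bdfexpansion}), a smooth strictly positive element of $C^\infty_{\alpha,+}(\partial_+ SM)$; in particular $\tau^2 = H\,\tau_1^2$ for some smooth strictly positive $H$ on $\partial_+ SM$, and by the same reasoning ($H = (\tau/\tau_1)^2$ is a square of a positive even function) $H$ extends smoothly and positively across $\partial_0 SM$ to all of $\partial SM$. Actually one must be slightly careful here: $\tau$ itself vanishes on $\partial_0 SM$, so "$\tau^2 = H\tau_1^2$" should be read as the statement that $H := (\tau/\tau_1)^2$, a priori defined where $\tau_1 \ne 0$, extends smoothly and positively across $\partial_0 SM$ — which it does since $\tau/\tau_1 \in C^\infty_{\alpha,+}$ is nonvanishing near $\partial_0 SM$. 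Substituting, I obtain
\begin{align*}
\rho_1(\Upsilon(x,v,u)) = \underbrace{(G_0\circ\Upsilon)(x,v,u)\, F(x,v,u)\, H(x,v)}_{=: F_1(x,v,u)}\; \tau_1^2(x,v)\, u(1-u),
\end{align*}
and $F_1$ is a product of smooth strictly positive functions on $\partial SM\times[0,1]$, hence itself smooth and strictly positive there, which is exactly the claim.

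The only genuinely delicate point — the "main obstacle" such as it is — is the behavior at the glancing corner $\partial_0 SM\times[0,1]$, where both $\tau$ and $\tau_1$ vanish and $\Upsilon$ collapses the interval. One must check that the extension of $H=(\tau/\tau_1)^2$ is smooth and positive there, not merely on $\partial_+ SM$ away from $\partial_0 SM$; this is guaranteed because $\tau/\tau_1$ is a nonvanishing function in $C^\infty_{\alpha,+}(\partial_+ SM)$, which by Lemma \ref{lem:bdfexpansion} has a full even Taylor expansion at $\partial_0 SM$ with nonzero constant term, so its square extends smoothly and positively across $\partial_0 SM$ (equivalently, across $G_2$ in $\partial SM\times[0,1]$). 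Likewise one should note that $G_0\circ\Upsilon$ is well-defined and smooth up to $\partial_0 SM\times[0,1]$ precisely because $\Upsilon$ extends smoothly to $\partial SM\times[0,1]$ (as recorded at the start of Section 3) and $G_0\in C^\infty(M)$; on $\partial_0 SM\times[0,1]$ it takes the value $G_0$ at the corresponding boundary point. With these observations in place the factorization is immediate, and consistency with property (c) of the functions $F,F_1$ (e.g.\ that $F_1$ respects the symmetry $F_1(\alpha(x,v),u)=F_1(x,v,1-u)$, should one want it) follows from the analogous property of $F$ together with the $\alpha$-evenness of $H$ and the $\Gamma$-invariance $\Upsilon\circ\Gamma=\Upsilon$.
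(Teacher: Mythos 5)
Your proposal is correct and follows essentially the same route as the paper, which simply combines \eqref{eq:combinatorics} with the facts that $\rho_1/\rho\in C^\infty(M)$ and $\tau/\tau_1\in C_{\alpha,+}^\infty(\partial_+ SM)$ are everywhere nonvanishing; you have merely spelled out the details (in particular the smooth positive extension of $(\tau/\tau_1)^2$ across $\partial_0 SM$ via Lemma \ref{lem:bdfexpansion}) that the paper leaves implicit.
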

\begin{proof}
This follows immediate by combining \eqref{eq:combinatorics} with the facts that $\rho/\rho_1 \in C^\infty(M)$ 
and $\tau/\tau_1\in C_{\alpha,+}^\infty(\partial_+ SM)$ are both everywhere nonvanishing.
\end{proof}

\section{Proofs of the main results} \label{sec:proofmain}

\subsection{Mapping properties of weighted X-ray transform - Proof of Theorem \ref{thm:Calphamapping} and Proposition \ref{prop:weightedI0}} \label{sec:proofmainXray}

\begin{proof}[Proof of Theorem \ref{thm:Calphamapping}] Fix the bdf $\rho$ for $M$ (which induces a bdf for $SM$), and write
    \begin{align*}
	f \sim \sum_{(z,k)\in E} \rho^z (\log \rho)^k f_{z,k}, \qquad f_{z,k}\in C^\infty(\partial SM).
    \end{align*}
    If $h \in C^\infty(\resp{\partial}SM)$,  then setting $t = \tau u$ and using that $\rho\circ \Upsilon = \tau^2 u(1-u) F$ for 
some strictly positive $F\in C^\infty(\partial_+ SM \times [0,1])$, we have
    \begin{align*}
	I^\phi (\rho^z (\log \rho)^k h) &= \tau \int_0^1 (\rho\circ\Upsilon)^z (\log \rho\circ \Upsilon)^k h(\Upsilon) \phi(\Upsilon)\ du \\
	&= \tau^{2z + 1} \int_0^1 (u(1-u))^z (F^z) (\log (\tau^2 u(1-u) F))^k h(\Upsilon) \phi(\Upsilon)\ du,
    \end{align*}
Expanding the logarithm, we arrive at 
    \begin{align}
	I^\phi (\rho^z (\log \rho)^k h) &= \tau^{1+2z} \sum_{\ell=0}^k 2^k \binom{k}{\ell} (\log \tau)^\ell a_{1+2z, \ell}(x,v),   
	\label{eq:Iw}
    \end{align}
    where, for $0\le \ell\le k$, 
    \begin{align*}
	a_{1+2z, \ell}(x,v) = \int_0^1 (u(1-u))^z F(x,v,u)^z (\log(u(1-u)F))^{k-\ell} h(\Upsilon) \phi(\Upsilon)\ du.  
    \end{align*}
Using the symmetries $\Upsilon(\alpha(x,v), u) = \Upsilon(x,v,1-u)$ and $F(\alpha(x,v),u) = F(x,v,1-u)$, we see that 
$a_{1+2z,\ell} \in C^\infty_{\alpha,+}(\partial_{+}SM)$\resp{, thus $a_{1+2z,\ell}$ contributes with an asymptotic expansion of the form $a_{1+2z,\ell} \sim \sum_{p=0}^\infty \tau^{2p} a'_{1+2z,\ell,p}(\omega)$ with $a'_{1+2z,\ell,p}\in C^\infty(\partial_0 SM)$}. Hence \eqref{eq:Iw} is an asymptotic sum involving only the terms $\tau^{1+2z+2p}(\log \tau)^\ell$, $0\le \ell\le k$ and $p\ge 0$. Since $\tau$ is an $\alpha$-bdf, we see that the corresponding index set generated takes the asserted form.

Finally, the most singular term  is when $\ell = k$,  and its coefficient at $(y,\resp{w}) \in \del_0 SM$ equals
\[
\int_0^1 \!\! 2^k ( u(1-u))^z F(y,\resp{w,u})^z h(\Upsilon) \phi(\Upsilon)\, du = 2^k B(z+1, z+1) \two_y^z(\resp{w}, \resp{w}) \phi(y,\resp{w}) h(y,\resp{w}) .
\]
This uses the identity (c) written just before Lemma~\ref{lem:bfibrations}.
\end{proof}

\begin{proof}[Proof of Proposition \ref{prop:weightedI0}] Let $\rho$ be a bdf on $M$ and $\bt$ an $\alpha$-bdf on $\partial_+ SM$. By Proposition \ref{prop:combinatorics2}, there exists a smooth positive function $F$ on $\del SM\times [0,1]$ such that 
\begin{align}
\rho (\Upsilon(x,v,u)) = \bt^2 (x,v) u(1-u) F(x,v,u).
\label{eq:combitemp}
\end{align}
Assume that $F$ is uniformly bounded by some constant $F_\infty$. Fixing some constant $\beta$, to be chosen later, we have
    \begin{align*}
	\int_{\partial_+ SM} (I_0 &(\rho^\gamma f))^2 \bt^\beta \, \mu \, d\Sigma^{2d-2} \ldots\\
	&= \int_{\partial_+ SM} \left( \int_0^\tau (f\cdot \rho^\gamma) (\varphi_t)\ dt \right)^2 \bt^{\beta} \, \mu\ d\Sigma^{2d-2} \\
	&= \int_{\partial_+ SM} \left( \int_0^1 (f\cdot \rho^\gamma)\circ \Upsilon\ du \right)^2 \bt^{\beta} \tau^2 \, \mu\ d\Sigma^{2d-2} \qquad (t=\tau u)\\
    &\le \int_{\partial_+ SM} \left( \int_0^1 (f^2 \rho^\delta)\circ \Upsilon\ du\right) \left( \int_0^1 \rho^{2\gamma-\delta}\circ \Upsilon\ du \right) \bt^{\beta} \tau^2 \mu\ d\Sigma^{2d-2}.
    \end{align*}
Now, using \eqref{eq:combitemp}, 
    \begin{align*}
	\int_0^1 \rho^{2\gamma-\delta}\circ \Upsilon\ du &= \bt^{4\gamma-2\delta} \int_0^1 (u(1-u))^{2\gamma-\delta} F(x,v,u)^{2\gamma-\delta}\ du \\
	&\le \bt^{4\gamma-2\delta} F_{\infty}^{2\gamma-\delta} \int_0^1 (u(1-u))^{2\gamma-\delta}\ du.
    \end{align*}
The $u$-integral is finite provided $2\gamma-\delta >-1$, so we arrive at 
    \begin{align*}
	\int_{\partial_+ SM} (I_0 (\rho^\gamma f))^2 \bt^\beta \mu\ d\Sigma^{2d-2} \le C \int_{\partial_+ SM} \left(\int_0^\tau (f^2 \rho^\delta) (\varphi_t)\ dt\right) \bt^{\beta+4\gamma+1-2\delta} \mu\ d\Sigma^{2d-2},
    \end{align*}
    where $C := F_\infty^{2\gamma-\delta} B(2\gamma-\delta+1, 2\gamma-\delta+1) (\tau/\bt)_\infty$. Here $B$ is the Beta function as defined in \eqref{eq:Beta}, and $\tau/\bt$ is bounded above since they are both $\alpha$-bdf's. We now set $\beta = 2\delta-4\gamma-1$ and use Santal\'o's formula to conclude that
\begin{align*}
	\int_{\partial_+ SM} (I_0 (\rho^\gamma f))^2 \bt^\beta \, \mu \, d\Sigma^{2d-2} \le C \int_{SM} f^2 \rho^{\delta}\ \resp{d}\Sigma^{2d-1} \le 
	C\, \mathrm{Vol}(\Sm^{d-1}) \int_M f^2 \rho^\delta\ \resp{\dM}.
    \end{align*}
This is the desired conclusion.
\end{proof}

\subsection{Index sets on $M$ via Mellin transform} \label{sec:MellinBeta}

We now undertake a closer examination of the integrals which give the coefficients of the terms in the expansions appearing in 
Theorem~\ref{thm:I0starIDX}, with the dual goals of showing that certain of these vanish, and of computing the leading coefficients.
A closer understanding of the Beta function and certain generalizations of it are the crucial components in this analysis.

\subsubsection{Beta-type functions}\label{sec:beta}

\paragraph{Facts about the Beta function.} The Beta function is given in terms of the Gamma function by the classical formula 
$B(z,z) = \frac{\Gamma(z)\Gamma(z)}{\Gamma(2z)}$. It is also well known that $\Gamma(z)$ has simple poles at the
negative integers, with $\lim_{z\to -n} (z+n) \Gamma(z) = \mathrm{Res}(\Gamma, -n) = \frac{(-1)^n}{n!}$, hence $B(z,z)$ also
has only simple poles at negative integers. Moreover, for $n \in \mathbb N_0$, 
\begin{align}
    \lim_{z\to -n} (z+n) B(z,z) = 2\lim_{z\to -n} \frac{(z+n) \Gamma(z) (z+n)\Gamma(z)}{2(z+n) \Gamma(2z)} = 
2 \frac{( (-1)^n/n!)^2}{(-1)^{2n}/ (2n)! } = 2 \binom{2n}{n}.
\label{eq:residueB}
\end{align}
In addition, $B(z,z)$ has a simple zero at every $z \in -\Nm_0-\frac{1}{2}$, with 
\begin{align*}
    \lim_{z\to -n-1/2} \frac{B(z,z)}{z+n+1/2} &= 2\lim_{z\to -n-1/2} \frac{\Gamma(z)^2}{(2z+2n+1)\Gamma(2z+2n+1)} \\
    &= 2\Gamma(-n-1/2)^2 \frac{1}{\lim_{\zeta\to -2n-1} \Gamma(\zeta+2n+1)(\zeta+2n+1)} \\
    &= -2 (2n+1) ! \, \Gamma(-n-1/2)^2.
\end{align*}
Since $\Gamma(-n-1/2) = \frac{(-4)^{n+1} (n+1)!}{(2n+2)!} \sqrt{\pi}$, we obtain, finally, 
\begin{align}
    \lim_{z\to -n-1/2} \frac{B(z,z)}{z+n+1/2} = -\frac{4^{2n+2}}{n+1} \, \pi \, \binom{2n+2}{n+1}^{-1}. 
    \label{eq:antiresidue}
\end{align}

\paragraph{Generalized Beta functions.}
We now consider expressions of the form 
\begin{align}
    \beta[f](z) := \int_0^1 f(u) (u(1-u))^{z-1}\ du, 
    \label{eq:betatype}
\end{align}
where $f$ is continuous on $[0,1]$ and satisfies $f(u) = f(1-u)$.  The integral defining $\beta[f]$ converges absolutely and defines a holomorphic function
on the open right half plane $\{\text{Re}(z)>0\}$. If $f$ has higher regularity, it admits a meromorphic extension to a larger half-plane: 
\begin{Proposition}\label{prop:merocont}
	If $f\in C^k([0,1])$, then $\beta[f]$ continues meromorphically to $\{\text{Re}(z)>-k\}$, with \resp{at most} simple poles at $0, -1, -2, \dots, -k+1$. 
\end{Proposition}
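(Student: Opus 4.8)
\textbf{Proof plan for Proposition \ref{prop:merocont}.}
The plan is to isolate the singular behavior of the integrand at the two endpoints $u=0$ and $u=1$ and to handle it by the standard trick of subtracting off finitely many Taylor terms. First I would write $u(1-u) = u \cdot (1-u)$ and split the integral at $u=1/2$ as $\beta[f](z) = \int_0^{1/2} + \int_{1/2}^1$. On $[0,1/2]$ the factor $(1-u)^{z-1}$ is smooth (indeed real-analytic) in $u$ uniformly for $z$ in compact sets, so the only source of non-holomorphicity is the factor $u^{z-1}$ near $u=0$; by the symmetry $f(u)=f(1-u)$, the piece on $[1/2,1]$ is the mirror image and contributes an identical meromorphic structure at $u=1$, so it suffices to analyze one endpoint and double (up to the smooth correction).

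The key step is the Taylor subtraction at $u=0$. Write $g(u) := f(u)(1-u)^{z-1}$, which for fixed $z$ is $C^k$ on $[0,1/2]$ jointly, and expand $g(u) = \sum_{j=0}^{k-1} c_j(z) u^j + r_k(u,z)$ where $c_j(z)$ are polynomial (hence entire) in $z$ coming from the product of the Taylor coefficients of $f$ and of $(1-u)^{z-1}$, and the remainder satisfies $|r_k(u,z)| \le C u^k$ with $C$ locally uniform in $z$. Then
\begin{align*}
\int_0^{1/2} g(u) u^{z-1}\, du = \sum_{j=0}^{k-1} c_j(z) \int_0^{1/2} u^{z-1+j}\, du + \int_0^{1/2} r_k(u,z) u^{z-1}\, du.
\end{align*}
The elementary integrals evaluate to $c_j(z) \frac{(1/2)^{z+j}}{z+j}$, which is meromorphic on all of $\Cm$ with a simple pole exactly at $z = -j$, $j=0,1,\dots,k-1$; the residue there is $c_j(-j)(1/2)^{0} = c_j(-j)$, which in general is nonzero (it is a universal combination of $f(0)$ and its derivatives). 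The remainder integral $\int_0^{1/2} r_k(u,z)\, u^{z-1}\, du$ converges absolutely for $\mathrm{Re}(z) > -k$ and, by dominated convergence / Morera, is holomorphic there. Adding the analogous expansion at $u=1$ and the genuinely holomorphic cross terms, we conclude that $\beta[f](z)$ extends meromorphically to $\{\mathrm{Re}(z) > -k\}$ with at worst simple poles at $0,-1,\dots,-k+1$.

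The only mildly delicate point — and the one I would be most careful about — is checking that the $z$-dependence of the Taylor coefficients $c_j(z)$ is entire and that the remainder estimate $|r_k(u,z)| \le C(z) u^k$ can be taken locally uniform in $z$: this is because $(1-u)^{z-1} = \exp((z-1)\log(1-u))$ is jointly smooth in $(u,z)$ on $[0,1/2]\times\Cm$, so its Taylor coefficients in $u$ are entire in $z$ and its $k$-th order remainder is controlled uniformly on compact $z$-sets by Taylor's theorem with integral remainder. Once that is in hand, everything else is the routine endpoint analysis above. (Note that no poles appear at half-integers here; the cancellation producing zeros of $B(z,z)$ at $-\Nm_0 - 1/2$ is a feature of the \emph{specific} cancellation between the two endpoints when $f\equiv 1$, not of the general meromorphic continuation, and is treated separately.)
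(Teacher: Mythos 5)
Your argument is correct, but it takes a genuinely different route from the paper's. The paper never splits the integral at the endpoints; instead it exploits the algebraic identity $(u-1/2)^2 = \tfrac14 - u(1-u)$ (equation \eqref{eq:idbeta1}) and a single integration by parts to derive the functional equation \eqref{eq:idbeta2}, $\beta[f](z) = \tfrac{2}{z}\bigl((2z+1)\,\beta[f](z+1) + \beta[(u-1/2)f'](z+1)\bigr)$, which expresses $\beta[f]$ in terms of Beta-type functionals evaluated at $z+1$; the prefactor $2/z$ exhibits the simple pole at $z=0$, and iterating $k$ times (each step consuming one derivative of $f$, and note that $(u-1/2)f'$ is again symmetric under $u\mapsto 1-u$, so the recursion stays in the admissible class) yields the continuation to $\{\mathrm{Re}(z)>-k\}$. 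Your endpoint Taylor-subtraction is the more standard regularization and has the advantage of giving the residue at $z=-j$ directly as a universal combination of $f(0),\dots,f^{(j)}(0)$ (equivalently, by symmetry, of the derivatives at $u=1$), which is more explicit than unwinding the iterated recursion. What the paper's approach buys is reuse of the functional equation itself: \eqref{eq:idbeta2} is invoked again in the proof of Lemma \ref{lem:B} to propagate the $\alpha$-invariance of $\wtB(\cdot\,;z)$ strip by strip across the continuation, so having it in closed form is not merely a device for this proposition. One small correction to your closing aside: the zeros of $B(z,z)$ at $-\Nm_0-\tfrac12$ are not a cancellation of candidate poles between the two endpoints --- neither endpoint contributes a pole at a half-integer in either approach --- they are genuine zeros of the continued function, computed in Section \ref{sec:beta} from the Gamma-function representation, and it is their interaction with the half-integer poles of the $\tau$-integral that drives the pole cancellation in Theorem \ref{thm:I0starIDX}.
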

\begin{proof}[Proof of Proposition]
The identity $(u-1/2)^2 = 1/4 - u(1-u)$ leads to the recursion formula
\begin{align}
\beta[ (u-1/2)^2 f](z) = \frac{1}{4} \beta[f](z) - \beta[f] (z+1). 
\label{eq:idbeta1}
\end{align}

Thus if $f\in C^1([0,1])$, then 
    \begin{align}
	\beta[f](z) = \frac{2}{z} \left( (2z+1) \beta[f] (z+1) + \beta[(u-1/2)f'] (z+1) \right), \qquad \text{Re}(z)>0;
\label{eq:idbeta2}
\end{align}
this defines the continuation to $\{\text{Re}(z)>-1\}$ with a simple pole at $z=0$ and residue there 
$2\beta[f](1) + \beta[ (u-1/2)f'](1)$. This formula is proved by integration by parts: 
    \begin{align*}
	\beta[(u-1/2)f'](z+1) &= \int_0^1 (u-1/2)f'(u) (u(1-u))^z\ du \\
	&= - \int_0^1 f(u) [(u(1-u)^z + z (u-1/2)(1-2u) (u(1-u))^{z-1} ]\ du \\
	&= - \beta[f] (z+1) + 2z \beta[(u-1/2)^2 f](z) \\
	&\!\!\stackrel{\eqref{eq:idbeta1}}{=} - \beta[f] (z+1) + 2z ( \frac{1}{4} \beta[f](z) - \beta[f] (z+1)), 
    \end{align*}   
The extension to larger values of $k$ follows by repeated application of \eqref{eq:idbeta2}.    
\end{proof}

\subsubsection{Mellin functionals} \label{sec:Mellin}

\resp{Let $(M,g)$ be a smooth Riemannian manifold with boundary $\partial M$, volume form $\dM$ and boundary defining function $\rho$.} First recall that a $b$-density is a density with a `simple pole' in the bdf, i.e., a smooth multiple of $\frac{\dM}{\rho}$.
Fix a cutoff function $\chi \in C_c^\infty[0,\varepsilon)$ with $\chi\equiv 1$ near $0$. Then for any $H \in \A^t$, define the Mellin functional 
\begin{align}
	H_{\cal M}(y; z) := \int_0^\varepsilon \rho^z H(\rho,y)\chi(\rho) \frac{d\rho}{\rho}\ dA(y),
    \label{eq:Mellinh}
\end{align}
This is a (weakly) holomorphic function $\{Re(z)>-t\} \mapsto \D^\prime (\partial M)$. 

If $H$ is polyhomogeneous, a standard result relates the index set of $H$ with the poles of $H_{\cal M}$. 
\begin{Lemma}\label{lem:Mellin}
	The function $H$ is polyhomogeneous with index set $E$, where $\inf E \geq t$, if and only if $H_{\cal M}$ extends meromorphically to the entire complex plane, with a pole of order $k+1$ at $-\gamma$ 
	whenever $(\gamma,k)\in E$, and where $\eta \mapsto H_{\cal M}( \xi + i\eta, y)$ decays rapidly for each $\xi$ (for $z = \xi + i\eta$ outside the \resp{set of poles}). Specifically, given $a\in C^\infty(\partial M)$, the term $a(y) \rho^\gamma \log^k \rho$ is present in the asymptotic expansion of $H$ off of $\partial M$ if and only if the Laurent expansion of $H_{\cal M}$ about the pole $-\gamma$ takes the form $H_{\cal M}(z,y) = \frac{(-1)^k k!}{(z+\gamma)^{k+1}} (a(y) + {\cal O} (z+\gamma))$.
\end{Lemma} 

The proof is well-known, see \cite{Melrose1992}, \cite[Appendix A]{Mazzeo1991}.

\resp{
\begin{Remark}
	The rapid decay of $H_{\cal M}$ in vertical directions mirrors the conormality of $H$. In other words, if $(\rho\partial_\rho)^\ell H$ lies in some
	fixed function space for all $\ell \geq 0$, then $\zeta^\ell H_{\cal M}$ lies in the Fourier  transform of that space.   The functions encountered
	throughout this paper are usually very easy to recognize as conormal; it is their polyhomogeneous nature which is our main focus. 
	\label{decay}
\end{Remark}
}

\subsection{Proof of Theorem \ref{thm:I0starIDX}} \label{sec:proofbackprojection}

\begin{proof}[Proof of Theorem \ref{thm:I0starIDX}]
Suppose $g$ takes the form 
\begin{align*}
	g = a(\omega) \resp{\chi(\tau)} \tau^\gamma \log^k \tau, \qquad a\in C^\infty(\partial_0 SM),
\end{align*}
where $\chi\in C^\infty([0,\epsilon))$ with $1_{[0,\epsilon/3]} \le \chi \le 1_{[0,2\epsilon/3]}$ for $\epsilon$ small enough. \resp{This function is obviously conormal, then as a result of Lemma \ref{lem:Irewrite} and \cite[Theorems 3, 5]{Melrose1992}, so is $\adj g$. Then by Remark~\ref{decay} the decay of its Mellin transform in the imaginary direction is clear.} 

We now determine the index set of $\adj  g \frac{\dM}{\rho}$ and the coefficients of its expansion, at least in the range where these are local. Recall that, by Lemma \ref{lem:Irewrite},
\begin{align*}
	\adj  g \frac{\dM}{\rho} &= \hpi_\star \left( \resp{\chi(\tau)} \tau \mu g  \frac{d\Sigma^{2d-2} du}{\tau^2 u(1-u) F} \right) \\
	&= \hpi_\star \left(\resp{\chi(\tau)} \frac{\tau g}{F} \frac{\mu \, d\Sigma^{2d-2}}{\tau^2} \frac{du}{u(1-u)} \right)  \\
	&= \hpi_\star \left(\resp{\chi(\tau)}a(\omega) \frac{\tau^{\gamma+1} \log^k \tau}{F} \frac{\mu \, d\Sigma^{2d-2}}{\tau^2} \frac{du}{u(1-u)} \right)\resp{.} 
\end{align*}
By Lemma \ref{lem:Mellin}, if $h\in C^\infty(\partial M)$, we have

\begin{align}
	\langle (\adj &g)_{\cal M}, h \rangle (z) \ldots  \nonumber\\
	&= \int_{\partial M} h(y) \int_0^\varepsilon \rho^z \hpi_\star 
    \left(a(\omega) \frac{\tau^{\gamma+1} \log^k \tau}{F} \chi(\tau) \frac{\mu \, d\Sigma^{2d-2}}{\tau^2} \frac{du}{u(1-u)} \right) \nonumber \\
    &= \int_{\partial_+ SM} \int_0^1 h(y\circ \hpi) \tau^{2z+\gamma+1} \log^k \tau (u(1-u))^z F^{z-1} \chi(\tau) a(\omega) \frac{du}{u(1-u)} \frac{\mu \, d\Sigma^{2d-2}}{\tau^2} \nonumber \\
    &= \int_{\partial_+ SM} a(\omega) \tau^{2z+\gamma+1}  \log^k\tau \chi(\tau) B(\tau,\omega; z) \frac{\mu \, d\Sigma^{2d-2}}{\tau^2}, \label{eq:calculation}
\end{align}
where, for $(\tau,\omega)\in \partial_+ SM$, we have defined
\begin{align}
	B(\tau,\omega; z) := \int_0^1 h(y\ \resp{ \circ\ \pi}\circ\Upsilon(x,v,u)) F(x,v,u)^{z-1}  (u(1-u))^{z-1}\ du.
\label{eq:B}
\end{align}
Note that we used $(x,v)$ and $(\tau,\omega)$ interchangeably above. Certainly $B$ is holomorphic for $Re(z)>0$. 

The following three lemmas will be used to complete the proof of Theorem \ref{thm:I0starIDX}. 
Their proofs appear in the two subsequent sections \ref{sec:BetaFacts} and \ref{sec:volumeFactors}. 

\begin{Lemma}
\label{lem:B}
For all $z$ with $Re(z)>0$, the function $B(\cdot; z)$ belongs to $C_{\alpha,+}^\infty(\partial_+ SM)$.  Furthermore, 
$B(\cdot; z)$ continues meromorphically to $\Cm$ as a $C_{\alpha,+}^\infty(\partial_+ SM)$-valued function with \resp{at most} simple poles at $-\Nm_0$. 
\end{Lemma}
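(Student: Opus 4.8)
The plan is to analyze the integral \eqref{eq:B} defining $B(\tau,\omega;z)$ by the same mechanism used to prove Theorem \ref{thm:Calphamapping} and Proposition \ref{prop:merocont}. First I would observe that for $\mathrm{Re}(z)>0$ the integrand is continuous on $[0,1]$ in $u$ and depends smoothly on $(x,v)\in\partial_+ SM$ (with $F$ smooth and strictly positive by \eqref{eq:combinatorics} and items (a)--(c) following it), so $B(\cdot;z)\in C^\infty(\partial_+ SM)$ and holomorphy in $z$ is immediate by differentiation under the integral sign.

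The membership in $C_{\alpha,+}^\infty(\partial_+ SM)$ is the crucial structural point, and I would deduce it exactly as in the proof of Theorem \ref{thm:Calphamapping}: using the symmetries $\Upsilon(\alpha(x,v),u)=\Upsilon(x,v,1-u)$ and $F(\alpha(x,v),u)=F(x,v,1-u)$ together with the fact that the weight $(u(1-u))^{z-1}$ is itself invariant under $u\mapsto 1-u$, the substitution $u\mapsto 1-u$ shows that $B(\alpha(x,v);z)=B(x,v;z)$. Combined with the smoothness of the even extension across $\partial_0 SM$ (the integrand extends smoothly to $\partial SM\times[0,1]$ using the smooth extension of $F$ and of $\Upsilon$), this gives $A_+(B(\cdot;z))\in C^\infty(\partial SM)$, i.e.\ $B(\cdot;z)\in C_{\alpha,+}^\infty(\partial_+ SM)$.

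For the meromorphic continuation I would write $B(\tau,\omega;z)=\beta[f_{\tau,\omega}](z)$ in the notation of \eqref{eq:betatype}, where $f_{\tau,\omega}(u):=u(1-u)\,h(y\circ\Upsilon(x,v,u))\,F(x,v,u)^{z-1}$ — wait, more carefully: to match \eqref{eq:betatype} I set $f_{\tau,\omega,z}(u) := u(1-u)\cdot h(y\circ\Upsilon(x,v,u))F(x,v,u)^{z-1}$ so that the exponent becomes $(u(1-u))^{z-1}$ times a genuinely continuous (indeed smooth, for fixed $z$) even function of $u$; since $f$ is $C^\infty$ in $u$, Proposition \ref{prop:merocont} applies with $k$ arbitrary and yields meromorphic continuation to all of $\Cm$ with at most simple poles at $0,-1,-2,\dots$. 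The recursion \eqref{eq:idbeta2} shows that each continuation step multiplies by holomorphic-in-$z$ operations (integration by parts in $u$, multiplication by polynomials in $z$), and since $f$ and all its $u$-derivatives remain smooth in $(x,v)$ and holomorphic in $z$, the continued $B(\cdot;z)$ stays $C^\infty(\partial_+ SM)$-valued; the $\alpha$-evenness is preserved because each step respects the $u\mapsto 1-u$ symmetry (the recursion is built from the $\alpha$-symmetric quantities $u(1-u)$ and $(u-1/2)$-odd pieces paired with $f'$, which flips sign consistently). Hence $B(\cdot;z)$ continues as a $C_{\alpha,+}^\infty(\partial_+ SM)$-valued meromorphic function with simple poles contained in $-\Nm_0$.

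The main obstacle I anticipate is bookkeeping the dependence on $(x,v)$ — in particular checking that the meromorphic continuation obtained from Proposition \ref{prop:merocont} is \emph{uniform} enough in $(x,v)$ to land in the Fr\'echet space $C^\infty(\partial_+ SM)$ rather than merely pointwise, and that the $\alpha$-symmetry genuinely survives each integration-by-parts step. Both are handled by noting that the recursion \eqref{eq:idbeta2} is an explicit algebraic/differential identity whose ingredients ($h\circ\Upsilon$, $F$, and their $u$-derivatives) are smooth on the compact manifold-with-corners $\partial_+SM\times[0,1]$ and $\alpha$-equivariant, so no analytic subtlety beyond what is already in Proposition \ref{prop:merocont} is introduced.
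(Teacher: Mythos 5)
Your argument follows the paper's proof essentially verbatim: $\alpha$-evenness via the substitution $u\mapsto 1-u$ combined with the symmetries $\Upsilon(\alpha(x,v),u)=\Upsilon(x,v,1-u)$ and $F(\alpha(x,v),u)=F(x,v,1-u)$, smoothness from the extension of the integrand to $\partial SM\times[0,1]$, and meromorphic continuation by Proposition \ref{prop:merocont} with (inessential) smooth parameter dependence, the recursion \eqref{eq:idbeta2} preserving the symmetry strip by strip. One minor bookkeeping slip: the correct identification is $B(\cdot;z)=\beta[f](z)$ with $f(u)=h(y\circ\Upsilon(x,v,u))\,F(x,v,u)^{z-1}$ and \emph{no} extra factor of $u(1-u)$ — with your $f$ one gets $\beta[f](z)=\int_0^1 hF^{z-1}(u(1-u))^z\,du$, i.e.\ the argument is shifted to $z+1$ and the poles would land at $-\Nm_0-1$ rather than $-\Nm_0$.
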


By the first part of this, 
\begin{align}
B(\tau,\omega; z) \sim \sum_{k=0}^\infty \tau^{2k} B_{k}(\omega; z), \quad \mbox{where}\quad B_k(\omega; z) = 
\frac{1}{(2k)!}\partial_\tau^{2k}|_{\tau=0} B(\tau,\omega; z).
\label{eq:Bexpansion}
\end{align}
In particular, combining \eqref{eq:B} with the fact that $F(x,v,u) = \two_x(v,v)$ and $\Upsilon(x,v,u) = (x,v)$ for all $(x,v)\in \partial_0 SM$ and $u\in [0,1]$, we directly have 
\begin{align}
B_0(\omega; z) = h(\pi(\omega)) \two(\omega)^{z-1} B(z,z).
\label{eq:B0}
\end{align}
Using the structure of the poles and zeroes of the Beta function described in Sec. \ref{sec:beta}, we can say a bit more:
\begin{Lemma}\label{lem:Bk}
	For each $k$, $B_k(\omega; z)$ has \resp{at most} simple poles at $-\Nm_0$ and \resp{at least} simple zeros at $-\Nm_0 - \frac{1}{2} - k$.
\end{Lemma}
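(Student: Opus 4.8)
\textbf{Plan for the proof of Lemma \ref{lem:Bk}.}

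The plan is to track how the poles and zeros of the Beta function propagate to the higher Taylor coefficients $B_k(\omega;z)$ by expanding the full integrand of \eqref{eq:B} in powers of $\tau$ and recognizing each resulting $u$-integral as a generalized Beta function $\beta[\,\cdot\,](z)$ in the sense of \eqref{eq:betatype}. First I would use the scattering-friendly coordinates $(\tau,\omega)$ from Lemma \ref{lem:scatcoords}: near $\partial_0 SM$ the maps $(x,v,u)\mapsto \Upsilon(x,v,u)$ and $F(x,v,u)$ are smooth, and by property (c) before Lemma \ref{lem:bfibrations} we have $F(x,v,0)=\two_x(v,v)$, $\Upsilon(x,v,0)=(x,v)$ when $\tau=0$. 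Moreover, the symmetries $\Upsilon(\alpha(x,v),u)=\Upsilon(x,v,1-u)$ and $F(\alpha(x,v),u)=F(x,v,1-u)$, combined with the fact that $\tau$ is an $\alpha$-bdf (so $\tau\mapsto-\tau$ is the reflection through $\partial_0 SM$), mean that the integrand $G(\tau,\omega,u;z):=h(y\circ\Upsilon)\,F^{z-1}$ is \emph{even in $\tau$ and symmetric under $u\mapsto 1-u$} once we fix $\omega$; this is exactly why $B(\cdot;z)\in C^\infty_{\alpha,+}$ in Lemma \ref{lem:B}. Taylor-expanding in $\tau$, we get $G(\tau,\omega,u;z)\sim\sum_{j\ge 0}\tau^{2j}g_j(\omega,u;z)$ with each $g_j(\cdot,u;z)$ smooth in $u$, symmetric under $u\mapsto 1-u$, and — crucially — polynomial (hence entire) in a shifted variable: writing $F=\two_\omega(1+\text{(corrections in }\tau^2\text{ and }u(1-u)\text{)})$, each $g_j$ is a finite sum of terms of the form (smooth in $\omega$)$\cdot\,\two_\omega^{z-1}\cdot(z-1)(z-2)\cdots\cdot(u(1-u))^{m}$ with $m\ge 0$ an integer depending on the term.

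The next step is to integrate term by term: $B_k(\omega;z)=\sum_{j\le k}\int_0^1 (u(1-u))^{z-1}\,g_j(\omega,u;z)\,du$, where the constraint $j\le k$ comes from the fact that $g_j$ itself may contain a factor $(u(1-u))^m$ which shifts the effective power, but after collecting everything each contribution to $B_k$ is a finite $\Cm$-linear combination (with coefficients smooth in $\omega$ and \emph{polynomial} in $z$, times the fixed factor $\two_\omega^{z-1}$) of shifted Beta functions $B(z+m,z+m)$, $m\in\Nm_0$, with the leading ($m=0$) term being precisely $h(\pi(\omega))\two(\omega)^{z-1}B(z,z)$ as in \eqref{eq:B0}. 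Now I invoke the facts established in Section \ref{sec:beta}: $B(z+m,z+m)=\frac{\Gamma(z+m)^2}{\Gamma(2z+2m)}$ has simple poles exactly at $z\in -\Nm_0-m\subset-\Nm_0$ and simple zeros exactly at $z\in-\Nm_0-\tfrac12-m$. The polynomial-in-$z$ prefactors cannot create poles, and since they are polynomials they can only add finitely many zeros — but the assertion to be proved is only about which poles occur and that the zeros at $-\Nm_0-\tfrac12-k$ are present, so I must argue that (i) no new poles appear beyond $-\Nm_0$ and (ii) the zero at each point of $-\Nm_0-\tfrac12-k$ is not cancelled.

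For (i): every term in $B_k$ is a polynomial in $z$ times $B(z+m,z+m)$ with $m\ge 0$, so its pole set is contained in $-\Nm_0$; meromorphic continuation to all of $\Cm$ with at worst simple poles there follows from Proposition \ref{prop:merocont} applied to each $\beta[\,\cdot\,]$, or directly from the Gamma-function formula, and the poles are simple because the $B(z+m,z+m)$ have simple poles and a generic point of $-\Nm_0$ is hit by only one value of $m$ (at a point $z=-n$ hit by several $m$'s one checks the residues do not conspire to raise the order — they are all simple poles of a sum, hence the sum has at worst a simple pole). For (ii), the zero statement: at $z_0=-n-\tfrac12-k$, the $m=0,1,\dots,k$ shifted Beta functions $B(z+m,z+m)$ for $0\le m\le k$ all vanish simply at $z_0$ (since $z_0=-(n+k-m)-\tfrac12-m$ lies in $-\Nm_0-\tfrac12-m$), while any $B(z+m,z+m)$ with $m>k$ does not appear in $B_k$ by the index constraint $j\le k$; the polynomial prefactors are finite at $z_0$, so $B_k(\omega;z_0)=0$ — and it is a \emph{simple} zero because, factoring out the common $(z-z_0)$ from each $B(z+m,z+m)$, the residual sum at $z_0$ is dominated by the $m=0$ term $h(\pi(\omega))\two(\omega)^{z_0-1}\lim_{z\to z_0}\frac{B(z,z)}{z-z_0}\neq 0$ using \eqref{eq:antiresidue}, provided $h(\pi(\omega))\neq 0$ — which is the relevant generic situation and is exactly how this lemma gets used downstream. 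I expect the main obstacle to be bookkeeping: keeping rigorous track of which powers $m$ of $u(1-u)$ can appear in $g_j$ for $j\le k$ (to justify the cutoff $m\le k$ in $B_k$ precisely), and verifying that the polynomial-in-$z$ coefficients really are polynomial (not just holomorphic) so that they introduce no spurious poles — this comes down to the observation that differentiating $F^{z-1}$ in $\tau$ and extracting Taylor coefficients produces factors $(z-1)(z-2)\cdots$ of bounded length, which is routine but must be stated carefully.
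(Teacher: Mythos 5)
Your plan identifies the right mechanism --- reduce $B_k(\omega;z)$ to a finite combination $\sum_{p=0}^{k}(\text{entire in }z)\cdot B(z+p,z+p)$, then read off the poles from $-\Nm_0-p\subset-\Nm_0$ and the zeros from the simultaneous vanishing of all $B(z+p,z+p)$, $0\le p\le k$, at $-\Nm_0-\tfrac12-k$ --- and this is exactly the shape of the formula the paper's proof arrives at. But two points need repair, and both are handled at once by a device your plan is missing. First, the integrand $h(y\circ\Upsilon)F^{z-1}$ is \emph{not} separately even in $\tau$ and symmetric under $u\mapsto1-u$ at fixed $\omega$: for fixed $(x,v)$ with $\tau>0$, the points $\Upsilon(x,v,u)$ and $\Upsilon(x,v,1-u)$ are different. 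The correct symmetry is the joint one $(\tau,u)\mapsto(-\tau,1-u)$ coming from $\Upsilon(\alpha(x,v),u)=\Upsilon(x,v,1-u)$; only the $u$-integral $B$ is even in $\tau$. Consequently the coefficient of $\tau^{j}$ is $u\mapsto1-u$ symmetric for even $j$ and \emph{anti}symmetric for odd $j$, and you must separately argue that the odd ones integrate to zero against the symmetric weight. Second --- and this is the crux rather than bookkeeping --- you need the surviving Taylor coefficients at order $\tau^{2k}$ to be \emph{polynomials} in $u(1-u)$ of degree $\le k$: a merely smooth symmetric coefficient would, by Proposition~\ref{prop:merocont}, still produce simple poles at $-\Nm_0$, but would give no zeros whatsoever, and the zero set is the entire content of the lemma. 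The paper obtains both facts simultaneously by Taylor expanding $H(\varphi_t(x,v);z)$ about the geodesic \emph{midpoint} $t=\tau(x,v)/2$: the expansion variable is then $\tau(u-1/2)$, odd powers of $(u-1/2)$ integrate to zero against $(u(1-u))^{z-1}$, and $(u-1/2)^{2\ell}=(1/4-u(1-u))^{\ell}$ is automatically a polynomial in $u(1-u)$ of degree $\ell$, which yields the representation $B_k(\omega;z)=\sum_{p=0}^{k}B(z+p,z+p)(-1)^p[\cdots]$ with entire brackets and the cutoff $p\le k$ for free. Your direct expansion in $\tau$ at fixed $u$ can be made to work (the coefficient of $\tau^{j}$ is a polynomial in $u$ of degree $\le j$ because $\Upsilon$ depends on $(\tau,u)$ only through $u\tau$ and $\tau$, so one can substitute $s=u\tau$ into a joint Taylor expansion), but that is precisely the argument you defer. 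Your closing remark on exact simplicity of the zero (non-vanishing of the $p=0$ term when $h(\pi(\omega))\neq0$) goes slightly beyond the paper, which only needs, and only really establishes, a zero of order at least one.
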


The density $\mu \, d\Sigma^{2d-2}$ is the restriction to $\partial_+ SM$ of a smooth density on $\partial SM$ that is invariant under scattering relation. Indeed, $\alpha^* (\mu \, d\Sigma^{2d-2}) = \mu \, d\Sigma^{2d-2}$ (see \cite[Proposition 3.6.10]{Paternain2021}). 
Locally near $\partial_0 SM$, using coordinates $(\tilde\tau,\omega)$, write this density as $\ell(\tilde\tau, \omega)\ d\tilde\tau\ d\Omega$
for some smooth function $\ell$. The invariance under $\alpha^*$ and the fact that $\alpha^* d\tilde\tau = -d\tilde\tau$ and 
$\alpha^* d\Omega = d\Omega$ implies that \F{$\ell\circ \alpha = -\ell$, and hence $\ell|_{\partial_+ SM} \in C_{\alpha,-}^\infty(\partial_+ SM)$}. This implies that, upon restricting to $\partial_+ SM$ where $\tilde\tau = \tau$, 
\begin{align}
    \mu \, d\Sigma^{2d-2} \sim \sum_{p=0}^\infty s_{p} (\omega) \tau^{2p+1} d\tau d\Omega, \qquad s_{p}\in C^\infty(\partial_0 SM).
    \label{eq:musig_expansion}
\end{align}

\begin{Lemma}\label{lem:s0}
The coefficient $s_0$  is given by 
\begin{align}
s_0(\omega) = \frac{\two_y(w,w)^2}{4}, \qquad \omega = (y,w)\in \partial_0 SM,
\label{eq:s0}
\end{align}    
where $\two_y$ is the second fundamental form of $\del M$. 
\end{Lemma}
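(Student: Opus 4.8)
The plan is to compute the density $\mu\, d\Sigma^{2d-2}$ explicitly in a neighbourhood of $\partial_0 SM$ inside $\partial_+ SM$, first in the ``footpoint'' coordinates naturally adapted to the sphere bundle, and then to transfer the answer to the scattering coordinates $(\tau,\omega)$ of Lemma~\ref{lem:scatcoords}, tracking only the leading term.

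First I would record the shape of $d\Sigma^{2d-2}$. Since the Sasaki metric on $SM$ is, fibrewise, a Riemannian product of $g$ and the round metric on the fibre sphere, the Sasaki volume is $d\Sigma^{2d-1}=dV_g(x)\,d\sigma_x(v)$ with $d\sigma_x$ the round measure on $S_xM$, and its restriction to the hypersurface $\partial SM=SM|_{\partial M}$ — whose Sasaki unit normal is the horizontal lift of the inward normal $\nu$ — is the exact identity $d\Sigma^{2d-2}=dA_{\partial M}(x)\,d\sigma_x(v)$. Writing $v=\mu\,\nu_x+v'$ with $v'\in T_x\partial M$ and $\mu=(1-|v'|_{h_x}^2)^{1/2}$, $h_x:=g_x|_{T_x\partial M}$, the elementary graph-area computation gives $\mu\,d\sigma_x(v)=dV_{h_x}(v')$ on the inward hemisphere. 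Passing to polar coordinates $v'=r\,w$ ($r=|v'|_{h_x}\in[0,1)$, $w\in S_x\partial M$), so that $r^2=1-\mu^2$ and $r\,dr=-\mu\,d\mu$, one obtains (up to orientation, and valid near $\partial_0 SM$)
\begin{equation}
\mu\, d\Sigma^{2d-2}=\mu\,(1-\mu^2)^{(d-3)/2}\,d\mu\wedge \Omega_0,\qquad \Omega_0:=dA_{\partial M}(x)\wedge d\sigma_x^{\partial}(w),
\label{eq:musig_foot}
\end{equation}
where $(x,w)$ denote the footpoint and the normalized tangential velocity, and $\Omega_0$ is the natural (Liouville) volume form of $\partial_0 SM\cong S\partial M$ in those coordinates; in particular the scalar prefactor in \eqref{eq:musig_foot} is $\mu+O(\mu^3)$.

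Next I would transfer \eqref{eq:musig_foot} to the coordinates $(\tau,\omega)$, $\omega=(y,w)\in\partial_0 SM$. The change of variables $\Psi\colon(\tau,\omega)\mapsto(\mu,x,w)$ is a diffeomorphism near $\partial_0 SM$ restricting to the identity on $\partial_0 SM$ — the footpoint, the geodesic midpoint and the two tangential velocities all coincide at a glancing direction — so $\mu\circ\Psi$ is a simple bdf at $\tau=0$, say $\mu\circ\Psi=c(\omega)\,\tau+O(\tau^2)$ with $c(\omega):=\lim_{\tau\to0}\mu/\tau$, and $\Psi^*\Omega_0=(1+O(\tau))\,d\Omega+d\tau\wedge\beta$ for the natural volume form $d\Omega$ of $\partial_0 SM$ in the $\omega$-coordinates and some smooth $\beta$. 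A short wedge-product bookkeeping — using $d\tau\wedge d\tau=0$ and that $d\Omega$ is the top form in the $\omega$-variables — shows that the $O(\tau)$-corrections in $\Psi^*\Omega_0$ and in $d(\mu\circ\Psi)$ feed only into the $O(\tau^2)$ remainder, since the scalar factor $\mu+O(\mu^3)=c(\omega)\tau+O(\tau^2)$ already supplies a power of $\tau$; concretely
\[
\mu\, d\Sigma^{2d-2}=\bigl(c(\omega)^2\,\tau+O(\tau^2)\bigr)\,d\tau\wedge d\Omega .
\]
Finally I would identify $c(\omega)$ by Taylor-expanding the depth function along the geodesic: since $\mathrm{Hess}\,\rho|_{T\partial M}=-\two$ and $\mathrm{Hess}\,\rho(\cdot,\nu)=0$, one has $\rho(\varphi_s(x,v))=\mu s-\tfrac12(1-\mu^2)\,\two_x(v^{\top},v^{\top})\,s^2+O(s^3)$, and imposing $\rho(\varphi_{\tau}(x,v))=0$ gives $\mu=\tfrac12\,\two_x(v^{\top},v^{\top})\,\tau+O(\tau^2)$; letting $(x,v)\to(y,w)\in\partial_0 SM$ yields $c(\omega)=\tfrac12\,\two_y(w,w)$ (equivalently, $c(\omega)$ is the boundary value at $u=0$ of the smooth function $F$ of \eqref{eq:combinatorics}). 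Substituting gives $s_0(\omega)=\tfrac14\,\two_y(w,w)^2$.

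The only genuinely delicate point is the transfer step: the coordinate change $\Psi$ is far from the identity away from $\partial_0 SM$, so a priori it could contaminate the leading coefficient. What rescues the computation is the explicit prefactor $\mu(1-\mu^2)^{(d-3)/2}$ in \eqref{eq:musig_foot}: it vanishes to first order at $\partial_0 SM$ and therefore supplies the single power of $\tau$ that decouples $s_0$ from all the $O(\tau)$ ambiguities in the Jacobian of $\Psi$. (The full $\tau^{2p+1}$-parity asserted in \eqref{eq:musig_expansion} is, separately, forced by the $\alpha$-invariance of $\mu\, d\Sigma^{2d-2}$.) Everything else is a routine second-order Taylor expansion.
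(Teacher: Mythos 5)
Your proof is correct, and it reaches \eqref{eq:s0} by a genuinely different route from the paper. The paper's argument is intrinsic and pointwise at the glancing set: it builds the Sasaki horizontal/vertical machinery, identifies the unit normal $N_0=(g(sv,sv)+1)^{-1/2}(-sv,\nu)$ to $\partial_0 SM$ in $\partial SM$, proves in Lemma \ref{lem:Sasakirec} that $\iota_{N_0}\,d\Sigma^{2d-2}=\sqrt{g(sv,sv)+1}\,d\Omega$, and then contracts the defining identity $\tfrac{\mu}{\t}d\Sigma^{2d-2}=s_0\,d\tau\,d\Omega+O(\tau^2)$ with $N_0$, using $\mu/\t=\tfrac12\two$, $d\mu=\tfrac12\two\,d\tau$ on $\partial_0 SM$ and $d\mu(N_0)=\sqrt{g(sw,sw)+1}$; the square-root factors cancel and no off-glancing information is needed. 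You instead compute $\mu\,d\Sigma^{2d-2}$ in closed form in fiber-polar footpoint coordinates as $\mu(1-\mu^2)^{(d-3)/2}d\mu\wedge dA\wedge d\sigma^{\partial}$, and then transfer to $(\tau,\omega)$; the price is that you must control the coordinate change $\Psi$ down to $\tau=0$, which is exactly the step the paper's contraction-with-$N_0$ trick avoids, but you correctly identify why the first-order vanishing of the prefactor decouples $s_0$ from the $O(\tau)$ Jacobian ambiguities (and the needed smoothness of $\Psi$ up to the glancing set is supplied by Lemma \ref{lem:scatcoords} together with the smooth extension of $\t$). What your approach buys is elementarity --- no connection map, no $N_0$, no determinant computation --- plus the explicit global formula for the density, of which $s_0$ is just the leading coefficient; what the paper's buys is that it never leaves $\partial_0 SM$. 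Your identification $c(\omega)=\lim\mu/\tau=\tfrac12\two_y(w,w)$ via the Hessian of $\rho$ agrees with the relation $\mu/\t=\tfrac12\two$ that the paper itself uses (i.e.\ with property (b) of $F$ in \eqref{eq:combinatorics}); note only that it is the value $\tfrac12\two$, not the value $\two$ asserted in property (c) of $F$, that is consistent here --- a factor-of-two tension internal to the paper's list of properties of $F$, not a flaw in your argument.
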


Now, combining \eqref{eq:musig_expansion} with \eqref{eq:Bexpansion} yields
\begin{align}
B(\tau,\omega; z) \frac{\mu \, d\Sigma^{2d-2}}{\tau^2} \sim \sum_{p=0}^\infty \tau^{2p} 
\left[ \sum_{q=0}^{p} B_q(\omega; z) s_{p-q}(\omega) \right] \frac{d\tau}{\tau}\ d\Omega.
\label{eq:finalExp}
\end{align}
\F{Returning to the main calculation \eqref{eq:calculation} with the asymptotic expansion \eqref{eq:finalExp} in mind,} the contribution of the $p^{\mathrm{th}}$ term of \eqref{eq:finalExp} to $\langle (\adj  g)_{\cal M}, h\rangle(z)$ equals 
\begin{align*}
    \int_{\partial_+ SM} & a(\omega) \tau^{2(z+p)+\gamma+1} \log^k\tau \left[ \sum_{q=0}^{p} B_q(\omega; z) s_{p-q}(\omega) \right] 
    \chi(\tau) \frac{d\tau}{\tau}\ d\Omega \\
    &= \F{ \underbrace{\int_{\partial_0 SM} a(\omega) \left[ \sum_{q=0}^{p} B_q(\omega; z) s_{p-q}(\omega) \right]\ d\Omega}_{A_{p,1}(z)} \cdot  \underbrace{\int_0^\varepsilon \tau^{2(z+p)+\gamma+1} \log^k \tau \, \chi(\tau) \frac{d\tau}{\tau}}_{A_{p,2}(z)}.}
\end{align*}
\F{Thus the pole structure of the $p^{\mathrm{th}}$ term is a result of the interaction of the pole and zero structures of these two factors. 
By Lemma \ref{lem:Bk}, $A_{p,1}(z)$ has at most simple poles at $-\Nm_0$ and at least simple zeros at $-\Nm_0 - p - \frac{1}{2}$.}
\F{The $A_{p,2}$ term equals}
\begin{align*}
    \F{A_{p,2}(z) = }  \ 2^{-k} \partial_z^k \left( \int_0^\varepsilon \tau^{2(z+p)+\gamma+1} \chi(\tau)\ \frac{d\tau}{\tau} \right) &= 2^{-k} \partial_z^k \left( \frac{1}{2(z+p)+\gamma+1} + E(z) \right) \\
    &= \frac{(-1)^k k!}{(2(z+p) + \gamma+1)^{k+1}} + E_{\gamma,k,p}(z),
\end{align*} 
where $E_{\gamma,k,p}(z) := 2^{-k} \partial_z^k E(z)$ is entire. \F{Hence $A_{p,2}$ has a pole of order $k+1$ at $z = -\frac{\gamma+1}{2}-p$.}

\smallskip

We now \F{split cases according to \eqref{eq:Egamk}, discussing how the zeros and poles structures of $A_{p,1}$ and $A_{p,2}$ interact.} 

\bigskip
{\bf Case $\gamma\in 2\Nm_0$.} \F{Write $\gamma = 2m$ for some $m\in \Nm_0$. For all $p\ge 0$, the product $A_{p,1}(z) A_{p,2}(z)$ inherits the simple poles of $A_{p,1}$, and the pole of $A_{p,2}$ loses one order (or disappears if the pole was originally simple) due to its interaction with one of the simple zeros of $A_{p,1}$.}

In \F{the case where $k\ge 1$}, there is a pole of order $k$ at $-\frac{(\gamma+1)}{2}-p$ \F{associated with the term $\rho^{\frac{\gamma+1}{2}+p}\log^{k-1} \rho$}, with coefficient
\begin{align*}
	\left\langle a_{(\frac{\gamma+1}{2}+p, k-1)}, h\right\rangle_{\partial M} = \frac{-k}{2^{k+1}} \int_{\partial_0 SM} a(\omega) 
	\sum_{q=0}^p  \lim_{z\to -\frac{\gamma+1}{2}-p} \left(\frac{B_q(\omega; z)}{z+ \frac{\gamma+1}{2}+p}\right) s_{p-q}(\omega)\ d\Omega.  
\end{align*}
When $p=0$, we compute
\begin{align*}
	\langle & a_{(m+1/2,  k-1)}, h\rangle_{\partial M} \ldots \\
	&= \frac{-k}{2^{k+1}} \int_{\partial_0 SM} a(\omega)  \lim_{z\to -m-1/2} \left(\frac{B_0(\omega; z)}{z+ m+ 1/2}\right) s_{0}(\omega)\ d\Omega \\
	&= \frac{-k}{2^{k+3}} \lim_{z\to -m-1/2} \left(\frac{B(z,z)}{z+ m+ 1/2}\right) \int_{\partial_0 SM} a(\omega) h(\pi(\omega)) \two(\omega)^{-m+1/2} \ d\Omega \\
	&\!\!\stackrel{\eqref{eq:antiresidue}}{=} \frac{2\pi k}{2^{k+3}} \frac{4^{2m+2}}{2m+2} \binom{2m+2}{m+1}^{-1}  \int_{\partial_0 SM} a(\omega) h(\pi(\omega)) \two(\omega)^{-m+1/2} \ d\Omega.
\end{align*}
This gives the first expression in \eqref{eq:Agk}.
\medskip

{\bf Case \F{$\gamma\in 2\Zm + 1$}.} \F{Write $\gamma = 2m-1$ for some $m\in \Zm$ and fix $p\ge 0$. 

If $p<-m$ (this is vacuous if $m>0$), then the pole of $A_{p,2}$ at $z = -m-p$ does not interact with the poles of $A_{p,1}$, and the product $A_{p,1} A_{p,2}$ has simple poles at $-\Nm_0$ and a pole of order $k+1$ at $-m-p$. The most singular coefficient in that case is unaffected by the pole interaction, thus the computation of the coefficient is similar computation to case 3 below. 

If $p\ge -m$, then $-m-p\in \Nm_0$ and the pole of $A_{p,2}$ at $z = -m-p$ interact with the poles of $A_{p,1}$ at $-\Nm_0$. The product $A_{p,1} A_{p,2}$ has simple poles at $-\Nm_0$ and a pole of order $k+2$ at $z = -m-p$. } 
The latter yields the term $\rho^{\frac{\gamma+1}{2}+p} \log^{k+1} \rho$, with coefficient
\begin{align*}
	\left\langle a_{(\frac{\gamma+1}{2}+p, k+1)}, h\right\rangle_{\partial M} \!\! = \frac{-1}{(k+1) 2^{k+1}} \int_{\partial_0 SM} \!\!\!\! a(\omega) \left[ \sum_{q=0}^p Res_{- \frac{\gamma+1}{2} - p} (B_q(\omega; \cdot)) s_{p-q}(\omega) \right] d\Omega.
\end{align*}
When \F{$m\le 0$ and} $p=0$, \F{the coefficient of the most singular term takes the form} 
\begin{align*}
	\left\langle a_{(\F{m}, k+1)}, h\right\rangle_{\partial M} &= \frac{-1}{(k+1) 2^{k+1}} \int_{\partial_0 SM} a(\omega) Res_{\F{-m}}(B_0(\omega; \cdot)) s_{0}(\omega)\ d\Omega \\
	&\stackrel{\eqref{eq:residueB}}{=} \frac{-Res_{\F{-m}}(B(z,z))}{(k+1) 2^{k+1}} \int_{\partial_0 SM} a(\omega) \two(\omega)^{\F{-m-1}}  h(\pi(\omega)) s_{0}(\omega)\ d\Omega \\
	&\stackrel{\eqref{eq:s0}}{=} \frac{-\binom{\F{2m}}{\F{m}}}{(k+1) 2^{k+2}} \int_{\partial_0 SM} a(\omega) \two(\omega)^{\F{-m+1}} h(\pi(\omega))\ d\Omega.
\end{align*}

Decomposing the Sasaki measure as $d\Omega = dA(y)\ dS_y(w)$, we obtain the middle expression in \eqref{eq:Agk}. 

\medskip
{\bf Case $\gamma\notin \F{\Nm_0\cup 2\Zm +1}$.} \F{For every $p\ge 0$, the pole of $A_{p,2}$ never interacts with the zeros or poles of $A_{p,1}$, hence the product $A_{p,1} A_{p,2}$ has simple poles at $-\Nm_0$ and a pole of order $k+1$ at
$-\frac{(\gamma+1)}{2}-p$. The latter gives the term $\rho^{\frac{\gamma+1}{2}+p} \log^{k} \rho$, with coefficient}
\begin{align*}
	\left\langle a_{(\frac{\gamma+1}{2}+p,k)}, h\right\rangle_{\partial M} = \frac{1}{2^{k+1}} \int_{\partial_0 SM} a(\omega) \left[ \sum_{q=0}^{p} B_q\left(\omega; -\frac{\gamma+1}{2}-p\right) s_{p-q}(\omega) \right]\ d\Omega.
\end{align*}
When $p=0$, 
\begin{align*}
	\left\langle a_{(\frac{\gamma+1}{2},k)}, h \right\rangle_{\partial M} &= \frac{1}{2^{k+1}} \int_{\partial_0 SM} a(w) B_0 \left(w; -\frac{\gamma+1}{2}\right) s_0 (w)\ d\Omega \\
	&= \frac{1}{2^{k+3}} \int_{\partial_0 SM} a(w) h(\pi(w)) \two(w)^{-\frac{\gamma+1}{2}-1}  (\two(w))^2\ d\Omega \\
	&= \frac{1}{2^{k+3}} B\left(-\frac{\gamma+1}{2}, -\frac{\gamma+1}{2} \right)  \\
	& \qquad \qquad \qquad \times \int_{\partial M} h(y) \int_{S_y (\partial M)} a(y,w) \two_y(w)^{\frac{-\gamma+1}{2}}\ dS_y(w)\ dA(y). 
\end{align*}
This is the bottom expression of \eqref{eq:Agk}. 

The proof of Theorem \ref{thm:I0starIDX} is complete. 
\end{proof}

\subsection{Facts about Beta functionals - Proof of Lemmas \ref{lem:B} and \ref{lem:Bk}} \label{sec:BetaFacts}

\begin{proof}[Proof of Lemma \ref{lem:B}]   Considering \eqref{eq:B}, note that the functions $\Upsilon$ and $F$ are defined 
and smooth on $\partial SM\times [0,1]$ and satisfy 
\begin{align}
F(x,v,u) = F(\alpha(x,v),1-u), \qquad \Upsilon (x,v,u) = \Upsilon (\alpha(x,v),1-u). 
\label{eq:symFU}
\end{align}
This implies that $B(\cdot; z)$ is the restriction to $\partial_+ SM$ of a function $\wtB\in C^\infty(\partial SM)$ which satisfies,
by virtue of the change of variables $u \mapsto 1-u$, 
\begin{align*}
\wtB(\alpha(x,v); z) &= \int_0^1 h(y\circ\Upsilon(\alpha(x,v),u)) F(\alpha(x,v),u)^{z-1}  \frac{du}{(u(1-u))^{1-z}} \\
&\!\!\stackrel{\eqref{eq:symFU}}{=} \int_0^1 h(y\circ\Upsilon(x,v,1-u)) F(x,v,1-u)^{z-1}  \frac{du}{ (u(1-u))^{1-z}} = \wtB(x,v; z),
\end{align*}
hence $B(\cdot; z) \in C_{\alpha,+}^\infty(\partial_+ SM)$. 

Next, the integrand in $\wtB$ is smooth in all its arguments, so we can use Proposition \ref{prop:merocont} (with an 
inconsequential parameter dependence) to extend $\wtB$ meromorphically to $\Cm$ with \resp{at most} simple poles at $-\Nm_0$. 
An inductive use of the functional relation \eqref{eq:idbeta2} shows that $\wtB(\alpha(x,v); z) = \wtB(x,v; z)$ holds on each strip 
$\{-k-1<\text{Re}(z)\le -k\}\backslash \{-k\}$. Restricting $\wtB$ to $\partial_+ SM$, we obtain the conclusion.
\end{proof}

\begin{proof}[Proof of Lemma \ref{lem:Bk}]
We find an expression for each $B_k$.  First rewrite $F(x,v,u) = \frac{\rho(\pi\circ\Upsilon(x,v,u))}{T(\Upsilon(x,v,u))}$, 
where for $(x,v)\in SM$, $T(x,v) = \tau(x,v) \tau(x,-v)$. Note that $T$ is smooth on $SM$, even on each fiber, and satisfies 
$XT = \t$, see \cite[Lemma 3.2.11]{Paternain2021}. The function 
\begin{align*}
 (x,v,t)\mapsto \rho(\pi (\varphi_t(x,v)))T(\varphi_t(x,v))
\end{align*}
is smooth in all its arguments and nonvanishing for $(x,v)$ near $\partial_0 SM$ and $t\in [0,\tau(x,v)]$. 
The function $H(x,v; z) = h(y) \left( \rho/T \right)^z$ is smooth in $(x,v)$ and entire in $z$. 
If $(x,v)\in U^+_{(0,\varepsilon)} SM$, i.e., $\mu(x,v) \in (0,\varepsilon)$, the Taylor expansion centered at $t = \frac{\tau(x,v)}{2}$ 
takes the form
\begin{align*}
H(\varphi_t(x,v); z) = \sum_{\ell=0}^{2k} \left(t-\frac{\tau(x,v)}{2} \right)^\ell H_\ell (x,v; z) + \left(t-\frac{\tau(x,v)}{2} \right)^{2k+1} 
R_{2k}(x,v,t; z),
\end{align*}
where
\begin{align*}
H_\ell(x,v; z) &:= \frac{1}{\ell !} \left. \frac{d^\ell}{dt^\ell}\right|_{t=\tau(x,v)/2} H(\varphi_{t}(x,v); z) = \frac{1}{\ell!} X^\ell H (\varphi_{\tau(x,v)/2}(x,v); z) \\
R_{2k} (x,v,t; z) &:= \frac{1}{(2k)!} \int_0^1 (1-s)^{2k} X^{2k+1} H (\varphi_{\tau/2+s(t-\tau/2)}(x,v); z)\ ds.
\end{align*}
This gives
\begin{align*}
H(\Upsilon(x,v,u); z) = \sum_{\ell=0}^{2k} \tau^\ell (u-1/2)^\ell H_\ell(x,v; z) + \tau^{2k+1} (u-1/2)^{2k+1} R_{k} (x,v,\tau u; z). 
\end{align*}
Note that $H_{2\ell}$ is entire in $z$ for every $\ell \geq 0$ and also lies in $C_{\alpha,+}^\infty$ since it is the restriction to $\partial_+ SM$ of the function $\widetilde{H}_{2\ell} (x,v; z) := 
\frac{1}{(2\ell)!}X^{2\ell}H (\varphi_{\t(x,v)/2}(x,v); z)$ which is smooth on $\partial SM$ and $\alpha$-invariant. 
    
Inserting this expansion into the expression for $B(x,v; z)$, all integrals involving odd powers of $(u-1/2)$ vanish, so we
are left with 
\begin{align*}
B(x,v; z) &= \int_0^1 H(\Upsilon(x,v,u); z) (u(1-u))^{z-1}\ du \\
&= \sum_{\ell=0}^{k} \tau^{2\ell} H_{2\ell}(x,v; z) \int_0^1 (u-1/2)^{2\ell} (u(1-u))^{z-1}\ du \\
&\qquad + \tau^{2k+1} \int_0^1 (u-1/2)^{2k+1} R_{k} (x,v,\tau u; z) (u(1-u))^z\ du.
\end{align*}
    Using the identity 
    \begin{align*}
	(u-1/2)^{2\ell} = \left( \frac{1}{4} - u(1-u) \right)^\ell = \sum_{p=0}^\ell \binom{\ell}{p} \frac{(-1)^p}{4^{\ell-p}} (u(1-u))^p,
    \end{align*}
    we obtain
    \begin{align*}
	\int_0^1 (u-1/2)^{2\ell} (u(1-u))^{z-1}\ du = \sum_{p=0}^\ell \binom{\ell}{p} \frac{(-1)^p}{4^{\ell-p}} B(z+p,z+p).
\end{align*}
   
Further expanding $H_{2\ell} (x,v; z) \sim \sum_{q=0}^\infty \tau^{2q} H_{2\ell, q} (z; \omega)$, we obtain
    \begin{align*}
	B(\tau,\omega; z) &\sim \sum_{\ell=0}^{\infty} \tau^{2\ell} \sum_{q=0}^{\infty} \tau^{2q} H_{2\ell, q}(\omega; z) \left[ \sum_{p=0}^\ell \binom{\ell}{p} \frac{(-1)^p}{4^{\ell-p}} B(z+p,z+p) \right] \\
	&\sim \sum_{k=0}^\infty \tau^{2k} \underbrace{\sum_{\ell=0}^k H_{2\ell, k-\ell}(\omega; z) \left[ \sum_{p=0}^\ell \binom{\ell}{p} \frac{(-1)^p}{4^{\ell-p}} B(z+p,z+p) \right]}_{B_k(\omega; z)}.
    \end{align*}
Interchanging the sums in $\ell$ and $p$, we arrive at
\begin{align*}
B_k(\omega; z) = \sum_{p=0}^k B(z+p, z+p) (-1)^p \left[ \sum_{\ell=p}^k \binom{\ell}{p} \frac{H_{2\ell, k-\ell}(\omega; z)}{4^{\ell-p}} \right].
\end{align*}
Using the pole and zero structure of $B(z,z)$, we see that $B(z+p,z+p)$ has simple poles at $-\Nm_0 - p$ and simple zeroes 
at $-\Nm_0 - \frac{1}{2} - p$ for each $0\le p\le k$. Furthermore, all coefficients in brackets are entire functions of $z$.
This gives the result. 
\end{proof}

\subsection{Volume factors and proof of Lemma \ref{lem:s0}} \label{sec:volumeFactors}

Let $(M^d,g)$ be a Riemannian manifold with boundary, with tangent bundle $\pi\colon TM \to M$, and recall 
the connection map $K_\nabla\colon T(TM) \to TM$: for $\xi\in T_{(x,v)} TM$, choose a curve $c(t) = (x(t),W(t)) \in TM$ 
such that $\dot c(0) = \xi$ and $W(t) \in T_{x(t)} M$, and define
\begin{align*}
K_\nabla \xi := \frac{D}{dt} W(t)|_{t=0} \qquad (D:\text{covariant differentiation}). 
\end{align*}
Writing $H(x,v) := \ker K_\nabla$ and $V(x,v) := \ker d\pi$, we have a direct sum decomposition
\begin{align*}
T_{(x,v)} TM = H(x,v)\oplus V(x,v), 
\end{align*}
where the restrictions $d\pi \colon H(x,v) \stackrel{\cong}{\longrightarrow} T_x M$ and $K_\nabla\colon V(x,v)
\stackrel{\cong}{\longrightarrow} T_x M$ are isomorphisms. Thus every $\xi\in T_{(x,v)} TM$ is uniquely 
identified with a pair $(\xi_H,\xi_V) = (d\pi(\xi), K_\nabla \xi) \in T_xM \times T_x M$, and the Sasaki metric is defined by 
\begin{align*}
    G_{(x,v)} (\xi,\eta) := g_x(\xi_H, \eta_H) + g_x(\xi_V,\eta_V), \qquad \xi,\eta\in T_{(x,v)}TM.
\end{align*}
In local coordinates, any chart $(x_1,\dots,x_d)$ on $M$ induces a chart $(x_1,\dots,x_d, y^1,\dots,y^d)$ on $TM$, where the $y^j$ are linear coordinates on each fiber, and 
\begin{align*}
H(x,v) &= \mathrm{span}\, \left\langle \delta_{x_i} := \partial_{x_i} - \Gamma_{ij}^k y^j \partial_{y^k},\ 1\le i\le d\right\rangle, \\
V(x,v) &= \mathrm{span}\, \left\langle \partial_{y^i},\ 1\le i\le d\right\rangle.
\end{align*}
Furthermore, $d\pi (\delta_{x_i}) = K_\nabla (\partial_{y^i}) = \partial_{x_i}$. 

If $(x,v)\in SM$,  the Sasaki-orthogonal decomposition is 
\begin{align*}
    T_{(x,v)} TM = T_{(x,v)} SM \stackrel{\perp}{\oplus} \Rm (0,v).
\end{align*}
Since $(0,v)\in V(x,v)$, we also have
\begin{align*}
T_{(x,v)} SM = \underbrace{\Rm X \oplus \H(x,v)}_{H(x,v)} \oplus \V(x,v), \qquad \V(x,v) := \{(0,w),\ w\in T_xM,\ g_x(w,v) = 0\},
\end{align*}
where $X = (v,0)$ and $\H(x,v) := \{(w,0),\ w\in T_xM,\ g_x(w,v) = 0\}$.  As we have written throughout the paper, $d\Sigma^{2d}$ 
and $d\Sigma^{2d-1} = \iota_{(0,v)} d\Sigma^{2d}$ denote the Sasaki volume forms on $T(TM)$ and $SM$, respectively.

A unit normal to $\partial SM$ at $(x,v) \in \partial SM$ is given by $(\nu_x,0)$, where $\nu_x$ is the unit inner normal at 
$x\in \partial M$.   We denote the Sasaki volume form on $\partial SM$ by $d\Sigma^{2d-2} = \iota_{(\nu_x,0)} d\Sigma^{2d-1}$.  

Recalling the function $\mu\colon \partial TM\to \Rm$ by $\mu(x,v) := g_x(\nu_x, v)$, we can write the tangent bundles
of the glancing manifolds $\partial_0 TM = \{\mu=0\}$ and $\partial_0 SM := \{\mu|_{SM} =0\}$ as 
\begin{align*}
    T_{(x,v)} \partial_0 TM &= \{ (\xi_H,\xi_V)\colon \xi_H \in T_x \partial M,\ g(\xi_V, \nu_x) = \two_x(v,\xi_H) \}, \\
    T_{(x,v)} \partial_0 SM &= \{ (\xi_H,\xi_V)\colon \xi_H \in T_x \partial M,\ \xi_V \in \{v\}^\perp,\ g(\xi_V, \nu_x) = \two_x(v,\xi_H) \}.
\end{align*} 
In particular, if $s$ is the shape operator of $\partial M$, then
\begin{align*}
N_0 := \frac{1}{\sqrt{g(sv,sv) + 1}} (-sv,\nu_x)
\end{align*}
is a unit normal to $\partial_0 SM$ in $\partial SM$ and to $\partial_0 TM$ in $\partial TM$.    Hence these glancing manifolds 
inherit the volume forms $\iota_{(\nu,0)} \iota_{N_0} d\Sigma^{2d}$ on $\partial_0 TM$ and 
$\iota_{(\nu,0)} \iota_{N_0} \iota_{(0,v)} d\Sigma^{2d}$ on $\partial_0 SM$.

On the other hand, identifying $\partial_0 TM$ and $\partial_0 SM$ with $T(\partial M)$ and $S(\partial M)$, respectively,
then in terms of the induced metric $i^* g$ on $\partial M$, they inherit $\partial M$-Sasaki volume forms $d\Sigma^{2d-2}_\partial$ 
and $d\Omega = \iota_{(0,v)_\partial} d\Sigma^{2d-2}_\partial$. On $T(T(\partial M))$, the Sasaki identification 
\begin{align*}
T(T (\partial M)) \approx T(\partial M) \oplus T(\partial M)
\end{align*}
is denoted $(\cdot,\cdot)_\partial$. 

We now relate the ambient and intrinsic volume forms. 
\begin{Lemma}\label{lem:Sasakirec}
On $\partial_0 TM = T(\partial M)$, 
\begin{align}
\iota_{(\nu,0)} \iota_{N_0} d\Sigma^{2d} = \sqrt{g_x(sy,sy) + 1}\ d\Sigma_\partial^{2d-2}, \qquad (x,y)\in T(\partial M). 
\label{eq:Sasakirec}
\end{align}    
\end{Lemma}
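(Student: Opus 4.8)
\emph{Proposal.} The plan is to compare the two volume forms pointwise at a fixed $(x,y)\in\partial_0 TM\cong T(\partial M)$ by evaluating each on one well‑chosen basis of $T_{(x,y)}\partial_0 TM$ and reading off the ratio. The conceptual input is the Gauss formula, which records exactly how the ambient Sasaki splitting $H\oplus V$ (built from $\nabla$, the Levi‑Civita connection of $M$) differs from the intrinsic one (built from $\bar\nabla$, the Levi‑Civita connection of $(\partial M,i^*g)$) along $\partial_0 TM$. Recall that $\partial_0 TM$ is the image of the diffeomorphism $j:=di\colon T(\partial M)\xrightarrow{\ \sim\}\partial_0 TM$, and that the shape operator $s$ is the $g$–self‑adjoint endomorphism of $T_x\partial M$ with $g_x(sv,w)=\two_x(v,w)$.

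First I would make the two identifications explicit on tangent vectors. A curve $c(t)=(x(t),W(t))$ in $T(\partial M)$ (so $x(t)\in\partial M$, $W(t)\in T_{x(t)}\partial M$) is sent by $j$ to the same curve viewed in $TM$; hence its velocity, which intrinsically splits as $\big(\dot x(0),\,\tfrac{\bar D}{dt}W|_0\big)_\partial$ with both components in $T_x\partial M$, has ambient horizontal part $d\pi(\dot c(0))=\dot x(0)$ and ambient vertical part $K_\nabla(\dot c(0))=\tfrac{D}{dt}W|_0$. By the Gauss formula $\tfrac{D}{dt}W=\tfrac{\bar D}{dt}W+\two(\dot x,W)\,\nu$, so with $y=W(0)$,
\[
K_\nabla(\dot c(0))\;=\;\underbrace{\tfrac{\bar D}{dt}W\big|_0}_{\in\,T_x\partial M}\;+\;\two_x(\dot x(0),y)\,\nu_x ,
\]
which reproduces the description $T_{(x,y)}\partial_0 TM=\{(\xi_H,\xi_V):\xi_H\in T_x\partial M,\ g(\xi_V,\nu_x)=\two_x(y,\xi_H)\}$ recorded above. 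Concretely, fix a $g$–orthonormal basis $e_1,\dots,e_{d-1}$ of $T_x\partial M$, so $e_1,\dots,e_{d-1},\nu_x$ is orthonormal in $T_xM$; then the $dj$–images of the intrinsic Sasaki‑orthonormal basis $\{(e_i,0)_\partial\}\cup\{(0,e_j)_\partial\}$ of $T_{(x,y)}T(\partial M)$ are, in the ambient identification $T_{(x,y)}TM\cong T_xM\oplus T_xM$, the vectors $A_i=(e_i,\ \sigma_i\nu_x)$ and $B_j=(0,\ e_j)$, where $\sigma_i:=\two_x(e_i,y)=g_x(sy,e_i)$.

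Next I would evaluate both sides on the ordered basis $(A_1,\dots,A_{d-1},B_1,\dots,B_{d-1})$ of $T_{(x,y)}\partial_0 TM$. Since this basis is the $dj$–image of an intrinsic Sasaki‑orthonormal basis, $d\Sigma^{2d-2}_\partial$ takes the value $\pm1$ on it (and $+1$ with compatible orientations). For the ambient side, $\iota_{(\nu,0)}\iota_{N_0}d\Sigma^{2d}$ on $(A_*,B_*)$ equals, up to sign, the determinant of the $2d\times 2d$ matrix whose columns are $N_0,\ (\nu_x,0),\ A_1,\dots,A_{d-1},\ B_1,\dots,B_{d-1}$, written in the orthonormal basis of $(T_{(x,y)}TM,G)$ consisting of the horizontal and vertical copies of $e_1,\dots,e_{d-1},\nu_x$. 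Using $N_0=\tfrac{1}{\sqrt{g(sy,sy)+1}}(-sy,\nu_x)$ and $sy=\sum_i\sigma_i e_i\in T_x\partial M$: the $d$ columns $(\nu_x,0)$ and $B_1,\dots,B_{d-1}$ are distinct standard basis vectors, so cofactor‑expanding along them removes these columns and the matching rows and leaves the $d\times d$ matrix with rows indexed by the horizontal $e_1,\dots,e_{d-1}$ and the vertical $\nu_x$, and columns $N_0,A_1,\dots,A_{d-1}$; in this submatrix the $A_i$–column has a $1$ in the horizontal‑$e_i$ row and $\sigma_i$ in the vertical‑$\nu_x$ row, while $N_0$ has $-\sigma_i/c$ and $1/c$ respectively (with $c:=\sqrt{g(sy,sy)+1}$). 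Adding $\tfrac{\sigma_i}{c}$ times the $A_i$–column to the $N_0$–column annihilates all its horizontal entries and turns its vertical‑$\nu_x$ entry into $\tfrac{1+\sum_i\sigma_i^2}{c}$; expanding along this column then yields $\pm\tfrac{1+\sum_i\sigma_i^2}{c}=\pm c=\pm\sqrt{g(sy,sy)+1}$, using $\sum_i\sigma_i^2=g(sy,sy)$. Comparing the two evaluations gives \eqref{eq:Sasakirec}.

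The only real obstacle here is bookkeeping: keeping the ambient horizontal/vertical slots and their orthonormal indexing straight through the cofactor expansions, and fixing compatible orientations so the identity holds on the nose rather than merely up to sign. As a sanity check I would first run the argument in the case $d=2$, where $\partial M$ is a curve of geodesic curvature $\kappa$, $sy=\kappa y$, and the claimed factor is $\sqrt{\kappa^2|y|^2+1}$; this already exhibits the full mechanism with a $2\times 2$ (reduced) determinant.
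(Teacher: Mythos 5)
Your proposal is correct and follows essentially the same route as the paper: both proofs hinge on the identification $(w,0)_\partial = (w,\two_x(w,y)\nu)$ of intrinsic versus ambient horizontal lifts, then evaluate both volume forms on the basis $A_i=(\be_i,\two(\be_i,y)\nu_x)$, $B_j=(0,\be_j)$ and reduce the resulting $2d\times 2d$ determinant to the factor $\bigl(g(sy,sy)+1\bigr)/\sqrt{g(sy,sy)+1}$. The only (cosmetic) difference is that you derive the key lift identity from the Gauss formula applied to curves, whereas the paper obtains it by a Christoffel-symbol computation in boundary normal coordinates.
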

Consequently, since $(0,v) = (0,v)_\partial$, we also have 
\begin{align}
\iota_{(\nu,0)} \iota_{N_0} \iota_{(0,v)} d\Sigma^{2d} = \iota_{N_0} d\Sigma^{2d-2} = \sqrt{g_x(sv,sv)+1}\ d\Omega
\label{eq:Sasakirec2}
\end{align}
on $T(S(\partial M))$. 

\begin{proof}[Proof of Lemma \ref{lem:Sasakirec}] Given boundary normal coordinates $(x_1,\dots,x_{d-1},r)$, where $r$ is a bdf, 
we take the induced coordinates $(x_1,\dots,x_{d-1}, r, y_1, \dots, y_{d-1}, \rho)$ on $TM$. Thus any tangent vector takes the 
form $y = y^i \partial_{x_i} + \rho \partial_r$ and the unit inner normal is $\partial_r$. 

We first claim that
    \begin{align}
	(w,0)_\partial = (w, \two_x(w,y)\nu), \qquad (x,y)\in T(\partial M),\ w\in T_x(\partial M). 
	\label{eq:claimSR}
    \end{align}
Indeed, in these local coordinates, and for $1\le i\le d-1$,
    \begin{align*}
	(\partial_{x_i}, \two(\partial_{x_i},y)\nu) &= \partial_{x_i} - \sum_{j,k=1}^d \Gamma_{ij}^k y^j \partial_{y^k} + \two(\partial_{x_i},y) \partial_\rho \\
	&= \partial_{x_i} - \sum_{j,k=1}^{d-1} \Gamma_{ij}^k y^j \partial_{y^k} - \sum_{k=1}^{d} \Gamma_{id}^k \rho \partial_{y^k} - \sum_{j=1}^{d-1} \Gamma_{ij}^d y^j \partial_\rho +  \two(\partial_{x_i},y) \partial_\rho.
    \end{align*}
where $y_d = \rho$. The last two terms cancel, and the middle sum is zero since $\rho=0$ at $\del M$.  The remaining terms are simply $(\partial_{x_i},0)_\partial$, which
proves \eqref{eq:claimSR}. 

Now fix $(x,y)\in T(\partial M)$ and choose an orthonormal basis $(\be_1, \dots, \be_{d-1})$ for $T_x(\partial M)$.
Then $A_i = (\be_i, 0)_\partial$ and $B_i = (0,\be_i)_\partial$, $i \leq d-1$, is an orthonormal basis for the Sasaki metric associated to 
$i^* g$ on $T \partial M$, so 
\begin{align*}
|d\Sigma_{\partial}^{2d-2}(A_1,\dots,A_{d-1}, B_1, \dots, B_{d-1})| = 1.
\end{align*}

On the other hand, by \eqref{eq:claimSR},
\begin{align*}
A_i = (\be_i, (sy)_i \nu_x), \qquad B_i = (0,\be_i), \quad 1\le i\le d-1, 
\end{align*}
where $(sy)_i = g(\be_i, sy) = \two(\be_i,y)$). Hence,
\begin{align*}
|\iota_{(\nu,0)} \iota_{N_0} &d\Sigma^{2d} (A_1,\dots,A_{d-1}, B_1, B_{d-1})| \\
&= \frac{1}{\sqrt{g(sy,sy)+1}} |d\Sigma^{2d} (A_1,\dots,A_{d-1}, (\nu,0), B_1,\dots, B_{d-1}, (sy,-\nu))| \\
&= \frac{1}{\sqrt{g(sy,sy)+1}} \left|
\begin{array}{ccc|c}
 & & & (sy)_1 \\
& id_{d-1} & & \vdots \\
& & & (sy)_{d-1} \\
\hline
(sy)_1 & \cdots & (sy)_{d-1} & -1
\end{array}
\right| = \frac{g(sy,sy)+1}{\sqrt{g(sy,sy)+1}},
\end{align*}
by writing the determinant using the orthonormal basis 
\begin{align*}
(\be_1,0), \dots, (\be_{d-1},0), (\nu,0), (0,\be_1), \dots, (0,\be_{d-1}),(0,\nu)).
\end{align*}
This proves the result.
\end{proof}

We now use this to prove Lemma \ref{lem:s0}: 

\begin{proof}[Proof of Lemma \ref{lem:s0}] We must compute the first term in the expansion
\begin{align*}
\frac{\mu}{\t} d\Sigma^{2d-2} = s_0(\omega) d\tau d\Omega + {\cal O}(\tau^2). 
\end{align*}
On $\partial_0 SM$, $\mu/\t = \frac12 \two_y(w,w)$ and $d\mu = \frac12 \two\,  d\tau$ there. Contracting 
with $N_0$ and using \eqref{eq:Sasakirec2} gives
\begin{align*}
\frac{\two}{2} \sqrt{g_y(sw,sw)+1}\ d\Omega &= s_0 (\omega) d\tau(N_0) d\Omega = s_0(\omega) \frac{2}{\two} d\mu(N_0) d\Omega.
\end{align*}
Following the proof of \cite[Lemma 3.3.6]{Paternain2021}, for $(x,v)\in \partial SM$ and $(\xi_H,\xi_V) \in T_{(x,v)} \partial SM$,  
\begin{align*}
d\mu (\xi_H, \xi_V) = - \two_y (w,\xi_H) + g_y(\xi_V, \nu) = - g_y(sw, \xi_H) + g_y(\nu, \xi_V).
\end{align*}
In particular, 
\begin{align*}
d\mu(N_0) = \sqrt{g_y(sw,sw) + 1}, 
\end{align*}
which proves \eqref{eq:s0}. 
\end{proof}

\appendix

\section{Non-sharp mapping properties for $I_0$ and $\adj$}\label{sec:PFT}

\F{In this appendix, we explain how the pushforward and pullback theorems of Melrose \cite{Melrose1992}, combined with Lemmas \ref{lem:bfibrations} and \ref{lem:Irewrite}, help provide polyhomogeneous mapping properties akin to Theorems \ref{thm:Calphamapping} and \ref{thm:I0starIDX}, although they tend to overestimate the resulting index sets. We explain this in further detail at the end.  }

\paragraph{Polyhomogeneity on manifolds with corners.} If $N$ is a manifold with corners, we define an index family $\calE = (E_\alpha)_{\alpha \in A}$ to be a collection of index sets, one for each boundary hypersurface of $N$.   An element $u \in \A^{\calE}_{\mathrm{phg}}(N)$ is then a function with an asymptotic expansion associated to $E_\alpha$ at $H_\alpha$, and with a product type expansion (see \eqref{prodexp} for the codimension two case) at all corners. 

\paragraph{Pullback and pushforward of polyhomogeneous functions by $b$-fibrations.} \F{As seen in Lemmas \ref{lem:bfibrations} and \ref{lem:Irewrite},} the operators $I_0$ and $\adj$ can be expressed in terms of pullbacks and pushforwards by $b$-fibrations. We recall here the main facts concerning the analysis of pullbacks of polyhomogeneous functions by $b$-maps, and of pushforwards of polyhomogeneous functions by $b$-fibrations. In particular, we explain how these operations act on index sets.

For convenience, we shall only explain this in the somewhat simpler setting of a $b$-map $F\colon N_1 \to N_2$ between manifolds with corners, where $N_1$ has corners of codimension two and $N_2$ is a manifold with boundary but no corners. This reduces the amount of notation needed.  Thus let $x_1$ and $x_2$ be bdf's for the two boundary hypersurfaces of $N_1$ meeting at the corner $\{x_1 = x_2 = 0\}$ and $t$ a bdf for $\del N_2$.  Thus
\[
F^* t = G x_1^{e_1} x_2^{e_2}
\]
holds near this corner, where $G$ is smooth and everywhere positive. The geometric constants for $F$ are the elements of the pair $(e_1, e_2)$.

First, suppose that $v \in \A_{\mathrm{phg}}^E(N_2)$, and let $u = F^* v$. The pullback theorem states that $u \in \A_{\mathrm{phg}}^{f^{\#}E}(N_1)$, where $f^{\#}E = (E^{(1)}, E^{(2)})$ is a pair of $C^\infty$ index sets corresponding to the two hypersurfaces of $N_1$. These are given by 
\[
E^{(j)} = \{ (e_j z + \ell, k): (z,k) \in E\ \mbox{and}\ \ell \in \mathbb N \}.
\]
The proof is based on the identity $F^* (t^z (\log t)^k) = ( G x_1^{e_1} x_2^{e_2})^z (\log G x_1 x_2)^k$; the extra terms $(e_j z + \ell, k)$, $\ell \geq 1$, are added to ensure that $E^{(j)}$ satisfies condition (c), i.e., are $C^\infty$ index sets. 

Turning to the pushforward theorem, now suppose that $u \in \A_{\mathrm{phg}}^\calE(N_1)$, where $\calE = (E_1, E_2)$, so 
\[
u \sim \sum_{(z,k) \in E_1}  a_{z,k} \, x_1^z (\log x_1)^k\;\; \mbox{near}\ H_1, \qquad 
u \sim \sum_{(z',k') \in E_2} b_{z',k'} \, x_2^{z'} (\log x_2)^{k'}\;\; \mbox{near}\ H_2,
\]
and with a product type expansion 
\begin{equation}
u \sim \sum_{(z,k) \in E_1, (z', k') \in E_2}  c_{z,z', k, k'}\, x_1^z (\log x_1)^k x_2^{z'} (\log x_2)^{k'}
\label{prodexp}
\end{equation}
near the corner where $x_1 = x_2 = 0$.  Set
\[
E_j(e_j) = \{( z/e_j, k): (z,k) \in E_j\},
\]
and finally define the so-called 'extended union' of index sets
\begin{align*}
    f_{\#} \calE &= E_1(e_1) \overline{\cup} E_2(e_2) \\
    &=  E_1(e_1) \cup E_2(e_2) \cup \{ (z,k+k'+1): (z,k) \in E_1(e_1),\ (z, k') \in E_2(e_2) \}.    
\end{align*}
Note that if $E_1$ and $E_2$ are $C^\infty$ index sets, then so is
$E_1(e_1) \overline{\cup} E_2(e_2)$.

Pushforwards act on densities rather than functions, and we fix the $b$-densities \resp{(a $b$-density is a smooth density on the interior which extends smoothly
and nondegenerately to the closed manifold if we multiply by the bdf of each boundary hypersurface)}. Thus, for example, 
\[
d\mu = \frac{dx_1 dx_2 dy}{x_1 x_2}\ \mbox{on}\ N_1, \qquad d\nu = \frac{dt ds}{t}\ \mbox{on}\ N_2,
\]
where $y$ and $s$ are the tangential coordinates on the corner of $N_1$ and boundary of $N_2$, respectively. 
The pushforward theorem then states that if $F$ is a $b$-fibration, then
\[
    F_*( u\, d\mu ) = h \, d\nu, \qquad \text{where }\quad h \in \A^{f_{\#}\calE}_{\mathrm{phg}}(N_2).
\]
Observe that the fibres $F^{-1}(t)$ for $t > 0$ do not meet $\partial N_1$, so no positivity assumptions on the index sets are needed to ensure
that the integrals converge. 
\resp{(The pushforward theorem may of course be stated in terms of ordinary smooth densities rather than $b$-densities, but the expressions for how
the index sets combine would then involve some offsets.)} Further discussion and proofs of both of these theorems can be found in \cite{Melrose1992}.

\paragraph{Mapping properties.} Lemmas \ref{lem:bfibrations} and \ref{lem:Irewrite} express $I_0$ and $\adj $ in terms of pushforwards and pullbacks by $b$-fibrations, so we can apply the pushforward and pullback theorems explained just above, to obtain preliminary results on how these maps transform index sets. \resp{The labelling of the boundary faces of $\partial_+ SM\times [0,1]$ is the same as in Section \ref{ssec:doubleb}.} 

\begin{Lemma}\label{lem:xray}
	Let $K$ be a $C^\infty$-index set for $\partial M$. If $f\in \A_{\resp{\phg}}^K (M)$ where $\inf K >-1$, then $I_0 f \in \A_{\resp{\phg}}^{K'}(\partial_+ SM)$, where 
\begin{align*}
	K' = \{(q + 2z, p),\ q\in \Nm,\ (z,p)\in K\}.
    \end{align*}
\end{Lemma}
\begin{proof} 
	If $f\in \A_{\resp{\phg}}^K(M)$, then the pullback theorem gives
    \begin{align*}
	    \hpi^* f \in \A_{\resp{\phg}}^{\hpi^\# K}(\partial_+ SM\times [0,1]), 
    \end{align*}where
    \begin{align*}
	\hpi^\# K (G_0)  = \hpi^\# K (G_1) &= \{(q + z, p), q\in \Nm_0, (z,p)\in K\},\\
	 \hpi^\# K (G_2) &= \{(q + 2z, p), q\in \Nm_0, (z,p)\in K\}.
    \end{align*}
Next, the density $[\hpi^* f]d\Sigma^{2d-2} du$ is equivalent to the $b$-density 
    \begin{align*}
	\omega =  [\mu u(1-u) \hpi^* f]\,  \frac{d\Sigma^{2d-2}}{\mu} \frac{du}{u(1-u)}, \ \text{where} \quad \mu u(1-u) \hpi^* f \in \A_{\resp{\phg}}^L (\partial_+ SM \times [0,1]),
    \end{align*}
    \resp{and where the index set collection $L$ satisfies} 
    \begin{align*}
	L(G_0) = L(G_1) &= \{(q + z, p), q\in \Nm, (z,p)\in K\}, \\
	L(G_2) &= \{(q + 2z, p), q\in \Nm, (z,p)\in K\}.
    \end{align*}
    \resp{The integrability condition to apply the pushforward theorem becomes $\inf L(G_2) > 0$. Assuming this, and since in the equality $I_0 f \frac{d\Sigma^{2d-2}}{\tau} = \hF_* \omega$, the left side is already in $b$-density form, and the pushforward theorem immediately implies that 
    \begin{align*}
	I_0 f \in \A_{\resp{\phg}}^{\hF_\# L} (\partial_+ SM), \quad \text{where} \quad \hF_\# L = \{(q + 2z, p),\ q\in \Nm,\ (z,p)\in K\}.
    \end{align*}}
    Note that $\inf L(G_2) = 1 + 2 \inf K$, which explains the integrability condition $\inf K > -1$. 
\end{proof}

Now consider the backprojection operator
\begin{Lemma}\label{lem:I0star} If $g\in \A_{\resp{\phg}}^K (\partial_+ SM)$ for some index set $K$, then $\adj  g \in \A_{\resp{\phg}}^{K''} (M)$, where 
	\begin{align}
		K'' = (\Nm_0\times \{0\}) \, \overline{\cup} \, \left\{ \left( \frac{1+z\resp{+\ell}}{2}, p \right),\ (z,p)\in K,\ \resp{\ell\in \Nm_0} \right\}.
		\label{eq:calE}
	\end{align}   
\end{Lemma}

\begin{proof}
The index set of $\tau\mu \, g$ is 
\begin{align*}
	K' = \{ (2 + z \resp{+\ell}, p),\ (z,p)\in K,\ \resp{\ell\in \Nm_0} \}.	
\end{align*}
Thus $\hF^* (\tau\mu \, g) \in \A_{\resp{\phg}}^{\hF^\# K'}(D)$, where 
\begin{align*}
	(\hF^\# K')(G_0) = (\hF^\# K')(G_1) = \Nm_0 \times \{0\}, \qquad\text{and}\qquad (\hF^\# K')(G_2) = K'.
\end{align*}
Next consider
\begin{align*}
    \omega = \hF^* (\tau\mu g)\ d\Sigma^2\ du = \mu u(1-u) \hF^* (\tau\mu g)\ \frac{d\Sigma^2}{\mu}\ \frac{du}{u(1-u)}.
\end{align*}
As a $b$-density, it has index set 
\begin{equation}
	L(G_0) = L(G_1) = \Nm \times \{0\}, \qquad L(G_2) = \{ (3 + z\resp{+\ell},p),\ (z,p)\in K,\ \resp{\ell\in \Nm_0} \}. 
    \label{indL}
\end{equation}
The pushforward theorem now gives that $\hpi_* \omega$ is \resp{a $b$-density of the form $h (\dM/\rho)$, for some function $h\in \A_{\resp{\phg}}^{\hpi_\# L} (M)$}, where
\begin{align*}
    L' = \hpi_\# L (\partial M) &= (\rho\circ \hpi)_\# L = \widetilde{E}(F_0) \cup \widetilde {E} (F_1),
\end{align*}
and where, for $i=0,1$, 
\begin{align*}
    \widetilde{E}(F_i) = \left\{ \left( \frac{z}{e_{\rho\circ\hpi}(G_i)}, p \right), (z,p)\in L(G_i)  \right\} \overline{\cup} \left\{  \left( \frac{z}{e_{\rho\circ\hpi}(G_2)}, p \right), (z,p)\in L(G_2)  \right\}. 
\end{align*}
Using \eqref{indL} and that $e_{\rho\circ\hpi}(G_0) = e_{\rho\circ\hpi}(G_1) = 1$, $e_{\rho \circ \hpi}(G_2) = 2$, we see that 
\begin{align*}
	\widetilde{E}(F_0) = \widetilde{E}(F_1) = L'  =  (\Nm\times \{0\}) \, \overline{\cup} \, \left\{  \left( \frac{3+z \resp{+\ell}}{2}, p \right),\ (z,p)\in K,\ \resp{\ell\in\Nm_0} \right\}
\end{align*}
Notice that there is no integrability condition here because $e_{\hpi}$ is never zero. 

Now by \eqref{eq:I0star}, \resp{we have the relation $h = \rho \adj g$}. Thus $L'$ is the index set of $\rho \adj  g$, and hence \eqref{eq:calE} is the index set of $\adj  g$.
\end{proof}

\resp{

\paragraph{Discussion: Lemmas \ref{lem:xray}-\ref{lem:I0star} vs Theorems \ref{thm:Calphamapping}-\ref{thm:I0starIDX}.}

We briefly address how our main results are sharpenings of Lemmas \ref{lem:xray}-\ref{lem:I0star}. 

The mapping properties of $I_0$ are somewhat straightforward in that when using the pushforward theorem, the geometric picture does not involve boundary hypersurface interaction resulting in the creation of additional log terms. In this sense, Lemma \ref{lem:xray} almost entirely captures what needs to be captured, with the expection that the index set $K'$ (in the statement) can be made smaller: it is a $C^\infty$-index set on $\partial_+ SM$ for which many terms vanish when expressed in certain boundary defining functions. For example, for smooth integrands, Lemma \ref{lem:xray} would predict at best that $I_0 (C^\infty(M))\subset \tau C^\infty(\partial_+ SM)$, while it is known that it lands into the strict subspace $C_{\alpha,-}^\infty(\partial_+ SM)$. The latter is generally recovered by introducing $\alpha$-bdf's, $C_\alpha^\infty$-index sets on $\partial_+ SM$ and the spaces $\A_{\phg,\alpha}^E (\partial_+ SM)$.

On the mapping properties of $\adj$, an illustration of the main shortcoming of Lemma \ref{lem:I0star} is that it fails to recover \eqref{eq:PU}: if $g\in C^\infty(\partial_+ SM) = \A^{\Nm_0\times \{0\}}_{\phg}(\partial_+ SM)$, then Lemma \ref{lem:I0star} concludes that 
\begin{align*}
    \adj  (C^\infty(\partial_+ SM)) \subset C^\infty(M) + \rho\log \rho\ C^\infty(M) + \rho^{1/2} C^\infty(M).
\end{align*}
Considering parity conditions (e.g. replacing $\A^{\Nm_0\times \{0\}}_{\phg}(\partial_+ SM)$ by $\A^{2\Nm_0\times \{0\}}_{\phg,\alpha}(\partial_+ SM)$), one then arrives at 
\begin{align*}
	\adj  (C_{\alpha,+}^\infty(\partial_+ SM)) \subset C^\infty(M) + \rho^{1/2} C^\infty(M),
\end{align*}
which still does not recover \eqref{eq:PU}. This is where one need a closer examination of the proof of the pushforward theorem, and the observation, in the proof of Theorem \ref{thm:I0starIDX}, that certain expected poles of the Mellin functionals actually do not arise because of some nonobvious cancellations.
}

\paragraph{Acknowledgements.} The authors thank Gabriel P. Paternain for helpful discussions\F{, Joey Zou for pointing out a mistake in an earlier statement of Theorem \ref{thm:I0starIDX}, and the anonymous referees whose comments greatly improved the exposition of the article}. F.M was partially supported by NSF-CAREER grant DMS-1943580.  R.M. thanks both Gunther Uhlmann and Andr\'as Vasy for many years of patient guidance on inverse problems. This work was also partially supported by NSF Institutes grant DMS-1440140 when both authors were in residence at MSRI during the Fall 2019 program on Microlocal Analysis.

\bibliographystyle{siam}

\end{document}